\numberwithin{equation}{section} \makeatletter
\renewcommand{\subsection}{\@startsection
{subsection}{2}{0mm}{\baselineskip}{-0.25cm}
{\normalfont\normalsize\bf}} \makeatother
\newtheorem{theorem}{Theorem}[section]
\newtheorem{lemma}[theorem]{Lemma}
\newtheorem{corollary}[theorem]{Corollary}
\newtheorem{definition}[theorem]{Definition}
\newtheorem{remark}[theorem]{Remark}
\newtheorem{proposition}[theorem]{Proposition}
\def \A {\mathcal A}
\def \F {\mathcal F}
\def \H {\mathcal H}
\def \C {\mathcal C}
\def \N {\mathcal N}
\def \L {\mathcal L}
\def \P {\mathbf P}
\def \R {\mathbb R}
\def \E {\mathcal E}
\def \I {{\mathbf 1}}
\def \bF {\mathbb F}
\def \bH {\mathbb H}
\def \bE {\mathbb E}
\newcommand{\ud}{\mathrm d}
\newcommand{\ds}{\displaystyle}
\newcommand{\esp}[2][\mathbb E] {#1\left[#2\right]}
\newcommand{\espp}[2][\mathbb E^{\mathbf P^*}] {#1\left[#2\right]}
\newcommand{\condesph}[2][\H_t]       {\mathbb E\left.\left[#2\right|#1\right]}
\newcommand{\condespHH}[2][\H_t]       {\mathbb E^{\mathbf P^*}\left.\left[#2\right|#1\right]}
\newcommand{\condesphoo}[2][\H_{0}]       {\mathbb E^{\mathbf P^*}\left.\left[#2\right|#1\right]}
\newcommand{\condesphTT}[2][\H_{T}] {\mathbb E^{\mathbf P^*} \left.\left[#2\right|#1\right]}
\newcommand{\doleans}[1] {\mathcal E\left(#1\right)}
\author[C.~Ceci]{Claudia  Ceci}
\author[K.~Colaneri]{Katia Colaneri}
\author[A.~Cretarola]{Alessandra Cretarola}
\begin{document}
\address{Claudia  Ceci, Department of Economics,
University ``G. D'Annunzio'' of Chieti-Pescara, Viale Pindaro, 42,
I-65127 Pescara, Italy.}\email{c.ceci@unich.it}

\address{Katia Colaneri, Department of Economics,
University ``G. D'Annunzio'' of Chieti-Pescara, Viale Pindaro, 42,
I-65127 Pescara, Italy.}\email{katia.colaneri@unich.it}

\address{Alessandra Cretarola, Department of Mathematics and Computer Science,
 University of Perugia, Via Vanvitelli, 1, I-06123 Perugia, Italy.}\email{alessandra.cretarola@unipg.it}

\title[LRM under restricted information on asset prices]{Local risk-minimization under restricted information on asset prices}

\date{}

\begin{abstract}
\begin{center}
In this paper we investigate the local risk-minimization approach for a semimartingale financial market where there are restrictions on the available information to
agents who can observe at least the asset prices.
We characterize the optimal strategy in terms of suitable decompositions of a given contingent claim, with respect to a filtration representing the information level, even in presence of jumps. Finally, we discuss some practical examples in a Markovian framework and
show that the computation of the optimal strategy leads to filtering problems under the real-world probability measure and under the minimal
martingale measure.
\end{center}
\end{abstract}

\maketitle

{\bf Keywords}: Local risk-minimization; partial information;  Markovian processes; filtering.

{\bf AMS MSC 2010}: Primary: 60J25; 60G35; 91B28; Secondary: 60J75; 60J60.

\section{Introduction}

In this paper we study the {\em local risk-minimization} approach (see e.g.~\cite{fs1991}, ~\cite{s93}
and~\cite{s01} for a deeper discussion on this issue)
 for a semimartingale market model where there are restrictions on the available information
to traders and discuss some Markovian models where we compute explicitly the optimal strategy even by means of filtering problems. \\
More precisely, we assume that in our model the agents have a limitative knowledge on the market, so that their choices cannot be based on the full
information flow described by the filtration $\bF:=\{\F_t, \ t \in [0,T]\}$, with $T$ denoting a fixed finite time horizon. The available information
level is basically given by a smaller filtration $\bH:=\{\H_t, \ t \in [0,T]\}$. However, since, in general, stock prices are publicly available, we assume that the agents can reasonably observe at least the asset prices.

In this market we consider a European-type contingent claim whose final payoff is given by an $\H_T$-measurable square-integrable random variable $\xi$
on the given probability space $(\Omega,\F,\P)$. The  goal is to study the hedging problem of the payoff $\xi$ via the local risk-minimization approach in the underlying incomplete market, which is driven by an $(\bF,\P)$-semimartingale $S$ representing the stock
price process and where there are restrictions on the available information to traders.

The quadratic hedging method of local risk-minimization extends the theory of risk-minimization introduced in \cite{fs86} and formulated when the price process is a local martingale under the real-word probability measure $\P$, to the semimartingale case. The local martingale case was largely developed both under complete and partial information. One of the pioneer papers in the restricted information setting is represented by \cite{s94}, where the optimal strategy is constructed via predictable dual projections. More recently, in \cite{ccr1}, the authors characterized the risk-minimizing hedging strategy via an orthogonal decomposition of the contingent claim, called the {\em Galtchouk-Kunita-Watanabe decomposition under restricted information}.

The local risk-minimization method under partial information has been investigated  for the first time in~\cite{ccr2}, where the authors, thanks to existence and uniqueness results for backward stochastic differential equations under partial information, characterize the optimal hedging strategy for an $\F_T$-measurable contingent claim $\xi$, via a suitable version of the F\"ollmer-Schweizer
decomposition working in the case of restricted information, by means of the new concept of weak orthogonality introduced in \cite{ccr1}. More precisely, they prove that the $\bH$-predictable integrand with respect to the stock price process in the F\"ollmer-Schweizer decomposition gives the $\bH$-locally risk minimizing strategy; nevertheless, they do not furnish any operational method to represent explicitly the optimal strategy.  Our contribution, in this context, is to provide a full description of the optimal strategy for an $\H_T$-measurable contingent claim, under the additional hypothesis that the information available to investors is, at least, given by the stock prices. This scenario is characterized by the following condition on filtrations:
\[
\F^S_t \subseteq \H_t \subseteq \F_t, \quad t \in [0,T],
\]
where $\F^S_t$ is the $\sigma$-field generated by the stock price process $S$ up to time $t$.\\
In this paper, the key point is that the risky asset price process $S$ satisfying the {\em structure condition} with respect to $\bF$, see \eqref{eq:SC},
turns out to be an $(\bH,\P)$-semimartingale in virtue of the condition above. Indeed, since the payoff of a given contingent claim is always supposed to be an $\H_T$-measurable random variable, this allows one to
reduce the hedging problem under partial information to an equivalent problem in the case of full information, as all involved processes turn out to be
$\bH$-adapted. We will see that $S$ also satisfies the structure condition with respect to $\bH$, see Proposition \ref{prop:N}, and then the optimal strategy can be characterized  by extending the results of \cite{cvv2010} to the partial information framework, see Proposition \ref{casoJ}. The Galtchouk-Kunita-Watanabe under restricted information, with respect to the {\em minimal martingale measure}, $\P^*$, see Definition \ref{def:MMM}, represents an essential tool to get the achievement.

We also pay attention to the relationship between the optimal strategy under complete information and that under restricted information. In Proposition \ref{prop:relazione_strategie} the result is stated under the assumption that the stock price process has continuous trajectories, and then generalized to the discontinuous case in Proposition \ref{casoJ}.

Finally, we consider some Markovian models affected by an unobservable stochastic factor.
We discuss three meaningful examples where we characterize the structure conditions of the underlying price process with respect to both $\bF$ and $\bH$ and compute the optimal strategy when the information flow coincides with the natural filtration for the stock price process.
In the first example, $S$ is a geometric diffusion process with drift depending on a correlated and unobservable stochastic factor $X$ whose dynamics
is a given by a diffusion process.
Then, we study the case where $S$ is a pure jump process whose local characteristics (jump-intensity and jump-size distribution) depend on an
unobservable stochastic factor $X$ given by a Markov jump-diffusion process having common jump times with $S$. This model fits well with high-frequency
data and with the possibility of catastrophic events. Indeed, this kind of events influences both the asset prices and the hidden state variable which
drives their dynamics.
Finally, the last example considers the more general case where the stock price $S$ is a jump-diffusion process and the stochastic factor $X$ is a
correlated Markov jump-diffusion process having common jump times with $S$.
In all these examples, the computation of the optimal value process leads to a filtering problem with respect to the minimal martingale measure $\P^*$ and the historical probability measure $\P$, in presence of jumps.
Filtering problems have been extensively investigated in literature. Results for the case of continuous partially observable systems can be found for instance in~\cite{K, Lis, KO}, the pure jump observations case is analyzed in~\cite{Br, KMM, FR2001, cg09, cg06, c06} and the mixed type observations case
is studied in~\cite{FR2010, fs2012, FSX, ce12, cco1, GrMi, cco2}.
Note that the optimal strategies under partial information for the above mentioned models require the knowledge  of the filter dynamics 
with
respect to $\P^*$, which provides the $\P^*$-conditional law of the stochastic factor $X$ given the information flow on asset prices. Consequently, we derive the filtering equations under $\P^*$ for the three models in Appendix \ref{appendix:a}, by extending the results proved in~\cite{cco1}.

The paper is organized as follows. In Section 2 we describe the financial market model and formulate the hedging problem under partial information according to the local risk-minimization approach. Section 3 is devoted to prove that the underlying price process satisfies the structure condition under the subfiltration $\bH$. The characterization of the optimal strategy, even in presence of jumps, can be found in Section 4. Some Markovian models are discussed in Section 5. Finally, the computation of the filter dynamics for the Markovian models and some proofs are gathered in Appendix.

\section{Hedging problem formulation under partial information}\label{sec:setting}

Let $(\Omega,\F,\P)$ be a probability space endowed with a filtration $\bF:=\{\F_t, \ t\in [0,T]\}$ that satisfies the usual conditions of
right-continuity and completeness, where $T > 0$ is a fixed and finite time horizon; furthermore, we assume that $\F=\F_T$. We consider a simple financial market model where we can find one riskless asset with (discounted) price $1$ and a risky asset whose (discounted) price $S$ is represented
by an $\R$-valued square-integrable c\`{a}dl\`{a}g $(\bF,\P)$-semimartingale satisfying the following structure condition (see e.g.~\cite{s01} for further details):
\begin{equation} \label{eq:SC}
S_t = S_0 + M_t + \int_0^t\alpha_u^\F  \ud \langle M \rangle_u,\quad t \in [0,T],
\end{equation}
where $S_0 \in L^2(\F_0,\P)$\footnote{The space $L^2(\F_0,\P)$ denotes the set of all $\F_0$-measurable random variables $H$ such that $\esp{|H|^2} = \int_\Omega |H|^2\ud {\P} < \infty$.}, $M=\{M_t,\ t \in [0,T]\}$ is an $\R$-valued square-integrable (c\`{a}dl\`{a}g) $(\bF,\P)$-martingale
starting at null, $\langle M\rangle=\{\langle M,M\rangle_t,\ t \in [0,T]\}$ denotes its $\bF$-predictable quadratic variation process and
$\alpha^\F=\{\alpha_t^\F, \ t \in [0,T]\}$ is an $\R$-valued $\bF$-predictable process such that $\int_0^T\left(\alpha_s^\F\right)^2 \ud \langle
M\rangle_s < \infty$ $\P$-a.s..

\begin{remark} \label{rem:structure}
It is quite natural to assume that $S$ is a semimartingale under the real-word probability measure $\P$. Indeed, this is implied by the existence of an equivalent martingale measure, and equivalently by the absence of arbitrage opportunities.
Moreover, according to the results proved in~\cite[page 24]{AS} and~\cite[Theorem 1]{ms95}, if in addition, $S$ has continuous trajectories or c\`adl\`ag paths and the following condition
holds:
$$
\esp{\sup_{t \in [0,T]}S_t^2} < \infty,
$$
then, $S$ satisfies the structure condition with respect to $\bF$ given in  \eqref{eq:SC}.
\end{remark}

Without further mention, all subsequently appearing quantities will be expressed in discounted units.
At any time $t \in [0,T]$, market participants can trade in order to reallocate their wealth. We assume that they have a limitative knowledge on the
market, then their choices cannot be based on the full information flow $\bF$. To describe this scenario, we consider the filtration $\bF^S:=\{\F^S_t,
\ t \in [0,T]\}$ generated by the risky asset price process $S$, i.e. $\F^S_t=\sigma\{S_u, \ 0 \leq  u \leq t \leq T\}$, and the filtration
$\bH:=\{\H_t, \ t \in [0,T]\}$, representing the available information to traders;
both filtrations are supposed to satisfy the usual hypotheses of completeness and right-continuity, and
since the information on asset prices is announced to the public, it is reasonable to assume that the stock price process $S$ is adapted to both
filtrations $\bF$ and $\bH$, that is
\begin{equation}\label{hp:filtration}
\F^S_t \subseteq \H_t \subseteq \F_t, \quad t \in [0,T].
\end{equation}
Condition \eqref{hp:filtration} implies that agents can observe at least the market prices of negotiated assets.

In this market we consider a European-type contingent claim whose final payoff is given by an $\H_T$-measurable random variable $\xi$ such
that  $\esp{|\xi|^2}<\infty$ (or equivalently, $\xi \in L^2(\H_T, \P)$).

Then, the goal is to study the hedging problem of the given contingent claim $\xi$ in the incomplete market driven by $S$ where there are restrictions
on the available information to traders,
via the local risk-minimization approach (see e.g.~\cite{fs1991},~\cite{s93}
and~\cite{s01}).

It is important to stress that the
risky asset price process $S$ turns out to be an $(\bH,\P)$-semimartingale in virtue of condition
\eqref{hp:filtration} on filtrations. Then it admits a semimartingale decomposition with respect to $\bH$,
i.e.
\begin{equation}\label{eq:Semi}
S_t= S_0+N_t+ R_t, \quad t \in [0,T],
\end{equation}
where $N=\{N_t,\ t \in [0,T]\}$ is  an $\R$-valued square-integrable $(\bH, \P)$-martingale with $N_0=0$ and $R=\{R_t,\ t \in [0,T]\}$ is an
$\R$-valued $\bH$-predictable process of finite variation with $R_0=0$. Moreover, since $R$ is $\bH$-predictable this decomposition is unique (see
e.g.~\cite[Chapter III, Theorem 34]{pp}) and will be called the {\em canonical $\bH$-decomposition} of $S$.

On the other hand, the payoff of a given contingent claim is always supposed to be an $\H_T$-measurable random
variable. We observe that all the processes involved are then $\bH$-adapted, and this allows to reduce the hedging problem under partial information to an equivalent one in the case of full information.

We now briefly recall the main concepts and results about the local risk-minimization approach (with respect to $\bH$).

Since we work with both the decompositions of $S$, in the sequel we refer to $M$ as the $\bF$-martingale part of $S$, and $N$ as the $\bH$-martingale part of $S$.

Firstly, we introduce the definition of (hedging) strategy and assume some minimal requirements to make it admissible.
\begin{definition}\label{thetaH-F}
The space $\Theta(\bH)$ (respectively $\Theta(\bF)$)
consists of all $\R$-valued $\bH$-predictable (respectively $\bF$-predictable)
processes $\theta=\{\theta_t,\ t \in [0,T]\}$ satisfying
the following integrability condition:
\begin{equation*}\label{admissible}
\esp{\int_0^T\theta_u^2 \ud \langle N \rangle_u+\left(\int_0^T|\theta_u \ud R_u| \right)^2}<\infty \quad\left( \mbox{resp. } \esp{\int_0^T\theta_u^2 \ud \langle M \rangle_u+\left(\int_0^T|\theta_u| |\alpha^\F_u| \ud \langle M \rangle_u\right)^2}<\infty\right).
\end{equation*}
\end{definition}

\begin{definition}
An $\bH$-{\em admissible strategy} is a pair $\psi=(\theta,\eta)$, where $\theta \in \Theta(\bH)$  and $\eta=\{\eta_t, \ t \in [0,T]\}$ is an
$\R$-valued $\bH$-adapted process
such that the value process
$V(\psi)=\{V_t(\psi), \ t \in [0,T]\}:=\theta S + \eta$ is right-continuous and  square-integrable, i.e. $V_t(\psi) \in L^2(\H_t,\P)$, for each $t
\in [0,T]$.
\end{definition}
Note that $\theta$ and $\eta$ describe the amount of wealth invested in the risky asset and in the riskless asset respectively.

For any $\bH$-admissible strategy $\psi$, we can define the associated {\em cost process} $C(\psi)=\{C_t(\psi), \ t \in [0,T] \}$ which is the
$\R$-valued $\bH$-adapted process given by
$$
C_t(\psi)=V_t(\psi)-\int_0^t \theta_u \ud S_u,
$$
for every $t \in [0,T]$.

In our framework the market is incomplete, then perfect replication of a given contingent claim by a self-financing $\bH$-admissible
strategy is not guaranteed.

However, even if $\bH$-admissible strategies $\psi$ with $V_T(\psi)=\xi$ will in general not be self-financing, it turns out that good $\bH$-admissible
strategies are still self-financing on average in the following sense.

\begin{definition}
An $\bH$-admissible strategy $\psi$ is called {\em mean-self-financing} if the associated cost process $C(\psi)$ is an $(\bH,\P)$-martingale.
\end{definition}

Similarly to \cite{s01}, we introduce the concept of {\it pseudo optimal strategy}.

\begin{definition}\label{def:optimalstrategy}
Let $\xi\in L^2(\H_T, \P)$ be a contingent claim. An $\bH$-admissible strategy $\psi$ such that $V_T(\psi)=\xi$ $\P-\mbox{a.s.}$ is called $\bH$-{\em
pseudo optimal} for $\xi$ if and only if $\psi$ is mean-self-financing and the $(\bH,\P)$-martingale $C(\psi)$ is  strongly orthogonal to the
$\bH$-martingale part, $N$, of $S$, see \eqref{eq:Semi}.
\end{definition}

We have skipped the original definition of {\it locally risk-minimizing strategy}, given in~\cite{s93}, since it is rather technical and delicate.   Moreover, in the one-dimensional case, under mild assumptions on the semimartingale $S$, locally risk minimizing and pseudo optimal strategies coincide, see~\cite[Theorem 3.3]{s01}. The advantage of working with pseudo-optimal strategies is that they can be characterized through an appropriate decomposition of the contingent claim $\xi$.

\begin{definition}
Let $\xi \in L^2(\H_T, \P)$ be the payoff of European-type contingent claim. We say that $\xi$ admits the F\"{o}llmer-Schweizer decomposition with respect to $S$ and $\bH$,  if there exists a random variable $U_0 \in L^2(\H_0, \P)$, a process $\beta^\H\in \Theta(\bH)$ and a square-integrable $(\bH,\P)$-martingale $A=\{A_t,\ t \in [0,T]\}$  with $A_0=0$
strongly orthogonal to the $\bH$-martingale part of $S$, $N$,  such that
\begin{equation}\label{eq:fsdecomposition}
\xi = U_0 + \int_0^T \beta^\H_t \ud S_t + A_T \quad \P-\mbox{a.s.}.
\end{equation}
\end{definition}

\begin{remark}
Some classes of sufficient conditions for the existence of the F\"{o}llmer-Schweizer decomposition are given for example in \cite{ms94, s95, ms95,cvv2010,ccr2}.
\end{remark}

The following result enables us to characterize the $\bH$-pseudo optimal strategy via the F\"{o}llmer-Schweizer decomposition.

\begin{proposition}\label{prop:strategy-FS}
A contingent claim $\xi \in L^2(\H_T, \P)$ admits a unique  $\bH$-pseudo optimal strategy $\psi^*=(\theta^*,\eta^*)$ with $V_T(\psi^*)=\xi$
$\P-\mbox{a.s.}$ if and only if decomposition \eqref{eq:fsdecomposition} holds. The strategy $\psi^*$ is explicitly given by
\[
\theta_t^*=\beta^\H_t, \quad t \in [0,T],
\]
with minimal cost
\[
C_t(\psi^*)= U_0+ A_t, \quad t \in [0,T];
\]
its value process is
\begin{equation*}\label{eq:optimalport}
V_t(\psi^*)=\condesph{\xi-\int_t^T \beta^\H_u \ud S_u}= U_0 + \int_0^t \beta^\H_u \ud S_u + A_t, \quad t \in [0,T],
\end{equation*}
so that $\eta_t^*=V_t(\psi^*)-\beta^\H_t S_t$, for every $t \in [0,T]$.
\end{proposition}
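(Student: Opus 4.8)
The plan is to prove the two implications separately and then settle uniqueness, essentially transcribing the classical Schweizer characterization of pseudo‑optimal strategies to the subfiltration $\bH$. This transcription is legitimate because $S$ is an $(\bH,\P)$-semimartingale with canonical decomposition \eqref{eq:Semi} and \emph{every} object entering the statement ($\xi$, $\beta^\H$, $A$, $V$, $C$) is $\bH$-adapted, so the computations are carried out verbatim with $\bH$, $N$ and $R$ in place of $\bF$, $M$ and $\int\alpha^\F\,\ud\langle M\rangle$.

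For sufficiency, assume \eqref{eq:fsdecomposition}. I would set $\theta^*:=\beta^\H$, define the candidate $V_t(\psi^*):=U_0+\int_0^t\beta^\H_u\,\ud S_u+A_t$ and $\eta^*_t:=V_t(\psi^*)-\beta^\H_t S_t$. Admissibility is immediate: $\beta^\H\in\Theta(\bH)$ by hypothesis, $\eta^*$ is $\bH$-adapted, and $V(\psi^*)$ is right-continuous and square-integrable for each $t$ because $U_0\in L^2(\H_0,\P)$, $A$ is a square-integrable $(\bH,\P)$-martingale, and $\int_0^T\beta^\H_u\,\ud S_u\in L^2(\P)$ (split $\ud S=\ud N+\ud R$ and use the two summands in the definition of $\Theta(\bH)$). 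Plugging into the definition of the cost process gives $C_t(\psi^*)=V_t(\psi^*)-\int_0^t\beta^\H_u\,\ud S_u=U_0+A_t$, which is an $(\bH,\P)$-martingale, so $\psi^*$ is mean-self-financing, and whose martingale part $A$ is strongly orthogonal to $N$ by hypothesis; together with $V_T(\psi^*)=\xi$ this shows $\psi^*$ is $\bH$-pseudo optimal. The value-process identity follows by writing $\xi-\int_t^T\beta^\H_u\,\ud S_u=U_0+\int_0^t\beta^\H_u\,\ud S_u+A_T$, noting the first two summands are $\H_t$-measurable and using $\condesph{A_T}=A_t$.

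For necessity, given an $\bH$-pseudo optimal $\psi=(\theta,\eta)$ with $V_T(\psi)=\xi$, I would simply read off the decomposition by setting $U_0:=C_0(\psi)=V_0(\psi)\in L^2(\H_0,\P)$, $\beta^\H:=\theta\in\Theta(\bH)$ and $A_t:=C_t(\psi)-C_0(\psi)$. Mean-self-financing makes $A$ an $(\bH,\P)$-martingale null at $0$; it is square-integrable since $V(\psi)$ is and $\int_0^{\cdot}\theta_u\,\ud S_u\in L^2(\P)$ (again splitting along \eqref{eq:Semi}); and it is strongly orthogonal to $N$ by pseudo optimality. Rearranging $C_T(\psi)=V_T(\psi)-\int_0^T\theta_u\,\ud S_u$ yields exactly \eqref{eq:fsdecomposition}.

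Uniqueness is where the real work lies, and I expect it to be the main obstacle. Subtracting two decompositions \eqref{eq:fsdecomposition} of $\xi$ gives $0=\bar U_0+\int_0^T\bar\beta_u\,\ud S_u+\bar A_T$ with $\bar U_0\in L^2(\H_0,\P)$, $\bar\beta\in\Theta(\bH)$ and $\bar A$ a square-integrable $(\bH,\P)$-martingale null at $0$ strongly orthogonal to $N$. I would handle this by passing to the minimal martingale measure $\P^*$ of Definition \ref{def:MMM}: under $\P^*$ the process $\int_0^{\cdot}\bar\beta_u\,\ud S_u$ is a local martingale, while by the defining property of $\P^*$ the $\P$-martingale $\bar A$, being strongly orthogonal to $N$, remains a $\P^*$-(local) martingale; hence $\bar K_t:=\bar U_0+\int_0^t\bar\beta_u\,\ud S_u+\bar A_t$ is a $\P^*$-(local) martingale with $\bar K_T=0$, forcing $\bar K\equiv 0$ and in particular $\bar U_0=0$. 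Then $\int_0^{\cdot}\bar\beta_u\,\ud S_u=-\bar A$, and comparing the canonical $\bH$-decompositions of the two sides (the right-hand side being a martingale) kills the finite-variation part $\int_0^{\cdot}\bar\beta_u\,\ud R_u$ and leaves $\int_0^{\cdot}\bar\beta_u\,\ud N_u=-\bar A$; taking the bracket with $N$ and using $\langle\bar A,N\rangle=0$ gives $\int_0^{\cdot}\bar\beta_u\,\ud\langle N\rangle_u=0$, whence $\bar\beta=0$ $\ud\langle N\rangle\times\ud\P$-a.e. and $\bar A\equiv0$. (Alternatively, the structure condition of $S$ under $\bH$ of Proposition \ref{prop:N} can be used in place of $\P^*$.) With the F\"ollmer--Schweizer decomposition unique, $\theta^*=\beta^\H$ is unique, $C(\psi^*)=U_0+A$ is determined, and $\eta^*_t=V_t(\psi^*)-\beta^\H_t S_t$ follows, completing the proof.
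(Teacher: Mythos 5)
Your sufficiency and necessity arguments are fine and are exactly the classical ones: the paper itself gives no written proof but simply cites \cite[Proposition 3.4]{s01}, whose argument transfers verbatim to $\bH$ because $S$ is an $(\bH,\P)$-semimartingale with canonical decomposition \eqref{eq:Semi} and all the objects $\xi,\beta^\H,A,V,C$ are $\bH$-adapted; your transcription of that part is correct.

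The genuine gap is in the uniqueness step. You invoke the minimal martingale measure $\P^*$ of Definition \ref{def:MMM} to conclude that $\bar A$ stays a martingale after the change of measure, but that definition only says that square-integrable \emph{$(\bF,\P)$-martingales strongly orthogonal to $M$} remain $(\bF,\P^*)$-martingales. Your $\bar A$ is an $(\bH,\P)$-martingale strongly orthogonal to $N$; it is in general \emph{not} an $\bF$-martingale, and strong orthogonality to $N$ under $\bH$ does not give strong orthogonality to $M$ under $\bF$, so Definition \ref{def:MMM} simply does not apply to it. The measure with the property you need is $\P^0$ of \eqref{eq:MMMpartial2} (the MMM with respect to $\bH$), and the paper shows only for \emph{continuous} $S$ that $\P^0=\P^*|_{\bH}$ (Lemma \ref{lemma:MMM}); when $S$ jumps the two differ in general, which is precisely why Section 4 needs Proposition \ref{casoJ}. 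Moreover, neither $\P^*$ nor $\P^0$ is guaranteed to exist under the hypotheses of Proposition \ref{prop:strategy-FS}: their construction requires the positivity and exponential integrability conditions stated only in Section 4 (e.g.\ \eqref{eq:square_integrability_L} and its $\bH$-analogue), and even granting existence you only obtain that $\bar K$ is a $\P^0$-\emph{local} martingale; a local martingale with $\bar K_T=0$ need not vanish identically, so you would additionally have to argue true martingality (square-integrability of $\int\bar\beta\,\ud S$ and $\bar A$ under the new measure), which again is not available from the standing assumptions. Your parenthetical fallback ``use Proposition \ref{prop:N} instead'' is not a proof: Proposition \ref{prop:N} needs \eqref{eq:int_cond}, and the structure condition under $\bH$ by itself does not yield uniqueness of the F\"ollmer--Schweizer decomposition without a further argument (boundedness-type conditions on the mean-variance tradeoff, as in \cite{ms94,ccr2}, or a correctly justified minimal-martingale-density argument). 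So either quote the uniqueness as part of the cited result \cite[Proposition 3.4]{s01}, as the paper does, or supply these missing hypotheses and the true-martingale step explicitly.
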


\begin{proof}
For the proof see \cite[Proposition 3.4]{s01}.
\end{proof}

In view of~\cite[Theorem 3.3]{s01} and Proposition \ref{prop:strategy-FS}, finding the F\"ollmer-Schweizer decomposition of a given contingent claim
$\xi$ is important because it allows one to obtain the $\bH$-pseudo optimal strategy. The problem is then how to compute
such a decomposition. If the stock price process $S$ is continuous, the optimal strategy can be calculated
by switching to a particular martingale measure $\P^*$, the so-called {\em minimal martingale measure} (in short MMM), and computing the
Galtchouk-Kunita-Watanabe decomposition of $\xi$ with respect to $S$ under $\P^*$. However, concerning the more general case, that is, when $S$ is
only
c\`{a}dl\`{a}g, there are few results in literature, even under complete information, see e.g.~\cite{cvv2010}. A semimartingale market model under
restricted information has been investigated only in~\cite{ccr2}, as far as we are aware.

\section{Structure condition of the stock price $S$ with respect to $\bH$}\label{sec:SC}

In the sequel we will use the notation ${}^o X$ (respectively, ${}^p X$) to indicate the optional (respectively, predictable) projection with respect
to $\bH$ under $\P$ of a given process $X=\{X_t,\ t \in [0,T]\}$ satisfying $\esp{|X_t|} < \infty$ for every $t \in [0,T]$, defined as the unique
$\bH$-optional (respectively, $\bH$-predictable) process such that  ${}^o X_\tau = \esp{X_{\tau} | \H_{\tau}}$ $\P$-a.s. on $\{\tau < \infty\}$ for
every $\bH$-stopping time $\tau$ (respectively, ${}^p X_\tau = \esp{X_{\tau}| \H_{\tau^-}}$ $\P$-a.s. on $\{\tau < \infty\}$ for every
$\bH$-predictable stopping time $\tau$).

We also denote by $B^{p,\bH}$ the $(\bH,\P)$-predictable dual projection of an $\R$-valued c\`adl\`ag $\bF$-adapted process $B=\{B_t,\ t \in [0,T]\}$ of integrable
variation, defined as
the unique $\R$-valued $\bH$-predictable process $B^{p,\bH}=\{B_t^{p,\bH},\ t \in [0,T]\}$ of integrable variation, such that
\begin{equation*}\label{eq:hpredvar}
\esp{\int_0^T\varphi_t\ud B_t^{p,\bH}}=\esp{\int_0^T\varphi_t\ud B_t},
\end{equation*}
for every $\R$-valued $\bH$-predictable (bounded) process $\varphi=\{\varphi_t, \ t \in [0,T]\}$. See e.g. Section 4.1 of~\cite{ccr1} for further details.

When the risky asset price process $S$ has continuous trajectories, the classical decomposition of $S$ with respect to the filtration $\bH$ has the
form (see, e.g.~\cite{kxy2006} or~\cite{ms2010}):
\begin{equation*}\label{mania}
S_t= S_0+N_t+ \int_0^t {}^p \alpha_u^\F \ud \langle N \rangle_u, \quad t \in [0,T],
\end{equation*}
where the process $N=\{N_t,\ t \in [0,T]\}$ given by
 $$
N_t = M_t + \int_0^t [\alpha_u^\F - {}^p\alpha_u^\F] \ud \langle M \rangle_u, \quad t \in [0,T],
$$
 is an $(\bH, \P)$-martingale. Recall that $M$ denotes the martingale part of $S$ under $\bF$, see \eqref{eq:SC}. Since the quadratic variation process
 $[S]$ of $S$ is defined by
 $$
 [S]_t=S_t^2-2\int_0^t S_{u^-}\ud S_u,\quad t \in [0,T],
 $$
 it turns out to be $\bF^S$-adapted, while in general the predictable quadratic variation $\langle S\rangle$ of $S$ depends on the choice of the
 filtration. Clearly, if $S$ is continuous, we have that
${}^{\bH} \langle N \rangle = {}^{\bF} \langle M \rangle$  and these sharp brackets are $\bF^S$-predictable. Here, the notations ${}^{\bH} \langle
\cdot \rangle$ and ${}^{\bF} \langle \cdot\rangle$
just stress the fact that the predictable quadratic variations are computed with respect to the filtrations $\bH$ and $\bF$, respectively. However, if
it does not create ambiguity, we will always write $\langle M\rangle = {}^{\bF} \langle M \rangle$ and $\langle N\rangle = {}^{\bH} \langle N \rangle$
to simplify the notation.

In presence of jumps these relations are no longer true, since $ {}^{\bF} \langle M^d\rangle  \neq   {}^{\bH}\langle N^d\rangle $, where $M^d$ and
$N^d$ denote the discontinuous parts of the martingales $M$ and $N$, respectively. To compute explicitly the predictable quadratic variations,  we
introduce the integer-valued random measure associated to the jumps of $S$:
\begin{equation*}
m(\ud t,\ud z) = \sum_{s: \Delta S_s \neq 0} \delta_{(s, \Delta S_s)}(\ud t,\ud z),
\end{equation*}
where $\delta_a$ denotes the Dirac measure at  point $a$.

Denote by $\nu^\bF(\ud t, \ud z)$ and $\nu^\bH(\ud t,\ud z)$ the predictable dual projections of $m(\ud t,\ud z)$ under $\P$ with respect to $\bF$ and
$\bH$ respectively (we refer the reader to~\cite{j79} or~\cite{js} for the definition). Then, by~\cite[Chapter II, Corollary 2.38]{js} we get the
following representations of the martingales $M$ and $N$:
$$
M_t = M^c_t + \int_0^t\int_{\R} z(m(\ud t,\ud z) - \nu^\bF(\ud t,\ud z)), \quad t \in [0,T],
$$
$$
N_t = N^c_t + \int_0^t\int_{\R} z(m(\ud t,\ud z) - \nu^\bH(\ud t,\ud z)), \quad t \in [0,T],
$$
where $M^c$ and $N^c$ denote the continuous parts of $M$ and $N$ respectively, and we have $\langle M^c\rangle= \langle N^c\rangle$ as just observed
before.
Hence
$$
\langle M\rangle_t = \langle M^c\rangle_t +  \int_0^t\int_{\R} z^2\nu^\bF(\ud t,\ud z), \quad t \in [0,T],
$$
$$
\langle N\rangle _t =  \langle M^c\rangle _t + \int_0^t\int_{\R} z^2\nu^\bH(\ud t,\ud z), \quad t \in [0,T].
$$
Now, we are in the position to derive the structure condition of $S$ with respect to the filtration $\bH$.
\begin{proposition} \label{prop:N}
Assume that \begin{equation} \label{eq:int_cond}
\esp{\int_0^T\left(\alpha_u^\F\right)^2 \ud \langle M \rangle_u}<\infty.
\end{equation}
Then the $(\bF,\P)$-semimartingale $S$ satisfies the structure condition with respect to $\bH$, i.e.
\begin{equation*} \label{eq:SC_H}
S_t = S_0 + N_t + \int_0^t \alpha_s^\H \ud  \langle N \rangle_s, \quad t \in [0,T],
\end{equation*}
where $\langle N \rangle$ coincides with the $(\bH,\P)$-predictable dual projection of $ \langle M \rangle$, that is, $\langle N \rangle = \langle
M \rangle^{p,\bH}$ and the $\R$-valued $\bH$-predictable process $\alpha^\H=\{\alpha_t^\H,\ t \in [0,T]\}$ given by
\begin{equation*} \label{sc1}
\alpha_t^\H:=\frac{\ud \left(\int_0^t \alpha_u^\F \ud \langle M \rangle_u \right)^{p,\bH}}{\ud   \langle M \rangle_t^{p,\bH}}, \quad t \in [0,T],
 \end{equation*}
satisfies an integrability condition analogous to \eqref{eq:int_cond}.

\end{proposition}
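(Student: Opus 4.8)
The plan is to reduce everything to the Dol\'eans measure of $\langle M\rangle$ and the elementary calculus of $(\bH,\P)$-predictable dual projections. Write $B_t:=\int_0^t\alpha_u^\F\,\ud\langle M\rangle_u$, so that \eqref{eq:SC} reads $S=S_0+M+B$, and let $\mu$ denote the finite measure on the $\bF$-predictable $\sigma$-field given by $\mu(\Gamma):=\esp{\int_0^T\I_\Gamma(\omega,u)\,\ud\langle M\rangle_u}$ (finite since $\esp{\langle M\rangle_T}=\esp{M_T^2}<\infty$). Two applications of the Cauchy--Schwarz inequality, first in $\ud\langle M\rangle$ and then in $\P$, together with \eqref{eq:int_cond}, show that $B$ has $\P$-integrable variation, so its $(\bH,\P)$-predictable dual projection $B^{p,\bH}$ is well defined. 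I would then \emph{define} $\alpha^\H$ to be the $\mu$-conditional expectation of $\alpha^\F$ given the $\bH$-predictable $\sigma$-field; this is a $\bH$-predictable process, and since $\alpha^\F\in L^2(\mu)$ by \eqref{eq:int_cond} and conditional expectation is an $L^2(\mu)$-contraction,
$$\esp{\int_0^T(\alpha_u^\H)^2\,\ud\langle M\rangle_u}=\int(\alpha^\H)^2\,\ud\mu\le\int(\alpha^\F)^2\,\ud\mu=\esp{\int_0^T(\alpha_u^\F)^2\,\ud\langle M\rangle_u}<\infty,$$
which will become the required analogue of \eqref{eq:int_cond} once $\langle M\rangle$ is replaced by $\langle N\rangle$.

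Next I would prove that $\langle N\rangle$ equals the $(\bH,\P)$-predictable dual projection $\langle M\rangle^{p,\bH}$. With $C_t:=\sum_{s\le t}(\Delta S_s)^2=\int_0^t\int_\R z^2\,m(\ud u,\ud z)$, which is $\bF^S$-adapted, increasing, and of $\P$-integrable variation (its $(\bF,\P)$-predictable dual projection being $C^{p,\bF}_t=\int_0^t\int_\R z^2\,\nu^\bF(\ud u,\ud z)\le\langle M\rangle_t$), the representations recorded just above the statement read $\langle M\rangle=\langle M^c\rangle+C^{p,\bF}$ and $\langle N\rangle=\langle M^c\rangle+C^{p,\bH}$. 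Since $\langle M^c\rangle=\langle N^c\rangle$ is the continuous part of the pathwise bracket $[S]$, it is $\bF^S$-adapted and continuous, hence $\bH$-predictable, so $(\langle M^c\rangle)^{p,\bH}=\langle M^c\rangle$; and since every bounded $\bH$-predictable process is $\bF$-predictable, the tower property of dual predictable projections gives $(C^{p,\bF})^{p,\bH}=C^{p,\bH}$. Summing, $\langle M\rangle^{p,\bH}=\langle N\rangle$. In particular the Dol\'eans measure of $\langle N\rangle$ agrees with $\mu$ on the $\bH$-predictable $\sigma$-field, so the displayed bound becomes $\esp{\int_0^T(\alpha_u^\H)^2\,\ud\langle N\rangle_u}<\infty$.

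Then I would identify the canonical $\bH$-decomposition of $S$. For any bounded $\bH$-predictable $\varphi$, using in turn the defining property of $B^{p,\bH}$, the definition of $B$, the definition of $\alpha^\H$ (applicable since $\varphi$ is measurable for the $\bH$-predictable $\sigma$-field), and the defining property of $\langle M\rangle^{p,\bH}$ (extended by monotone convergence to $\mu$-integrable integrands), one obtains
$$\esp{\int_0^T\varphi_u\,\ud B_u^{p,\bH}}=\esp{\int_0^T\varphi_u\alpha_u^\F\,\ud\langle M\rangle_u}=\esp{\int_0^T\varphi_u\alpha_u^\H\,\ud\langle M\rangle_u}=\esp{\int_0^T\varphi_u\alpha_u^\H\,\ud\langle M\rangle_u^{p,\bH}}.$$
By uniqueness of $\bH$-predictable finite-variation processes with prescribed integrals against bounded $\bH$-predictable processes, $B_t^{p,\bH}=\int_0^t\alpha_u^\H\,\ud\langle M\rangle_u^{p,\bH}=\int_0^t\alpha_u^\H\,\ud\langle N\rangle_u$; this simultaneously exhibits $\alpha^\H$ as the predictable Radon--Nikodym density $\ud B^{p,\bH}/\ud\langle M\rangle^{p,\bH}$, that is, as the process displayed in the statement. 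Finally, $Y:=S-S_0-B^{p,\bH}=M+(B-B^{p,\bH})$ is $\bH$-adapted and integrable (since $S$ is square-integrable and $B^{p,\bH}$ has $\P$-integrable variation), and for bounded $\bH$-predictable $\varphi$ one has $\esp{\int_0^T\varphi_u\,\ud M_u}=0$ (because $\varphi$ is then $\bF$-predictable and $M$ is a square-integrable $(\bF,\P)$-martingale) and $\esp{\int_0^T\varphi_u\,\ud(B-B^{p,\bH})_u}=0$; taking $\varphi=\I_A\I_{(s,t]}$ with $A\in\H_s$ shows that $Y$ is an $(\bH,\P)$-martingale. Hence $S=S_0+Y+B^{p,\bH}$ is a decomposition of $S$ into an $(\bH,\P)$-local martingale null at $0$ and an $\bH$-predictable finite-variation process null at $0$; comparing with the canonical $\bH$-decomposition \eqref{eq:Semi} and invoking its uniqueness, $N=Y$ (so $N$ is square-integrable) and $R_t=B_t^{p,\bH}=\int_0^t\alpha_u^\H\,\ud\langle N\rangle_u$. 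This is precisely the structure condition of $S$ with respect to $\bH$.

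The step I expect to be the main obstacle is the existence of a genuinely $\bH$-predictable version of $\alpha^\H$ together with the fact that it represents the predictable Radon--Nikodym density $\ud B^{p,\bH}/\ud\langle M\rangle^{p,\bH}$: this rests on the theory of dual predictable projections and of predictable Radon--Nikodym derivatives. The rest is routine bookkeeping with dual projections and monotone-class extensions once the integrability furnished by \eqref{eq:int_cond} is available. The standing hypothesis enters through $\bH\subseteq\bF$ (bounded $\bH$-predictable processes are $\bF$-predictable, which is what makes $\esp{\int_0^T\varphi_u\,\ud M_u}=0$ and the tower property $(C^{p,\bF})^{p,\bH}=C^{p,\bH}$ hold) and through $\bF^S\subseteq\bH$ (which forces $\langle M^c\rangle$ to be $\bH$-predictable and hence gives $\langle N\rangle=\langle M\rangle^{p,\bH}$).
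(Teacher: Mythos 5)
Your proof is correct, and it reaches the conclusion by the same overall skeleton as the paper — identify the $\bH$-predictable finite-variation part of $S$ with $\bigl(\int_0^\cdot\alpha^\F_u\,\ud\langle M\rangle_u\bigr)^{p,\bH}$, show $\langle N\rangle=\langle M\rangle^{p,\bH}$ through the jump-measure representations of $\langle M\rangle$ and $\langle N\rangle$, and then extract the density and its integrability — but you make two of the key ingredients self-contained where the paper cites them. The paper gets $R=\bigl(\int_0^\cdot\alpha^\F_u\,\ud\langle M\rangle_u\bigr)^{p,\bH}$ from Jacod's Proposition 9.24 and the existence of the $\bH$-predictable density $\alpha^\H$ from Proposition 4.9 of the companion paper on the Galtchouk--Kunita--Watanabe decomposition under restricted information; you instead construct $\alpha^\H$ directly as the conditional expectation of $\alpha^\F$ under the Dol\'eans measure of $\langle M\rangle$ given the $\bH$-predictable $\sigma$-field, and you verify by hand (testing with $\I_A\I_{(s,t]}$, $A\in\H_s$) that $S-S_0-B^{p,\bH}$ is an $(\bH,\P)$-martingale, so that uniqueness of the canonical $\bH$-decomposition identifies $N$ and $R$. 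This buys you a fully elementary argument and gives the integrability bound for free via the $L^2(\mu)$-contraction property, whereas the paper reaches the same bound by the test-function choice $\varphi=\alpha^\H$ plus Cauchy--Schwarz; the identification of $\langle N\rangle$ with $\langle M\rangle^{p,\bH}$ is essentially identical in both arguments (your ``tower property'' of dual projections is the paper's computation with bounded $\bH$-predictable $\varphi$). The step you flag as the main obstacle is in fact harmless: any version of the conditional expectation with respect to the $\bH$-predictable $\sigma$-field is by definition an $\bH$-predictable process, and your display of test-function identities is precisely what shows it coincides with the Radon--Nikodym density $\ud B^{p,\bH}/\ud\langle M\rangle^{p,\bH}$ appearing in the statement.
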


\begin{proof}
By~\cite[Proposition 9.24]{j79} we get that the process $R=\{R_t,\ t \in [0,T]\}$ in decomposition  (\ref{eq:Semi}) is given by
\begin{equation*} \label{eq:R}
R_t  = \left( \int_0^t \alpha_s^\F \ud \langle M \rangle_s \right)^{p,\bH}, \quad t \in [0,T].
\end{equation*}
Now, by applying~\cite[Proposition 4.9]{ccr1} we deduce that $R$ is absolutely continuous with respect to
$\langle M \rangle^{p,\bH}$ and as a consequence, it can be written as
$R_t  = \int_0^t \alpha_s^\H \ud  \langle M \rangle^{p,\bH}_s$, $t \in [0,T]$, where  the process $\alpha^\H$  is the Radon-Nikodym derivative of  $\left(\int \alpha_t^\F \ud \langle M \rangle_t \right)^{p,\bH}$ with respect to $\langle M \rangle^{p,\bH}$.

To prove that $\langle N \rangle = \langle M \rangle^{p,\bH}$ we notice that $\langle M^c\rangle= \langle N^c\rangle$, which is
$\bH$-predictable, and then we only need to show that  $\langle N^d \rangle = \langle M^d \rangle^{p,\bH}$, that is
$$
\int_0^t \int_{\R} z^2\nu^\bH(\ud s,\ud z) = \left( \int_0^t \int_{\R} z^2 \nu^\bF(\ud s,\ud z)\right)^{p,\bH}, \quad t \in [0,T].
$$
To this aim, we observe that by definitions of $\nu^\bF(\ud t,\ud z)$ and $\nu^\bH(\ud t,\ud z)$, for every $\bH$-predictable (bounded) process
$\varphi=\{\varphi_t, \ t \in [0,T]\}$ we have
\begin{align*}
\esp{\int_0^T  \varphi_s \int_{\R} z^2\nu^\bF(\ud s,\ud z)} & = \esp{\int_0^T \int_{\R} \varphi_s  z^2 m (\ud s,\ud z)} = \esp{\int_0^T \int_{\R}
\varphi_s  z^2\nu^\bH (\ud s,\ud z)} \\
& = \esp{\int_0^T  \varphi_s \int_{\R} z^2\nu^\bH(\ud s,\ud z)}.
\end{align*}

Finally, it remains to check that $\alpha^\H$ satisfies the required integrability condition, i.e.
$$
\esp{\int_0^T \left(\alpha_u^\H\right)^2 \ud \langle N \rangle_u} < \infty.
$$
Since for every $\bH$-predictable process $\varphi$ we have
$$
\esp{\int_0^T \varphi_u\alpha_u^\H \ud \langle M\rangle_u}=
\esp{\int_0^T \varphi_u\alpha_u^\H \ud \langle N\rangle_u}=
\esp{\int_0^T \varphi_u (\alpha_u^\F \ud \langle M\rangle_u)^{p,\bH}}=\esp{\int_0^T \varphi_u \alpha_u^\F \ud \langle M\rangle_u},
$$
by choosing $\varphi=\alpha^\H$ and applying the Cauchy-Schwarz inequality, we have
$$
\esp{\int_0^T\left(\alpha_u^\H\right)^2 \ud \langle N \rangle_u} \leq \esp{\int_0^T\left(\alpha_u^\F\right)^2 \ud \langle M \rangle_u} < \infty.
$$
\end{proof}
We conclude the section by considering the case where the $\bF$-predictable quadratic variation of the $(\bF,\P)$-martingale $M$ is absolutely
continuous with respect to the Lebesgue measure,
that is, $\ \langle M\rangle _t = \int_0^t a_s \ \ud s$, $t \in [0,T]$, for some $\R$-valued $\bF$-predictable process $a=\{a_t,\ t \in [0,T]\}$.\\
In such a case, $\ds\langle N \rangle_t = \langle M \rangle_t^{p,\bH} =  \int_0^t  {}^p a_s \ \ud s$ and $\left( \int_0^t \alpha_s^\F \ \ud \langle M \rangle_s
\right)^{p,\bH} =
\int_0^t  {}^p (\alpha_s^\F \ a_s) \ud s$ for each $t \in [0,T]$; hence
$$
\alpha_t^\H =\frac{  {}^p (\alpha_t^\F a_t) }{ {}^p a_t}\I_{\{ {}^p a_t \neq 0 \}}, \quad t \in [0,T].
$$
Moreover
\begin{equation}\label{eq:N}
N_t = {}^oM_t + \leftidx{^o}{\left(\int_0^t\alpha_u^\F a_u \ud u\right)}- \int_0^t {}^p({\alpha_u^\F a_u}) \ud u
\end{equation}
for every $t \in [0,T]$.

Indeed, consider the structure condition of $S$ with respect to $\bF$ given in \eqref{eq:SC} and project it onto $\H_t$, i.e.
\begin{equation} \label{S:proj}
 S_t = {}^oS_t = S_0 + {}^oM_t + \leftidx{^o}
{\left(\int_0^t  \alpha_u^\F a_u \ud u\right)}, \quad t \in [0,T].
\end{equation}

On the other hand, since $R_t  =  \int_0^t {}^p( \alpha_u^\F \ a_u)\ud u$, we get
$$
S_t = S_0 + N_t  + \int_0^t  {}^p( \alpha_u^\F a_u)  \ud u, \quad t \in [0,T],
$$
hence  \eqref{eq:N} is proved.
\begin{remark}
In particular, the $(\bH, \P)$-martingale $N=\{N_t,\ t \in [0,T]\}$ with $N_0=0$ can be decomposed as the sum of three $(\bH, \P)$-martingales, see
equation \eqref{eq:3_mg} below. Indeed, \eqref{S:proj} can be written as
$$
S_t=S_0 + {}^o M_t + \int_0^t \leftidx{^o}{\left(\alpha_u^\F a_u\right)} \ud u + m_t, \quad t \in [0,T],
$$
where the process $m=\{m_t,\ t \in [0,T]\}$ is given by
\[
m_t:=\leftidx{^o}{\left(\int_0^t\alpha_u^\F a_u \ud u\right)}- \int_0^t \leftidx{^o}{(\alpha_u^\F a_u)} \ud u, \quad t \in [0,T].
\]
Note that the process ${}^oM=\{{}^oM_t,\ t \in [0,T]\}$, as well as the process $m$, is an $(\bH, \P)$-martingale (see e.g.~\cite[Chapter
IV, Theorem T1]{Br} for the proof).
Set now
$$
\widetilde m_t :=\int_0^t {}^o({\alpha_u^\F a_u}) \ud u - \int_0^t  {}^p ({\alpha_u^\F a_u}) \ud u, \quad t \in [0,T].
$$
Then, the process $\widetilde m=\{\widetilde m_t, \ t \in [0,T]\}$
turns out to be an $(\bH, \P)$-martingale.
Hence
\begin{equation} \label{eq:3_mg}
N_t={}^oM_t + m_t+\widetilde m_t,
\end{equation}
for every $t \in [0,T]$.
\end{remark}

We will see how to compute explicitly the structure conditions of $S$ with respect to $\bF$ and $\bH$ in the models discussed in Section \ref{sec:markovian_models}.

\section{The $\bH$-pseudo optimal strategy}\label{sec:optimal_strategy}

In the case of full information, when the semimartingale $S$ has continuous trajectories, it is proved in~\cite[Theorem 3.5]{s01}, that there exists
the $\bH$-pseudo optimal
strategy and that can be obtained via the Galtchouk-Kunita-Watanabe decomposition of the contingent claim $\xi$ with respect to $S$ under the MMM
$\P^*$. This is essentially due to the fact that, in the case of continuous trajectories, the MMM preserves orthogonality, and then the
Galtchouk-Kunita-Watanabe decomposition of the contingent claim under the MMM $\P^*$ provides the F\"{o}llmer-Schweizer decomposition of the contingent
claim under the historical probability measure $\P$. Obviously, this does not work if the $(\bF,\P)$-semimartingale $S$ exhibits jumps.
However, also in presence of jumps, the MMM and the Galtchouk-Kunita-Watanabe decomposition of the contingent claim $\xi$ still represent the key tools to compute the $\bH$-pseudo optimal strategy, see e.g.~\cite{cvv2010} for the semimartingale market model under under full information.\\
Here, we provide a similar criterion to characterize the pseudo optimal strategy in the partial information case, see equation \eqref{eq:betaH}.\\
Furthermore, in the next section we will show some Markovian models affected by an unobservable stochastic factor, where this computation leads to filtering problems under the historical probability measure $\P$ and the MMM $\P^*$.\\
For reader's convenience, firstly we recall the definition of MMM with respect to the filtration $\bF$.
\begin{definition} \label{def:MMM}
An equivalent martingale measure $\P^*$ for $S$ with square-integrable density $\ds \frac{\ud \P^*}{\ud \P}$ is called
 {\em minimal martingale measure} (for $S$) if $\P^*=\P$ on $\F_0$ and if every square-integrable $(\bF, \P)$-martingale, strongly orthogonal to the
 $\bF$-martingale part of $S$, $M$, is also an $(\bF, \P^*)$-martingale.
\end{definition}

If we assume that
  \begin{equation*}\label{eq:hpAS2}
  1-\alpha_t^\F \Delta M_t>0 \quad \P-\mbox{a.s.} \quad \forall \ t \in [0,T],
  \end{equation*}
and
 \begin{equation}\label{eq:square_integrability_L}
\esp{\exp\left\{\frac{1}{2}\int_0^T \left(\alpha_t^\F\right)^2 \ud \langle M^c\rangle_t+ \int_0^T \left(\alpha_t^\F\right)^2 \ud \langle M^d \rangle_t
\right\}} < \infty,
  \end{equation}
 where $M^c$ and $M^d$ denote the continuous and the discontinuous parts of the $(\bF,\P)$-martingale $M$ respectively and $\alpha^\F$ is given in
 \eqref{eq:SC},
  then by the Ansel-Stricker Theorem (see~\cite{AS}) there exists the MMM $\P^*$ for $S$, which is defined thanks to the density process $L=\{L_t,\ t
  \in [0,T]\}$ given by
\begin{equation}\label{eq:MMMpartial}
L_t:=\left.\frac{\ud \P^*}{ \ud \P}\right|_{\F_t}=\doleans{-\int \alpha_u^\F \ud M_u}_t,\quad t \in [0,T],
\end{equation}
where the notation $\mathcal{E}(Y)$ refers to the Dol\'{e}ans-Dade exponential of an $(\bF,\P)$-semimartingale $Y$.

We observe that condition \eqref{eq:square_integrability_L} implies that the nonnegative $(\bF, \P)$-local martingale $L$ is indeed a square-integrable
$(\bF, \P)$-martingale, see e.g.~\cite{ps2008}, and also that \eqref{eq:int_cond} holds true.

Now, we assume that
  \begin{equation*}\label{eq:hpASH1}
  1-\alpha_t^\H \Delta N_t>0 \quad \P-\mbox{a.s.} \quad \forall \ t \in [0,T],
  \end{equation*}
and
 \begin{equation*}\label{eq:square_integrability_L0}
\esp{\exp\left\{\frac{1}{2}\int_0^T \left(\alpha_t^\H\right)^2 \ud \langle N^c\rangle_t+ \int_0^T \left(\alpha_t^\H\right)^2 \ud \langle N^d \rangle_t
\right\}} < \infty,
  \end{equation*}
 where, as usual $N^c$ and $N^d$ denote the continuous and the discontinuous parts of the $(\bH,\P)$-martingale $N$ respectively.
 Then similarly to before, we define $\P^0$ as the probability measure on $(\H_T, \Omega)$  such that

\begin{equation}\label{eq:MMMpartial2}
L^0_t:=\left.\frac{\ud \P^0}{\ud \P}\right|_{\H_t}=\doleans{-\int \alpha^\H_u \ud N_u}_t, \quad \forall \ t \in [0,T].
\end{equation}
We notice that $L^0$ is a square-integrable $(\bH, \P)$-martingale and $\P^0$, equivalent to $\P$ over the filtration $\bH$, provides the {\em MMM with respect to the filtration $\bH$}.

We are now in the position to state the following result.

\begin{proposition}\label{prop:strategia_ottima}
Let $\xi \in L^2(\H_T, \P)$ be a contingent claim that admits the F\"{o}llmer-Schweizer decomposition with respect to $\bH$ and $S$, and $\psi^*=(\theta^*,\eta^*)$  be the associated $\bH$-pseudo optimal strategy.
Then, the optimal value process $V(\psi^*)=\{V_t(\psi^*),\ t \in [0,T]\}$ is given by
\begin{equation*} \label{eq:portfolio_value}
V_t(\psi^*)= \bE^{\P^0}\left[\xi | \H_t\right], \quad t \in [0,T],
\end{equation*}
where $\bE^{\P^0}\left[\cdot | \H_t\right]$ denotes the conditional expectation with respect to $\H_t$ computed under $ \P^0$; moreover, the first
component $\theta^*$ of the $\bH$-pseudo optimal strategy $\psi^*$
 is given by
\begin{equation} \label{eq:optimal_strategy}
\theta_t^*=\frac{ \ud  {}^\bH \langle V^m(\psi^*), N\rangle_t}{ \ud   {}^\bH\langle N \rangle_t}, \quad  t \in [0,T] ,
\end{equation}
where $V^m(\psi^*)$ is the $(\bH,\P)$-martingale part of the process $V(\psi^*)$ and here the sharp brackets are computed under $\P$.
\end{proposition}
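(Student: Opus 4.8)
The plan is to combine the F\"ollmer--Schweizer characterization of the $\bH$-pseudo optimal strategy (Proposition~\ref{prop:strategy-FS}) with the structure condition of $S$ with respect to $\bH$ (Proposition~\ref{prop:N}), and then to exploit the change of measure from $\P$ to the MMM $\P^0$ defined in~\eqref{eq:MMMpartial2}, adapting to the subfiltration $\bH$ the scheme used in~\cite{cvv2010} in the full-information case. By assumption $\xi$ admits the F\"ollmer--Schweizer decomposition~\eqref{eq:fsdecomposition} with respect to $\bH$ and $S$, say $\xi=U_0+\int_0^T\beta^\H_u\,\ud S_u+A_T$ with $U_0\in L^2(\H_0,\P)$, $\beta^\H\in\Theta(\bH)$ and $A$ a square-integrable $(\bH,\P)$-martingale with $A_0=0$ and strongly orthogonal to $N$; by Proposition~\ref{prop:strategy-FS} we already know that $\theta^*_t=\beta^\H_t$ and that $V_t(\psi^*)=U_0+\int_0^t\beta^\H_u\,\ud S_u+A_t$, with $V_T(\psi^*)=\xi$, $\P$-a.s.

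First I would establish the formula for the optimal value process. By Proposition~\ref{prop:N} we may write $\ud S_u=\ud N_u+\alpha^\H_u\,\ud\langle N\rangle_u$; since $L^0=\doleans{-\int\alpha^\H_u\,\ud N_u}$ solves $\ud L^0_t=-L^0_{t-}\alpha^\H_t\,\ud N_t$, Girsanov's theorem shows that $N+\int\alpha^\H\,\ud\langle N\rangle=S-S_0$ is an $(\bH,\P^0)$-local martingale, i.e. $S$ is an $(\bH,\P^0)$-local martingale (this also re-proves that $\P^0$ is an equivalent martingale measure for $S$ with respect to $\bH$). Since $A$ is strongly orthogonal to $N$ and both are locally square-integrable $(\bH,\P)$-martingales, $\langle A,N\rangle=0$, hence $\langle A,L^0\rangle=-\int L^0_{u-}\alpha^\H_u\,\ud\langle A,N\rangle_u=0$, and Girsanov's theorem gives that $A$ is still an $(\bH,\P^0)$-local martingale. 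Note that this step does not require the MMM to preserve strong orthogonality in general (which fails in the presence of jumps): it only uses the orthogonality of the particular $A$ appearing in~\eqref{eq:fsdecomposition}. It follows that $V(\psi^*)=U_0+\int_0^\cdot\beta^\H_u\,\ud S_u+A$ is an $(\bH,\P^0)$-local martingale; upgrading it to a true $(\bH,\P^0)$-martingale — using that $\beta^\H\in\Theta(\bH)$ together with the square-integrability of the density $L^0$, exactly as in~\cite{cvv2010} — we obtain $V_t(\psi^*)=\bE^{\P^0}[V_T(\psi^*)|\H_t]=\bE^{\P^0}[\xi|\H_t]$ for every $t\in[0,T]$.

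Next I would derive~\eqref{eq:optimal_strategy}. Using again $\ud S_u=\ud N_u+\alpha^\H_u\,\ud\langle N\rangle_u$ we have $V_t(\psi^*)=U_0+(\int_0^t\beta^\H_u\,\ud N_u+A_t)+\int_0^t\beta^\H_u\alpha^\H_u\,\ud\langle N\rangle_u$, where $\int_0^\cdot\beta^\H_u\alpha^\H_u\,\ud\langle N\rangle_u$ is $\bH$-predictable and of finite variation, $\int_0^\cdot\beta^\H_u\,\ud N_u$ is a square-integrable $(\bH,\P)$-martingale (because $\beta^\H\in\Theta(\bH)$), and $A$ is an $(\bH,\P)$-martingale. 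By uniqueness of the canonical $\bH$-decomposition of $V(\psi^*)$, its $(\bH,\P)$-martingale part $V^m(\psi^*)$ then satisfies $\ud V^m_t(\psi^*)=\beta^\H_t\,\ud N_t+\ud A_t$. Taking the $\bH$-predictable quadratic covariation with $N$ under $\P$ and using once more that $\langle A,N\rangle=0$, we get ${}^{\bH}\langle V^m(\psi^*),N\rangle_t=\int_0^t\beta^\H_u\,\ud{}^{\bH}\langle N\rangle_u$, so that $\frac{\ud{}^{\bH}\langle V^m(\psi^*),N\rangle_t}{\ud{}^{\bH}\langle N\rangle_t}=\beta^\H_t=\theta^*_t$, which is~\eqref{eq:optimal_strategy}.

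The routine parts are the identification of the $(\bH,\P)$-martingale part of $V(\psi^*)$ and the covariation computation. The main obstacle is twofold: on the one hand, the careful justification, in the presence of jumps, of the Girsanov drift computation for $A$ — in particular that $\langle A,L^0\rangle$ is well defined and vanishes, under the standing integrability assumptions on $\alpha^\H$ and the square-integrability of $L^0$; on the other hand, the passage from the $(\bH,\P^0)$-local martingale property of $V(\psi^*)$ to the genuine martingale property, which is precisely where the definition of $\Theta(\bH)$ and the square-integrability of $L^0$ enter, along the lines of the full-information treatment in~\cite{cvv2010}.
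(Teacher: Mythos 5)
Your proposal is correct and follows essentially the same route as the paper: use Proposition \ref{prop:strategy-FS} to write $V(\psi^*)=U_0+\int_0^\cdot\beta^\H_u\,\ud S_u+A$, show via the density $L^0$ that both $\int\beta^\H\,\ud S$ and $A$ remain $(\bH,\P^0)$-martingales (the paper invokes the defining property of the MMM with respect to $\bH$ and the argument of Theorem 3.14 in the F\"ollmer--Schweizer paper, where you re-derive the orthogonality step via Girsanov), giving $V_t(\psi^*)=\bE^{\P^0}[\xi|\H_t]$, and then identify $V^m(\psi^*)=U_0+\int_0^\cdot\beta^\H_u\,\ud N_u+A$ and compute ${}^{\bH}\langle V^m(\psi^*),N\rangle$ using $\langle A,N\rangle=0$. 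The only cosmetic difference is that the paper also records the preliminary integrability check $\xi\in L^1(\H_T,\P^0)$ via Cauchy--Schwarz, which your appeal to the square-integrability of $L^0$ covers implicitly.
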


\begin{proof}
Since $L^0$ given in \eqref{eq:MMMpartial2} is a square-integrable $(\bH, \P)$-martingale, by Cauchy-Schwarz inequality we get that
\[
 \bE^{ \P^0}[|\xi |] = \esp{|\xi|  L^0_T} \leq \esp{\xi^2}^{1/2} \esp{(L^0_T)^2}^{1/2}<\infty,
\]
which means that $\xi \in L^1(\H_T, \P^0)$. \\
Consider the F\"{o}llmer-Schweizer decomposition of $\xi$ with respect to $S$ and $\bH$, see \eqref{eq:fsdecomposition}, and let
$\psi^*=(\theta^*,\eta^*)$ be the $\bH$-pseudo optimal strategy. Then, by Proposition \ref{prop:strategy-FS} we get $\theta^*=\beta^\H$ and the optimal
value process $V(\psi^*)$ satisfies
\begin{equation*}\label{eq:decomp_portfolio}
V_t(\psi^*)=  U_0 + \int_0^t \beta^\H_u \ud S_u + A_t, \quad t \in [0,T].
\end{equation*}
Observe that $\int \beta^\H_t \ud S_t$ is an $(\bH, \P^0)$-martingale since $\int \beta^\H_t \ud N_t$ and $ L$ are $(\bH, \P)$-martingales (see the
proof of Theorem 3.14 in~\cite{fs1991}) and $A$ turns out to be an $(\bH, \P^0)$-martingale  by definition of the MMM with respect to the filtration $\bH$. Then, the optimal value process
$V(\psi^*)$ is an $(\bH,\P^0)$-martingale, and as a consequence it can be written as
\[
V_t(\psi^*)= \bE^{\P^0}\left[V_T(\psi^*) | \H_t\right] =  \bE^{\P^0}\left[\xi | \H_t\right], \quad t \in [0,T].
\]
Finally, to compute the $\bH$-pseudo optimal strategy we consider the $(\bH, \P)$-martingale part of the process $V(\psi^*)$ given by
\[
V_t^m(\psi^*)=  U_0 + \int_0^t \beta^\H_u \ud N_u + A_t, \quad t \in [0,T].
\]
Then, taking the predictable quadratic covariation
with respect to the $\bH$-martingale part $N$ of $S$ computed under $\P$ into account, we get that
\[
\ud {}^ \bH \langle V^m(\psi^*), N \rangle_t =  \beta^\H_t \ud  {}^ \bH \langle N \rangle_t, \quad t \in [0,T],
\]
since $A$ is strongly orthogonal to $N$  under $\P$. Then, we obtain equation \eqref{eq:optimal_strategy}.
\end{proof}

When the stock price process $S$ has continuous trajectories, the optimal value process can be characterized in terms of the MMM $\P^*$ with respect to the filtration $\bF$ as proved in Corollary \ref{cor:strategia_ottima} below. We start with a useful lemma.

\begin{lemma}\label{lemma:MMM}
Assume that $S$ has continuous trajectories. Then the MMM $\P^0$ with respect to the filtration $\bH$ coincides with the restriction on the filtration $\bH$ of the MMM $\P^*$ with respect to the filtration $\bF$.
\end{lemma}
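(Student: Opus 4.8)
The plan is to prove that the restriction $\widetilde{\P}$ of $\P^*$ to $\H_T$ satisfies the three properties characterizing the minimal martingale measure relative to $\bH$, and then to invoke uniqueness. First I would identify the $\bH$-density process of $\widetilde{\P}$: since $\ud\P^*/\ud\P|_{\F_T}=L_T$ with $L$ given by \eqref{eq:MMMpartial} an $(\bF,\P)$-martingale, one has $\ud\widetilde{\P}/\ud\P|_{\H_T}=\condesph[\H_T]{L_T}$ and, by the tower property, the corresponding $\bH$-density process is $\widehat{L}_t:={}^{o}L_t=\condesph{L_t}$; it is strictly positive, $\widehat{L}_0=1$ because $\P^*=\P$ on $\F_0\supseteq\H_0$, and square-integrable by conditional Jensen's inequality together with \eqref{eq:square_integrability_L}. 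Moreover, since $S$ is $\bH$-adapted and an $(\bF,\P^*)$-martingale, the tower property gives that $S$ is an $(\bH,\P^*)$-martingale, so $\widetilde{\P}$ is an equivalent (square-integrable) martingale measure for $S$ with respect to $\bH$ coinciding with $\P$ on $\H_0$. As the minimal martingale measure with respect to $\bH$ exists and equals $\P^0$ (see \eqref{eq:MMMpartial2}), by its uniqueness it then only remains to verify the minimality condition of Definition~\ref{def:MMM} at the level of $\bH$: every square-integrable $(\bH,\P)$-martingale $\Phi$ strongly orthogonal to $N$ is also an $(\bH,\widetilde{\P})$-martingale, equivalently $\Phi\,\widehat{L}$ is an $(\bH,\P)$-martingale.

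To get at this I would first determine $\widehat{L}$ through Girsanov's theorem for the change $\P\to\widetilde{\P}$ in the filtration $\bH$. Applied to the $(\bH,\P)$-martingale $N$, it gives that $N-\int_0^{\cdot}\widehat{L}_{u-}^{-1}\,\ud\,{}^\bH\langle N,\widehat{L}\rangle_u$ is an $(\bH,\widetilde{\P})$-local martingale; comparing with the canonical $\bH$-decomposition $S_t=S_0+N_t+\int_0^t\alpha_u^\H\,\ud\langle N\rangle_u$ of Proposition~\ref{prop:N} and using that $S-S_0$ is an $(\bH,\widetilde{\P})$-martingale, the process $\int_0^{\cdot}\alpha_u^\H\,\ud\langle N\rangle_u+\int_0^{\cdot}\widehat{L}_{u-}^{-1}\,\ud\,{}^\bH\langle N,\widehat{L}\rangle_u$ is a predictable finite-variation $(\bH,\widetilde{\P})$-local martingale starting from $0$, hence identically zero; thus $\ud\,{}^\bH\langle N,\widehat{L}\rangle_u=-\widehat{L}_{u-}\,\alpha_u^\H\,\ud\langle N\rangle_u$. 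Writing the Galtchouk-Kunita-Watanabe decomposition of $\widehat{L}$ with respect to $N$ under $(\bH,\P)$ as $\widehat{L}_t=1-\int_0^t\widehat{L}_{u-}\alpha_u^\H\,\ud N_u+O_t$, with $O$ a square-integrable $(\bH,\P)$-martingale, $O_0=0$, strongly orthogonal to $N$, the task reduces to showing $O\equiv0$: this forces $\widehat{L}=\doleans{-\int\alpha_u^\H\,\ud N_u}=L^0$, whence $\widetilde{\P}=\P^0$.

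The step $O\equiv 0$ is the one I expect to be the real obstacle. Purely $\bH$-level reasoning does not pin down $\widehat{L}$ — there are many equivalent martingale measures for $S$ relative to $\bH$ — so the continuity of $S$ and the minimality of $\P^*$ at the $\bF$-level (i.e.\ that $L=\doleans{-\int\alpha_u^\F\,\ud M_u}$ is a stochastic integral with respect to $M$, with no component strongly orthogonal to $M$) must be used essentially. A short computation already reduces the problem: since $O$ is a square-integrable $(\bH,\P)$-martingale with $O_0=0$ and $O\perp N$, using the decomposition of $\widehat{L}$ together with the $\H_T$-measurability of $O_T$ and $\ud\P^*/\ud\P|_{\F_T}=L_T$, one gets $\esp{{}^\bH\langle O\rangle_T}=\esp{O_T^2}=\esp{O_T\,\widehat{L}_T}=\esp{O_T\,L_T}=\espp{O_T}$, so it suffices to prove $\espp{O_T}=0$. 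To do this I would lift $O_T$ to the $(\bF,\P)$-martingale $\overline{O}_t:=\condespf{O_T}$ (so ${}^{o}\overline{O}=O$), take its Galtchouk-Kunita-Watanabe decomposition $\overline{O}=\overline{O}_0+\int\rho_u\,\ud M_u+\overline{O}^\perp$ with respect to $M$, observe that continuity of $S$ forces $\langle\overline{O}^\perp,M\rangle^\bF=0$ and that $\P^*$ preserves $\bF$-orthogonality, so $\overline{O}^\perp$ is an $(\bF,\P^*)$-martingale with $\espp{\overline{O}^\perp_T}=0$, and then use $M_u=S_u-S_0-\int_0^u\alpha_s^\F\,\ud\langle M\rangle_s$ under $\P^*$ together with the relation $\langle N\rangle=\langle M\rangle^{p,\bH}$ of Proposition~\ref{prop:N} to rewrite $\espp{O_T}$ in terms of the $N$-component of $\widehat{L}$ and conclude that it vanishes. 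The genuinely delicate points — where I would expect most of the work to lie — are the interchange of the $\bH$-optional projection with stochastic integration and the careful bookkeeping of the projected predictable covariations, which is precisely where the continuity of $S$ (hence of $M$, $L$ and $N$) is indispensable.
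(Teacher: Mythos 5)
Your route is genuinely different from the paper's: the paper does not touch density processes at all, but compares the $(\bF,\P^*)$-, $(\bH,\P^*)$- and $(\bH,\P^0)$-predictable characteristics of the continuous semimartingale $S$ and concludes by the uniqueness statement of Jacod--Shiryaev (Chapter III, Corollary 4.31), while you work with $\widehat L_t=\condesph{L_t}$ and a Galtchouk--Kunita--Watanabe decomposition with respect to $N$. A good part of your reduction is correct: $\widehat L$ is indeed the $\bH$-density process of the restriction of $\P^*$, $S$ is an $(\bH,\P^*)$-martingale, the Girsanov identification $\ud\,{}^\bH\langle N,\widehat L\rangle_t=-\widehat L_{t-}\alpha^\H_t\,\ud\langle N\rangle_t$ is right, and so is the identity $\esp{O_T^2}=\esp{O_T\widehat L_T}=\espp{O_T}$. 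But the argument stops exactly where the content of the lemma lies. In the last step, after writing $\overline O=\overline O_0+\int\rho_u\,\ud M_u+\overline O^{\perp}$, the terms $\overline O_0$ and $\overline O^{\perp}$ are disposed of correctly, but nothing is offered for $\espp{\int_0^T\rho_u\,\ud M_u}$: under $\P^*$ the martingale is $S$, not $M$, so this expectation equals $-\espp{\int_0^T\rho_u\alpha^\F_u\,\ud\langle M\rangle_u}$ up to the $\P^*$-integral $\int\rho_u\,\ud S_u$, and the instruction to ``rewrite $\espp{O_T}$ in terms of the $N$-component of $\widehat L$ and conclude that it vanishes'' is precisely the claim to be proved, not an argument.

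Moreover, this gap cannot be closed by bookkeeping of projections alone, because your proof uses nothing about $\bH$ beyond $\F^S_t\subseteq\H_t\subseteq\F_t$, and at that level of generality the orthogonal part $O$ need not vanish. Take $\ud S_t=X\,\ud t+\ud W_t$ with $X=\pm1$ equally likely and independent of the Brownian motion $W$, $\F_t=\sigma(X)\vee\F^W_t$, and let $\H_t=\F^S_t$ for $t<t_0$ and $\H_t=\F^S_t\vee\sigma(X)$ for $t\ge t_0$, with $0<t_0<T$. Here $L_t=\exp\left\{-X(S_t-S_0)+t/2\right\}$, so $\widehat L={}^oL$ has a genuine jump at $t_0$ (under $\P$ the conditional law of $X$ given $\F^S_{t_0}$ is nondegenerate, while for $t\ge t_0$ one has $\widehat L_t=L_t$), whereas $N$ is continuous and hence $L^0=\doleans{-\int\alpha^\H_u\,\ud N_u}$ is continuous; consequently $O\not\equiv 0$ and the restriction of $\P^*$ to $\H_T$ differs from $\P^0$ (under $\P^*$ the process $S$ is an $\bF$-Brownian motion independent of $X$, while under $\P^0$ the conditional law of $X$ given $\F^S_{t_0}$ is the same as under $\P$). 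So the decisive step ``$O\equiv0$'' requires an extra ingredient tying $\H_T$ to $S$ --- in the paper this role is played by the representation/uniqueness input behind the cited Jacod--Shiryaev corollary, and in the applications of Section 5 by the assumption $\H_t=\F^S_t$ --- and your proposal contains no substitute for it; the obstacle you yourself flag as the real difficulty is exactly where that ingredient would have to enter.
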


\begin{proof}
The proof is postponed to Appendix \ref{appendix:b}.
\end{proof}

\begin{corollary}\label{cor:strategia_ottima}
Assume that $S$ has continuous trajectories ad let $\xi \in L^2(\H_T, \P)$ be a contingent claim that admits the F\"{o}llmer-Schweizer decomposition with respect to $\bH$ and $S$, and $\psi^*=(\theta^*,\eta^*)$  be the associated $\bH$-pseudo optimal strategy.
Then, the optimal value process $V(\psi^*)=\{V_t(\psi^*),\ t \in [0,T]\}$ is given by
\begin{equation*} \label{eq:portfolio_value_cont}
V_t(\psi^*)= \bE^{\P^*}\left[\xi | \H_t\right], \quad t \in [0,T],
\end{equation*}
where $\bE^{\P^*}\left[\cdot | \H_t\right]$ denotes the conditional expectation with respect to $\H_t$ computed under $ \P^*$; moreover, the first
component $\theta^*$ of the $\bH$-pseudo optimal strategy $\psi^*$
 is given by
\begin{equation} \label{eq:optimal_strategy_cont}
\theta_t^*=\frac{ \ud  {}^\bH \langle V(\psi^*), S\rangle_t}{ \ud   {}^\bH\langle S \rangle_t}, \quad  t \in [0,T] ,
\end{equation}
where the sharp brackets are computed under $\P$.
\end{corollary}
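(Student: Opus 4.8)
The plan is to reduce the statement to Proposition \ref{prop:strategia_ottima} and Lemma \ref{lemma:MMM}, the only new ingredient being that continuity of $S$ makes its canonical $\bH$-decomposition well behaved. First I would settle the value-process formula: Lemma \ref{lemma:MMM} says that the MMM $\P^0$ relative to $\bH$ is the restriction to $\H_T$ of the MMM $\P^*$ relative to $\bF$, and since $\xi$ is $\H_T$-measurable this gives $\bE^{\P^0}[\xi\mid\H_t]=\bE^{\P^*}[\xi\mid\H_t]$ for every $t$; combined with the value-process formula of Proposition \ref{prop:strategia_ottima} this yields $V_t(\psi^*)=\bE^{\P^*}[\xi\mid\H_t]$.

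Next I would upgrade the strategy formula $\theta^*_t=\ud\,{}^\bH\langle V^m(\psi^*),N\rangle_t/\ud\,{}^\bH\langle N\rangle_t$ of Proposition \ref{prop:strategia_ottima} to \eqref{eq:optimal_strategy_cont} by replacing $V^m(\psi^*)$ with $V(\psi^*)$ and $N$ with $S$ in both sharp brackets. By Proposition \ref{prop:N}, for continuous $S$ the canonical $\bH$-decomposition reads $S=S_0+N+R$ with $N$ a continuous $(\bH,\P)$-martingale and $R=\int_0^\cdot\alpha^\H_u\,\ud\langle N\rangle_u$ continuous and of finite variation, so that ${}^\bH\langle S\rangle={}^\bH\langle N\rangle$. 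Moreover $V(\psi^*)=U_0+\int_0^\cdot\beta^\H_u\,\ud S_u+A=V^m(\psi^*)+\int_0^\cdot\beta^\H_u\,\ud R_u$, where the extra term is continuous of finite variation and $A$ is strongly orthogonal to $N$ by the very definition of the F\"ollmer-Schweizer decomposition; since the quadratic covariation of any semimartingale with a continuous finite-variation process is zero, this yields $\langle V(\psi^*),S\rangle=\langle V^m(\psi^*),N\rangle$, hence ${}^\bH\langle V(\psi^*),S\rangle={}^\bH\langle V^m(\psi^*),N\rangle$ after taking $\bH$-predictable dual projections. Substituting the two identities into the formula of Proposition \ref{prop:strategia_ottima} gives \eqref{eq:optimal_strategy_cont}.

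The step I expect to require the most care --- though it is elementary --- is exactly this continuity bookkeeping for the covariations: one must keep track of which processes are continuous (namely $S$, $N$, $R$ and $\int\beta^\H\,\ud R$) and invoke strong orthogonality of $A$ and $N$; no continuity of $A$ or of $V(\psi^*)$ is used, so the corollary remains valid when $\bH$ supports jumps invisible to $S$. Everything else --- square-integrability of $L^0$ giving $\xi\in L^1(\H_T,\P^*)$, $\bH$-adaptedness of all processes, and well-posedness of the Radon--Nikodym derivatives, which holds because $\xi$ admits the F\"ollmer-Schweizer decomposition with respect to $\bH$ and $S$ --- is routine and already contained in Proposition \ref{prop:strategia_ottima}.
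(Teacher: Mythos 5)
Your proposal is correct and follows the paper's own route: reduce to Proposition \ref{prop:strategia_ottima} and Lemma \ref{lemma:MMM}, then use continuity of the finite variation part of $S$ to identify ${}^\bH\langle S\rangle$ with ${}^\bH\langle N\rangle$ and ${}^\bH\langle V(\psi^*),S\rangle$ with ${}^\bH\langle V^m(\psi^*),N\rangle$; you merely spell out the bracket bookkeeping that the paper states in one line. (The appeal to strong orthogonality of $A$ and $N$ is not needed for this identification step itself --- it is already consumed inside Proposition \ref{prop:strategia_ottima} --- but this is harmless.)
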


\begin{proof}
The proof follows by Proposition \ref{prop:strategia_ottima} observing that, in virtue of Lemma \ref{lemma:MMM}, the optimal value process
$V(\psi^*)$  can be written as
\[
V_t(\psi^*)= \bE^{\P^0}\left[\xi | \H_t\right]=  \bE^{\P^*}\left[\xi | \H_t\right], \quad t \in [0,T].
\]
Finally, since the finite variation part of $S$ is continuous we get that $\ud  {}^ \bH \langle N \rangle=\ud  {}^ \bH \langle S \rangle$ and $\ud {}^ \bH \langle V^m(\psi^*), N \rangle=\ud {}^ \bH \langle V(\psi^*), S \rangle$, which leads to \eqref{eq:optimal_strategy_cont}.
\end{proof}

Clearly Corollary \ref{cor:strategia_ottima} furnishes a characterization of the $\bH$-pseudo-optimal strategy $\beta^\H$ in terms of the MMM $\P^*$ with respect to $\bF$ that holds when $S$ has continuous trajectories. When $S$ exhibits jumps it is not possible to provide an analogous characterization of the optimal value process. This is essentially due to the fact that in general the MMM $\P^0$ with respect to the filtration $\bH$ does not coincide with the restriction of $\P^*$ over $\bH$. Then, to compute explicitly the $\bH$-pseudo-optimal strategy we follow the approach suggested by \cite{cvv2010} in the full information context.

Assume that $\xi$ admits the F\"{o}llmer-Schweizer decomposition of $\xi$ with respect to $S$ and $\bF$,  i.e.
\begin{equation}\label{eq:fsdecompositionF}
\xi = \widetilde U_0 + \int_0^T \beta_t^\F \ud S_t + \widetilde A_T \quad \P-\mbox{a.s.},
\end{equation}
where $U_0 \in L^2(\F_0, \P)$, $\beta^\F \in \Theta(\bF)$ and $\widetilde A=\{\widetilde A_t,\ t \in [0,T]\}$ is a square-integrable
$(\bF,\P)$-martingale with $\widetilde A_0=0$ strongly orthogonal to the $\bF$-martingale part $M$ of $S$ under $\P$.

By applying Proposition \ref{prop:strategy-FS} with the choice $\bH=\bF$, we know that $\beta^\F$ provides the pseudo-optimal strategy under full information.

In the sequel we provide a characterization of the $\bH$-pseudo-optimal strategy $\beta^\H$ and discuss the relationship between $\beta^\H$ and $\beta^\F$.

Denote by $\Theta(\bF,\P^*)$ ($\Theta(\bH,\P^*)$, respectively)
the set of all $\R$-valued $\bF$-predictable (respectively, $\bH$-predictable)
processes $\delta=\{\delta_t,\ t \in [0,T]\}$ satisfying
the following integrability condition:
\begin{equation*}
\espp{\int_0^T\delta_u^2 \ud \langle S\rangle_u}<\infty.
\end{equation*}

In the rest of the section we assume $\xi$ to be square-integrable with respect to  $\P^*$.

Let us observe that since $S$ is a $\P^*$-martingale with respect to both the filtrations $\bF$ and $\bH$, the random variable $\xi$ admits
Galtchouk-Kunita-Watanabe decomposition with respect to $S$ and both the filtrations $\bF$ and $\bH$ under $\P^*$, i.e.
\begin{equation}\label{GKW1}
\xi =  \widetilde U_0 + \int_0^T\widetilde \beta_u^\F \ud S_u + \widetilde G_T \quad \P^*-\mbox{a.s.},
\end{equation}

\begin{equation}\label{GKW2}
\xi =  U_0 + \int_0^T \widetilde \beta^\H_u \ud S_u +G_T \quad \P^*-\mbox{a.s.},
\end{equation}
where $\widetilde U_0 \in L^2(\F_0, \P^*)$, $U_0 \in L^2(\H_0, \P^*)$, $\widetilde \beta^\F\in \Theta(\bF,\P^*)$, $\widetilde \beta^\H\in
\Theta(\bH,\P^*)$, $\widetilde G=\{\widetilde G_t, \ t \in [0,T]\}$ and $G=\{G_t, \ t \in [0,T]\}$  are square-integrable $(\bF, \P^*)$ and  $(\bH,\P^*)$-martingales respectively with $\widetilde G_0=G_0=0$, strongly orthogonal to $S$ under $\P^*$.  \\
On the other hand, if $S$ turns out to be also square-integrable with respect to $\P^*$,
 the $\P^*$-martingale property of $S$ with respect to both the filtrations $\bF$ and $\bH$ also ensures that we can apply~\cite[Theorem 3.2]{ccr1} which
 provides the
Galtchouk-Kunita-Watanabe decomposition of a square-integrable random variable under partial information with respect to $\P^*$. More precisely, every
$\xi \in L^2(\F_T,\P^*)$ can be uniquely written as
\begin{equation}\label{eq:GKWpartial}
\xi =  U_0^{'} + \int_0^T H^\H_u \ud S_u + G_T^{'} \quad \P^*-\mbox{a.s.},
\end{equation}
where $U^{'}_0 \in L^2(\F_0,\P^*)$, $H^\H=\{H_t^\H,\ t \in [0,T]\} \in \Theta(\bH,\P^*)$ and $G^{' }=\{G_t^{'},\ t \in [0,T]\}$  is a square-integrable
$(\bF, \P^*)$-martingale with $G_0^{'}=0$ weakly orthogonal\footnote{We say that a square-integrable $(\bF,\P)$-martingale $O$ is {\em weakly
orthogonal} to a square-integrable $(\bF,\P)$-martingale $M$ if the following condition
\begin{equation*} \label{eq:orthogcond}
\esp{O_T \int_0^T \varphi_t \ud M_t}=0,
\end{equation*}
holds for all processes $\varphi \in \Theta(\bH)$.} to $S$ under $\P^*$, according to Definition 2.1 given in \cite{ccr1}.

\begin{lemma}\label{l1}
Assume $\xi \in L^2(\H_T,\P^*)$  and that $S$ is square-integrable with respect to $\P^*$. Let $\widetilde \beta^\H\in
\Theta(\bH,\P^*)$ and $H^\H \in \Theta(\bH,\P^*)$ be the integrands in the decompositions \eqref{GKW2} and \eqref{eq:GKWpartial} respectively.
Then
\begin{equation}\label{eQ}
H^\H_t=\widetilde \beta_t^\H,  \quad \forall t \in [0,T].
 \end{equation}
\end{lemma}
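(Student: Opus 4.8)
The plan is to compare the two decompositions \eqref{GKW2} and \eqref{eq:GKWpartial} of the same random variable $\xi$ and show they must coincide, using the uniqueness built into the Galtchouk-Kunita-Watanabe decomposition under restricted information from \cite[Theorem 3.2]{ccr1}. The idea is that \eqref{GKW2} is itself a valid instance of \eqref{eq:GKWpartial}: the integrand $\widetilde\beta^\H$ is $\bH$-predictable and in $\Theta(\bH,\P^*)$, and the orthogonal term $G$ is strongly orthogonal to $S$ under $\P^*$, hence in particular weakly orthogonal to $S$ under $\P^*$ in the sense of the footnote. So both \eqref{GKW2} and \eqref{eq:GKWpartial} exhibit $\xi$ as a sum of an $\H_0$-measurable (square-integrable) constant, a stochastic integral of an element of $\Theta(\bH,\P^*)$ against $S$, and a square-integrable $(\bF,\P^*)$-martingale starting at $0$ that is weakly orthogonal to $S$ under $\P^*$. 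By the uniqueness assertion in \cite[Theorem 3.2]{ccr1}, the integrands agree, i.e. $H^\H_t = \widetilde\beta^\H_t$ for $\ud\langle S\rangle^{\P^*}\otimes\P^*$-a.e. $(t,\omega)$, which is the claim \eqref{eQ}.

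First I would record that strong orthogonality implies weak orthogonality in this setting: if $G$ is a square-integrable $(\bH,\P^*)$-martingale strongly orthogonal to $S$ under $\P^*$, then $GS$ is an $(\bH,\P^*)$-martingale, and for any $\varphi\in\Theta(\bH)$ (with the appropriate $\P^*$-integrability, which follows once one checks $\Theta(\bH)$-integrands are handled by the square-integrability hypotheses) the process $\int\varphi\,\ud S$ is an $(\bH,\P^*)$-martingale strongly orthogonal to $G$, so $\espp{G_T\int_0^T\varphi_t\,\ud S_t}=0$; moreover a square-integrable $(\bH,\P^*)$-martingale is a fortiori a square-integrable $(\bF,\P^*)$-martingale since $\bH\subseteq\bF$. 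Then I would note $U_0\in L^2(\H_0,\P^*)\subseteq L^2(\F_0,\P^*)$. Hence the triple $(U_0,\widetilde\beta^\H,G)$ satisfies exactly the requirements placed on $(U_0',H^\H,G')$ in \eqref{eq:GKWpartial}, and uniqueness in \cite[Theorem 3.2]{ccr1} forces $H^\H=\widetilde\beta^\H$ (and incidentally $U_0'=U_0$, $G'=G$).

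The main obstacle, and the only genuine point to verify rather than assert, is that the notion of ``weak orthogonality to $S$ under $\P^*$'' appearing in \eqref{eq:GKWpartial} is implied by ``strong orthogonality to $S$ under $\P^*$'' appearing in \eqref{GKW2}, together with the matching of the integrability classes: one must be sure that the test processes $\varphi\in\Theta(\bH)$ in the footnote's definition give rise to $\int\varphi\,\ud S$ that are genuine (square-integrable) $(\bH,\P^*)$-martingales, so that strong orthogonality of $G$ and $\int\varphi\,\ud S$ under $\P^*$ actually yields the vanishing expectation $\espp{G_T\int_0^T\varphi_t\,\ud S_t}=0$. Granting the standing assumption that $\xi$ and $S$ are square-integrable under $\P^*$, this is routine, and the rest of the argument is just an invocation of the uniqueness in \cite[Theorem 3.2]{ccr1}.
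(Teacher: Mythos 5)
Your argument has a genuine gap at the step where you claim that $G$, the orthogonal term in \eqref{GKW2}, qualifies as the orthogonal term in \eqref{eq:GKWpartial}. The decomposition \eqref{eq:GKWpartial} from \cite[Theorem 3.2]{ccr1} requires the orthogonal part $G'$ to be a square-integrable $(\bF,\P^*)$-martingale, and your justification --- that ``a square-integrable $(\bH,\P^*)$-martingale is a fortiori a square-integrable $(\bF,\P^*)$-martingale since $\bH\subseteq\bF$'' --- has the implication backwards: when $\bH\subseteq\bF$, it is $\bF$-martingales that, if $\bH$-adapted, are automatically $\bH$-martingales, not the other way round. Concretely, $G_t=\bE^{\P^*}[\xi|\H_t]-U_0-\int_0^t\widetilde\beta^\H_u\,\ud S_u$, and since $S$ is an $(\bF,\P^*)$-martingale and $\widetilde\beta^\H$ is $\bF$-predictable, one has $\bE^{\P^*}[G_T|\F_t]=\bE^{\P^*}[\xi|\F_t]-U_0-\int_0^t\widetilde\beta^\H_u\,\ud S_u$, which equals $G_t$ only if $\bE^{\P^*}[\xi|\F_t]=\bE^{\P^*}[\xi|\H_t]$ --- a property that fails in general. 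Hence the triple $(U_0,\widetilde\beta^\H,G)$ need not belong to the class in which \cite[Theorem 3.2]{ccr1} asserts uniqueness, and the identity $H^\H=\widetilde\beta^\H$ does not follow from your comparison.

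The repair is to argue in the opposite direction, which is what the paper does: start from \eqref{eq:GKWpartial} and condition on $\H_T$ (resp.\ $\H_t$) under $\P^*$. Since $\xi$ and $\int_0^T H^\H_u\,\ud S_u$ are $\H_T$-measurable, this gives $\xi=\widehat U_0+\int_0^T H^\H_u\,\ud S_u+\widehat G_T$ with $\widehat U_0=\bE^{\P^*}[U_0'|\H_0]$ and $\widehat G_t=\bE^{\P^*}[G_t'|\H_t]+\bE^{\P^*}[U_0'|\H_t]-\bE^{\P^*}[U_0'|\H_0]$, a square-integrable $(\bH,\P^*)$-martingale null at zero. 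One then checks that $\widehat G$ is weakly orthogonal to $S$ under $\P^*$ (here the weak orthogonality of $G'$ is used, via $\espp{\bE^{\P^*}[G_T'|\H_T]\int_0^T\varphi_u\,\ud S_u}=\espp{G_T'\int_0^T\varphi_u\,\ud S_u}=0$ for $\varphi\in\Theta(\bH)$), and, being $\bH$-adapted, it is in fact strongly orthogonal to $S$ under $\P^*$ by \cite[Remark 2.4]{ccr1}. At that point both this representation and \eqref{GKW2} are Galtchouk--Kunita--Watanabe decompositions of $\xi$ with respect to $S$ and the single filtration $\bH$ under $\P^*$, and uniqueness of that classical decomposition yields \eqref{eQ}. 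Your observation that strong orthogonality implies weak orthogonality is correct, but it is needed in the direction opposite to the one your argument exploits.
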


\begin{proof}

Let $\xi \in L^2(\H_T,\P^*)$ and consider decomposition \eqref{eq:GKWpartial}. By taking the conditional expectation with respect to $\H_T$ under
$\P^*$, we get
\begin{align}
\xi & =  \condesphTT{U_0^{'}} + \int_0^T H^\H_u \ud S_u + \condesphTT{G_T^{'}}  = \widehat U_0 + \int_0^T H^\H_u \ud S_u + \widehat G_T, \label{eq:GKWpartial1}
\end{align}
where we have set $\widehat U_0:=\condesphoo{U_0^{'}}$ and $\widehat G_t:=\condespHH{G_t^{'}} + \condespHH{U_0^{'}}-\condesphoo{U_0^{'}}$, for every $t
\in [0,T]$, so that $\widehat G=\{\widehat G_t, \ t \in [0,T]\}$ turns out to be a square-integrable $(\bH,\P^*)$-martingale with $\widehat G_0=0$
weakly orthogonal to $S$ under $\P^*$. Indeed, $\condespHH{U_0^{'}}-\condesphoo{U_0^{'}} \in L^2(\H_t,\P^*)$, for every $t \in [0,T]$, is clearly
weakly orthogonal to $S$ under $\P^*$ thanks to the martingale property of $S$ with respect to both the filtrations $\bF$ and $\bH$. Furthermore,
for every $\varphi \in \Theta(\bH)$ we have
$$
\espp{\condesphTT{G_T^{'}}\int_0^T \varphi_u \ud S_u}=\espp{\condesphTT{G_T^{'}\int_0^T \varphi_u \ud S_u}}=\espp{G_T^{'}\int_0^T \varphi_u \ud
S_u}=0,
$$
since $G^{'}$ is weakly orthogonal to $S$ under $\P^*$.
Moreover, $\widehat U_0 \in L^2(\H_0,\P^*)$ and since $\widehat G$ is $\bH$-adapted, it is also strongly orthogonal to $S$ under $\P^*$,
see~\cite[Remark 2.4]{ccr1}. Then, by uniqueness of the Galtchouk-Kunita-Watanabe decomposition, representations \eqref{eq:GKWpartial1} and
\eqref{GKW2} for $\xi$ coincide, and in particular this implies \eqref{eQ}.
\end{proof}

The following proposition provides the relationship between the strategies $\beta^\F$ and $\beta^\H$ in the continuous case.

\begin{proposition}\label{prop:relazione_strategie}
 Let $\xi \in L^2(\H_T,\P^*)$ be a contingent claim and assume that $S$ is continuous and square-integrable with respect to $\P^*$.
Then, the following relationship  between the $\bH$-pseudo optimal strategy $\beta^\H$ and the $\bF$-pseudo
optimal strategy $\beta^\F$ holds
\begin{equation}\label{eq:formula_strategia_caso_continuo}
\beta_t^\H = {}^{p,*}\beta_t^\F, \quad t \in [0,T].
\end{equation}
Here, the notation ${}^{p,*}D$ refers to the $(\bH,\P^*)$-predictable projection of an $\R$-valued integrable process $D=\{D_t,\ t \in [0,T]\}$.
 \end{proposition}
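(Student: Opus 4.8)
The plan is to connect the $\bH$-pseudo optimal strategy $\beta^\H$, which by Proposition \ref{prop:strategy-FS} is the integrand in the F\"ollmer-Schweizer decomposition \eqref{eq:fsdecomposition}, to the $\bF$-pseudo optimal strategy $\beta^\F$ through the two Galtchouk-Kunita-Watanabe decompositions under $\P^*$. Since $S$ is continuous, the crucial fact (already invoked in the discussion preceding Corollary \ref{cor:strategia_ottima}) is that the MMM $\P^*$ preserves strong orthogonality to the $\bF$-martingale part $M$ of $S$, so that the GKW decomposition \eqref{GKW1} of $\xi$ under $\P^*$ with respect to $\bF$ coincides with the F\"ollmer-Schweizer decomposition \eqref{eq:fsdecompositionF} under $\P$; in particular $\widetilde\beta^\F = \beta^\F$. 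By Lemma \ref{lemma:MMM} the MMM $\P^0$ with respect to $\bH$ is the restriction of $\P^*$ to $\bH$, and again because $S$ is continuous, orthogonality is preserved under $\P^0$, so the GKW decomposition \eqref{GKW2} of $\xi$ under $\P^*$ with respect to $\bH$ coincides with the F\"ollmer-Schweizer decomposition \eqref{eq:fsdecomposition} under $\P$ with respect to $\bH$; hence $\widetilde\beta^\H = \beta^\H$.

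Having reduced the claim to showing $\widetilde\beta^\H_t = {}^{p,*}\widetilde\beta^\F_t$, I would bring in Lemma \ref{l1}, which already tells us $\widetilde\beta^\H = H^\H$, the $\bH$-predictable integrand in the restricted-information GKW decomposition \eqref{eq:GKWpartial} of $\xi$ under $\P^*$. So it suffices to identify $H^\H$ with the $(\bH,\P^*)$-predictable projection of $\widetilde\beta^\F$. For this I would start from the full-information GKW decomposition \eqref{GKW1}, $\xi = \widetilde U_0 + \int_0^T \widetilde\beta^\F_u \,\ud S_u + \widetilde G_T$, and rewrite the stochastic integral by splitting $\widetilde\beta^\F$ into its $(\bH,\P^*)$-predictable projection plus a remainder:
\begin{equation*}
\int_0^T \widetilde\beta^\F_u \,\ud S_u = \int_0^T {}^{p,*}\widetilde\beta^\F_u \,\ud S_u + \int_0^T \left(\widetilde\beta^\F_u - {}^{p,*}\widetilde\beta^\F_u\right) \ud S_u.
\end{equation*}
The key point is that, since $S$ is continuous, its canonical decomposition under $\bH$ and $\P^*$ has no drift ($S$ is a $\P^*$-martingale with respect to both filtrations) and $\langle S\rangle$ is $\bF^S$-predictable, hence $\bH$-predictable; therefore the process $O_t := \int_0^t (\widetilde\beta^\F_u - {}^{p,*}\widetilde\beta^\F_u)\,\ud S_u$ is a square-integrable $(\bF,\P^*)$-martingale which is weakly orthogonal to $S$ under $\P^*$, because for any $\varphi \in \Theta(\bH)$,
\begin{equation*}
\espp{O_T \int_0^T \varphi_u \,\ud S_u} = \espp{\int_0^T \varphi_u \left(\widetilde\beta^\F_u - {}^{p,*}\widetilde\beta^\F_u\right) \ud \langle S\rangle_u} = 0
\end{equation*}
by the defining property of the predictable projection (using that $\varphi$ and $\langle S\rangle$ are $\bH$-predictable). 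Adding $\widetilde G_T$, which is strongly — hence weakly — orthogonal to $S$ under $\P^*$, and noting $\widetilde U_0 \in L^2(\F_0,\P^*)$, one obtains a decomposition of the form \eqref{eq:GKWpartial} with ${}^{p,*}\widetilde\beta^\F$ playing the role of $H^\H$; by the uniqueness part of \cite[Theorem 3.2]{ccr1} this forces $H^\H = {}^{p,*}\widetilde\beta^\F$, and combining with Lemma \ref{l1} and the identifications above gives $\beta^\H_t = \widetilde\beta^\H_t = H^\H_t = {}^{p,*}\widetilde\beta^\F_t = {}^{p,*}\beta^\F_t$.

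The main obstacle I anticipate is the careful justification that $O$ is genuinely a square-integrable $(\bF,\P^*)$-martingale and that $O_T \in L^2(\F_T,\P^*)$ with weak orthogonality holding in the precise sense of \cite{ccr1} — this requires knowing $\widetilde\beta^\F \in \Theta(\bF,\P^*)$ and that its $(\bH,\P^*)$-predictable projection inherits the corresponding integrability (a Jensen/Cauchy-Schwarz argument against $\langle S\rangle$, analogous to the integrability check at the end of the proof of Proposition \ref{prop:N}), plus verifying that the remainder $\widetilde\beta^\F - {}^{p,*}\widetilde\beta^\F$ is still $\langle S\rangle$-integrable so the stochastic integral is well defined. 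A secondary delicate point is making fully rigorous the two claims that the GKW decompositions under $\P^*$ coincide with the F\"ollmer-Schweizer decompositions under $\P$ in the continuous case; this is standard (it is exactly the mechanism behind \cite[Theorem 3.5]{s01} and is used implicitly before Corollary \ref{cor:strategia_ottima}), relying on the fact that for continuous $S$ the MMM preserves strong orthogonality, together with Lemma \ref{lemma:MMM} for the $\bH$-side, but it should be spelled out to keep the argument self-contained.
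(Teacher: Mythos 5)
Your proposal is correct and follows essentially the same route as the paper: identify the F\"ollmer--Schweizer decompositions \eqref{eq:fsdecompositionF} and \eqref{eq:fsdecomposition} with the GKW decompositions \eqref{GKW1} and \eqref{GKW2} under $\P^*$ in the continuous case, apply Lemma \ref{l1} to get $\beta^\H=H^\H$, and exploit the $\bH$-predictability of $\langle S\rangle$ to identify $H^\H$ with the $(\bH,\P^*)$-predictable projection of $\beta^\F$. The only difference is that where the paper concludes by citing \cite[Proposition 4.1]{ccr1}, you re-derive that identification directly — splitting off $\int_0^\cdot(\widetilde\beta^\F_u-{}^{p,*}\widetilde\beta^\F_u)\,\ud S_u$, checking its weak orthogonality to $S$ under $\P^*$, and invoking uniqueness of the restricted-information GKW decomposition of \cite[Theorem 3.2]{ccr1} — which is a legitimate, self-contained substitute for that citation.
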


 \begin{proof}
When $S$ has continuous trajectories, decompositions \eqref{eq:fsdecompositionF} (with respect to $\bF$) and \eqref{eq:fsdecomposition} (with respect
to $\bH$) coincide to the corresponding Galtchouk-Kunita-Watanabe decompositions under $\P^*$, see \eqref{GKW1} and \eqref{GKW2} above.
Lemma \ref{l1} implies that $\beta^\H = H^\H$ and since $\langle S \rangle$ is $\bH$-predictable, due to the fact that in this case $\langle S \rangle=[S]$, which is $\bF^S$-adapted by
definition, under $\P^*$, by applying  ~\cite[Proposition 4.1]{ccr1} we get  \eqref{eq:formula_strategia_caso_continuo}.
\end{proof}

In the general case, i.e. when $S$ also exhibits jumps, the relationship between $\beta^\H$ and $\beta^\F$ is more complicated.
In~\cite{cvv2010},
the relationship between  $\widetilde \beta^\F$ and  $\beta^\F$, given in \eqref{GKW1} and \eqref{eq:fsdecompositionF} respectively, is written in
terms of the local characteristics associated to $\widetilde G$ under
$\P^*$. A similar result can be applied to derive the relationship between  $\widetilde \beta^\H$ and  $\beta^\H$, given in \eqref{GKW2} and
\eqref{eq:fsdecomposition} respectively, in terms of the local characteristics associated to $G$ under $\P^*$.

We are now in the position to state the following result.
\begin{proposition}\label{casoJ}
Let $\xi \in L^2(\H_T,\P^*)$ be a contingent claim and assume that $S$ is square-integrable with respect to $\P^*$.
The first component of the $\bH$-pseudo optimal strategy $\psi^*=(\beta^\H,\eta^*)$ is given by

\begin{equation}\label{eq:betaH}
\beta^\H_t=H^\H_t+\phi^\H_t, \quad  t \in [0,T].
\end{equation}
In other terms,
\begin{equation} \label{Com}
\beta^\H_t=\frac{\ud  (\int_0^t \widetilde \beta_u^\F \  \ud \langle S\rangle_u)^{p, \bH, *} }{\ud  \langle S \rangle^{p, \bH, *}_t} + \phi_t^\H =
\frac{\ud  (\int_0^t  \beta_u^\F \ \ud \langle S\rangle_u)^{p, \bH, *} }{\ud  \langle S \rangle^{p, \bH, *}_t} + \phi_t^\H- \frac{\ud  (\int_0^t
\phi_u^\F \ \ud \langle S\rangle_u)^{p, \bH, *} }{\ud  \langle S \rangle^{p, \bH, *}_t},
\end{equation}
for every $t \in [0,T]$, where $D^{p, \bH, *}$ denotes the $(\bH,\P^*)$-predictable dual projection
of an $\R$-valued process $D=\{D_t,\ t \in [0,T]\}$ of finite variation, and
the processes $\phi^\F=\{\phi_t^\F,\ t \in [0,T]\}$ and $\phi^\H=\{\phi_t^\H,\ t \in [0,T]\}$ are respectively given by
\begin{equation} \label{Com2}
\phi^\F_t = \frac{ \ud {}^\bF\langle [\widetilde G, S],  -\int_0^\cdot \alpha^\F_r \ud M_r \rangle_t }
{ \ud {}^\bF\langle S \rangle_t} ,
\quad \phi^\H_t = \frac{ \ud {}^\bH\langle [G, S],  -\int_0^\cdot \alpha^\H_r \ud N_r \rangle_t }
{ \ud {}^\bH\langle S \rangle_t} ,
\end{equation}
for every $t \in [0,T]$, where the sharp brackets are computed under $\P$.
\end{proposition}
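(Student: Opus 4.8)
The plan is to reduce the statement to the complete-information setting by regarding $S$ as an $(\bH,\P)$-semimartingale, then to import the description of the F\"ollmer--Schweizer decomposition from \cite{cvv2010}, and finally to translate the resulting $\bH$-objects into the $\bF$-objects of \eqref{Com} by means of Lemma \ref{l1} and \cite[Proposition 4.1]{ccr1}.

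First I would assemble the ingredients. By Proposition \ref{prop:N}, $S$ satisfies the structure condition with respect to $\bH$, with $\bH$-martingale part $N$, mean--variance coefficient $\alpha^\H$ and $\langle N\rangle=\langle M\rangle^{p,\bH}$; moreover $\P^*$ is a square-integrable martingale measure for $S$ with respect to $\bH$ as well, and by assumption $\xi\in L^2(\H_T,\P^*)$ with $S$ square-integrable under $\P^*$, so that the Galtchouk--Kunita--Watanabe decomposition \eqref{GKW2} of $\xi$ with respect to $S$ and $\bH$ under $\P^*$ is available, with integrand $\widetilde\beta^\H$ and orthogonal martingale $G$. I would then apply the comparison result of \cite{cvv2010} with the sub-filtration $\bH$ in place of $\bF$, the martingale measure $\P^*$, and the $\bH$-structure data $(N,\alpha^\H)$ of $S$: this gives that the $\bH$-F\"ollmer--Schweizer integrand $\beta^\H$ in \eqref{eq:fsdecomposition} and the $\P^*$-GKW integrand $\widetilde\beta^\H$ are related by $\beta^\H_t=\widetilde\beta^\H_t+\phi^\H_t$, with $\phi^\H$ the Radon--Nikodym derivative in \eqref{Com2} (the covariations computed under $\P$, and $[G,S]$ being the measure-independent quadratic covariation of $G$ and $S$). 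Invoking Lemma \ref{l1}, which yields $\widetilde\beta^\H=H^\H$, one obtains \eqref{eq:betaH}.

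It then remains to put $\beta^\H$ in the two explicit forms of \eqref{Com}. For the first equality I would combine $H^\H=\widetilde\beta^\H$ with \cite[Proposition 4.1]{ccr1}, which represents the $\bH$-GKW integrand $\widetilde\beta^\H$ under $\P^*$ as the $\langle S\rangle^{p,\bH,*}$-averaged $(\bH,\P^*)$-predictable dual projection of the $\bF$-GKW integrand $\widetilde\beta^\F$ from \eqref{GKW1}, and substitute into \eqref{eq:betaH}. For the second equality I would apply the same comparison result of \cite{cvv2010} once more, now with the filtration $\bF$, the measure $\P^*$ and the data $(M,\alpha^\F)$, to the decompositions \eqref{eq:fsdecompositionF} and \eqref{GKW1}, which gives $\widetilde\beta^\F_t=\beta^\F_t-\phi^\F_t$; plugging this into the first equality and using the linearity of the $(\bH,\P^*)$-predictable dual projection and of the Radon--Nikodym derivative with respect to $\langle S\rangle^{p,\bH,*}$ produces the second equality.

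The hard part will be the first step, namely applying the complete-information theorem of \cite{cvv2010} in the correct generality. The delicate point is that $\P^*$ restricted to $\bH$ need \emph{not} be the minimal martingale measure $\P^0$ of $S$ with respect to $\bH$ (in the presence of jumps the two genuinely differ), so one cannot merely quote \cite{cvv2010} with ``the'' MMM of the $(\bH,\P)$-model. What must be checked is that the correction term in \cite{cvv2010} is expressed through the quadratic covariation of the GKW orthogonal martingale with $S$ (a measure-independent object) and through the structure data $(\alpha^\H,N)$ of $S$ under $\bH$ (intrinsic to $S$ and $\P$, not to the chosen martingale measure), so that the identity transplants verbatim to the sub-filtration $\bH$ and the measure $\P^*$. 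The remaining points --- checking that $\beta^\H$, $\widetilde\beta^\H$, $H^\H$ and $\phi^\H$ belong to the appropriate $\Theta$-spaces, so that the decompositions and the dual projections are uniquely determined --- are routine and go as in the proofs of Proposition \ref{prop:N} and Lemma \ref{l1}.
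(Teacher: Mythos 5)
Your proposal follows essentially the same route as the paper's proof: the comparison result of \cite{cvv2010} (Theorem 3.2 together with the remark on page 8) applied once under $\bF$ with the data $(M,\alpha^\F)$ and once under $\bH$ with $(N,\alpha^\H)$, combined with Lemma \ref{l1} and the dual-projection result of \cite{ccr1} (the paper invokes Proposition 4.9 there rather than 4.1) to pass from $\widetilde\beta^\F$ to $\widetilde\beta^\H=H^\H$, which yields \eqref{eq:betaH} and then \eqref{Com} by linearity. The only difference is that you explicitly flag, without fully resolving, the applicability of the \cite{cvv2010} comparison when the Galtchouk--Kunita--Watanabe decomposition is taken under $\P^*|_{\bH}$ rather than under the $\bH$-minimal martingale measure $\P^0$ --- a subtlety the paper's proof passes over silently --- so your argument is, if anything, slightly more careful while remaining the same proof.
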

\begin{proof}
Taking  Lemma \ref{l1} into account, by~\cite[Proposition 4.9]{ccr1} we obtain
\begin{equation*} \label{com1}
H^\H_t=\widetilde \beta^\H_t = \frac{\ud  (\int_0^t \widetilde \beta_u^\F  \ud \langle S\rangle_u)^{p, \bH, *} }{\ud  \langle S \rangle^{p, \bH, *}_t},\quad t
\in [0,T].
\end{equation*}
Then, by applying~\cite[Theorem 3.2]{cvv2010}, we get $\beta^\H =   \widetilde \beta^\H +  \phi^\H$ and  $\beta^\F =   \widetilde \beta^\F +  \phi^\F$,
and then equalities \eqref{Com}.
Finally, the expressions in \eqref{Com2} follow by~\cite[Remark on page 8]{cvv2010}.
\end{proof}

In the next section we will discuss some Markovian models in presence of an unobservable stochastic factor which affects the stock price dynamics, and
we will show how the computation of the  $\bH$-pseudo optimal strategy leads to filtering problems under the MMM $\P^*$ and the real-world
probability measure $\P$.

\section{Markovian models}\label{sec:markovian_models}
In this section we wish to apply our results to
some Markovian models.
We assume that the dynamics of the risky asset price process $S$ depends  on some unobservable process $X$, which may represent the activity of other
markets, macroeconomics factors or microstructure rules that drive the market.

We consider a European-type contingent claim whose payoff $\xi\in L^2(\H_T,\P)\cap L^2(\H_T,\P^*)$ is of the form
\begin{equation*}\label{eq:claim}
\xi=  H (T, S_T),
\end{equation*}
where $H (t, s)$ is a deterministic function.
We define the processes  $V^\F$ and $V^\H$ by setting
$$
V_t^\F:= \bE^{\P^*}\left[H(T,S_T)|\F_t\right],\quad V_t^\H:= \bE^{\P^*}\left[H(T,S_T)|\H_t\right], \quad t \in [0,T].
$$
If the pair $(X,S)$ is an $(\bF, \P^*)$-Markov process, then there exists a measurable function $g(t,x,s)$ such that
\begin{equation}\label{eq:G}
V^\F_t=\mathbb{E}^{\P^*}\left[H(T,S_T)|\F_t\right]=g(t,X_t,S_t)
 \end{equation}
for every $t \in [0,T]$ and
 \begin{equation}\label{eq:valore_portafoglio}
 V_t^\H= \bE^{\P^*}\left[\bE^{\P^*}\left[ H(T,S_T)|\F_t\right]|\H_t\right] = \mathbb{E}^{\P^*}\left[g(t,X_t,S_t)|\H_t\right], \quad t \in [0,T].
 \end{equation}

We denote by $\L^*_{X,S}$ the $(\bF,\P^*)$-Markov generator of the pair $(X,S)$. Then, by~\cite[Chapter 4, Proposition 1.7]{ek86} the process
\[
\left\{f(t,X_t,S_t)-\int_0^t\L^*_{X,S}f(u,X_u,S_u) \ud u,\quad t \in [0,T]\right\}
\]
is an $(\bF, \P^*)$-martingale for every function $f(t,x,s)$ in the domain of the operator $\L^*_{X,S}$, denoted by $D(\L^*_{X,S})$. Then the following result, which allows to compute the function $g(t,x,s)$, holds.
\begin{lemma}\label{lemma:caratterizzazioneG}
Let $\widetilde{g}(t,x,s)\in D(\L^*_{X,S})$ such that
\begin{equation}\label{eq:problema}
\left\{
\begin{aligned}
\L^*_{X,S} \widetilde{g}(t,x,s)&=0, \quad t \in [0,T)\\
\widetilde{g}(T,x,s)&=H(T,s).
\end{aligned}
\right.
\end{equation}
Then $\widetilde{g}(t,X_t,S_t)= g(t,X_t,S_t)$, for every $t \in [0,T]$, with $g(t,x,s)$ given in \eqref{eq:G}.
\end{lemma}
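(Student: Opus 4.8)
The plan is to exploit the martingale characterization of the conditional expectation $V^\F$ together with the uniqueness of solutions to the Cauchy-type problem \eqref{eq:problema}. The starting point is that, since $\xi = H(T,S_T) \in L^2(\H_T,\P^*) \subseteq L^1(\F_T,\P^*)$, the process $V^\F_t = \bE^{\P^*}[H(T,S_T)|\F_t]$ is a true $(\bF,\P^*)$-martingale with terminal value $H(T,S_T)$, and by \eqref{eq:G} we may write $V^\F_t = g(t,X_t,S_t)$ for a measurable function $g$. The first step is to apply the martingale characterization of the generator recalled just before the statement: since $\widetilde g \in D(\L^*_{X,S})$, the process $\left\{\widetilde g(t,X_t,S_t) - \int_0^t \L^*_{X,S}\widetilde g(u,X_u,S_u)\,\ud u,\ t \in [0,T]\right\}$ is an $(\bF,\P^*)$-martingale. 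Using the first equation in \eqref{eq:problema}, namely $\L^*_{X,S}\widetilde g(t,x,s) = 0$ for $t \in [0,T)$, the integral term vanishes, so $\left\{\widetilde g(t,X_t,S_t),\ t \in [0,T]\right\}$ is itself an $(\bF,\P^*)$-martingale.

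The second step is to use the terminal condition. From the martingale property just established,
\[
\widetilde g(t,X_t,S_t) = \bE^{\P^*}\left[\widetilde g(T,X_T,S_T)\,\middle|\,\F_t\right], \quad t \in [0,T],
\]
and by the second equation in \eqref{eq:problema}, $\widetilde g(T,X_T,S_T) = H(T,S_T)$. Hence
\[
\widetilde g(t,X_t,S_t) = \bE^{\P^*}\left[H(T,S_T)\,\middle|\,\F_t\right] = V^\F_t = g(t,X_t,S_t), \quad t \in [0,T],
\]
which is exactly the claimed identity. This establishes the result as a $\P^*$-a.s.\ equality of processes for every $t \in [0,T]$.

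The only genuine subtlety, which I would address in a brief remark rather than at length, is integrability: to invoke the martingale characterization from~\cite[Chapter 4, Proposition 1.7]{ek86} and to take conditional expectations, one needs $\widetilde g(t,X_t,S_t)$ to be integrable (indeed we want it square-integrable to match the standing assumption $\xi \in L^2(\H_T,\P^*)$). This is implicit in requiring $\widetilde g \in D(\L^*_{X,S})$, and in the concrete Markovian models of Section~\ref{sec:markovian_models} it is verified directly. I do not expect any serious obstacle here — the argument is essentially the Feynman--Kac uniqueness principle — and the proof is short; the substantive work lies not in this lemma but in identifying $\L^*_{X,S}$ and solving \eqref{eq:problema} explicitly in each example.
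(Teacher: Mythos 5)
Your proof is correct and follows essentially the same route as the paper: invoke the generator characterization to conclude that $\widetilde g(t,X_t,S_t)$ is an $(\bF,\P^*)$-martingale once $\L^*_{X,S}\widetilde g=0$, then use the terminal condition $\widetilde g(T,X_T,S_T)=H(T,S_T)$ and the martingale property to identify it with $\bE^{\P^*}[H(T,S_T)\mid\F_t]=g(t,X_t,S_t)$. Your added remark on integrability is a reasonable (and harmless) elaboration of what the paper leaves implicit in the assumption $\widetilde g\in D(\L^*_{X,S})$.
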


\begin{proof}
Let $\widetilde{g}(t,x,s)\in D(\L^*_{X,S})$ be the solution of \eqref{eq:problema}. Then the process $\ds \left\{\widetilde{g}(t, X_t, S_t), \ t \in [0,T]\right\}$
is an $(\bF, \P^*)$-martingale and since $\widetilde{g}(T,X_T,S_T)=H(T,S_T)$, by the martingale property we get that $\widetilde{g}(t,X_t,S_t)=\bE^{\P^*}[H(T, S_T)|\F_t]$.
\end{proof}

In the computation of the $\bH$-pseudo optimal strategies we will consider the case where the information available to traders is represented by the filtration generated by the stock price
process $S$; in other terms, we assume that
\begin{equation}\label{hp:filtrazione_esempi}
\H_t=\F^S_t \qquad \forall \ t\in [0,T].
\end{equation}

We define the filter $\pi(f)=\left\{\pi_t(f), \ t \in [0,T]\right\}$, by setting for each $t \in [0,T]$
\begin{equation*} \label{def:filtro}
\pi_t(f) : =  \mathbb{E}^{\P^*} [ f(t,X_t,S_t) | \F^S_t ]
\end{equation*}
for any measurable function $f(t,x,s)$ such that $\espp{|f(t,X_t,S_t)|}< \infty$, for every $t\in [0,T]$.  It is known  that $\pi(f)$ is a probability
measure-valued process with c\`{a}dl\`{a}g trajectories (see~\cite{KO}), which provides the $\P^*$-conditional law of $X$ given the information flow.

Then, by  \eqref{eq:valore_portafoglio} the
process $V^\H$ can be written in terms of the filter as
\begin{equation}\label{eq:VH}
V_t^\H=\pi_t(g) \qquad \forall \ t \in [0,T],
\end{equation}
where the function $g(t,x,s)$ is the solution of the problem with final value \eqref{eq:problema}.\\
Therefore we can characterize the integrand $\widetilde \beta^\H$ in the Galtchouk-Kunita-Watanabe decomposition \eqref{GKW2} of $\xi$ under partial information as
\begin{equation*}\label{eq:strategia_filtro1}
\widetilde\beta^\H_t=H^\H_t=\frac{\ud \langle \pi(g), S\rangle_t^{*,\bH} }{\ud \langle S\rangle_t^{*,\bH} }, \quad t \in [0,T],
\end{equation*}
where $\langle \ \rangle^{*,\bH}$ denotes the sharp bracket computed with respect to $\bH$ and $\P^*$.

Finally by Proposition \ref{casoJ} we get that the first component of the $\bH$-pseudo optimal strategy is given by
\begin{equation}\label{eq:strategia_filtro}
\beta^\H_t=\widetilde \beta^\H_t+\phi^\H_t=\frac{\ud \langle \pi(g), S\rangle_t^{*,\bH} }{\ud \langle S\rangle_t^{*,\bH} }+\frac{\ud {}^\bH\langle [G, S], -\int_0^\cdot \alpha_s^\H \ud N_s \rangle_t}{\ud {}^\bH \langle S \rangle_t}, \quad t \in [0,T],
\end{equation}
where $G$ is the $(\bH, \P^*)$-martingale in the Galtchouk-Kunita-Watanabe decomposition \eqref{GKW2} of $\xi$, given by

\[
G_t=-U_0+\pi_t(g)-\int_0^t\widetilde \beta^\H_u \ud S_u, \quad t \in [0,T].
\]

In the following, we compute explicitly the process $\widetilde \beta^\H$ and also provide the $\bH$-pseudo optimal strategy $\psi^*=(\beta^\H,\eta^*)$ for a diffusion, a pure jump and a jump-diffusion market model,
respectively, by characterizing the process $\phi^\H$.

\subsection{A diffusion market model}\label{sec:cont_model}

In the first model we consider the case where the dynamics of the risky asset price process $S$ is a geometric diffusion process which depends on an unobservable stochastic factor $X$ given by a Markovian diffusion process, correlated with $S$. Precisely we assume that the pair $(X,S)$ satisfies the following system of stochastic differential equations (in short SDEs):
\begin{equation}\label{eq:sistema1}
\left\{
\begin{aligned}
\ud X_t&= \mu_0(t, X_t) \ud t + \sigma_0(t,X_t) \ud W^0_t , \quad X_0=x\in \R, \\
\ud S_t&=S_{t}\left(\mu_1(t, X_t, S_t)\ud t+\sigma_1(t,S_t)\ud W^1_t\right),\quad S_0=s>0,
\end{aligned}
\right.
\end{equation}
where  $W^0=\{W^0_t, \ t \in [0,T]\}$ and $W^1=\{W^1_t, \ t \in [0,T]\}$ are $(\bF, \P)$-Brownian motions such that $\langle W^0, W^1\rangle_t = \rho t$, for every $t \in [0,T]$, with
$\rho \in [-1,1]$, the coefficients $\mu_0(t,x)$, $ \sigma_0(t,x)>0$, $\mu_1(t,x,s)$ and $\sigma_1(t,s)>0$  are $\R$-valued measurable functions of their arguments. For simplicity we take:
\begin{equation}\label{ass:boundedness_1}
\mu_1(t, X_t, S_t)< c_1 \ \mbox{ and } \ 0<c_2<\sigma_1(t, S_t)<c_3, \ \forall t \in [0,T]
\end{equation}
for some constants, $c_1,c_2,c_3$.

We assume that a unique strong solution for the system \eqref{eq:sistema1} exists, see for instance~\cite{GS}. In particular, this implies
that the pair $(X, S)$ is an $(\bF, \P)$-Markov process.

\subsubsection{Structure conditions of the stock price $S$ with respect to $\bF$ and $\bH$.}

By \eqref{ass:boundedness_1}, and $S_t$ and $\sigma_1(t,S_t)>0$ for every $t \in [0,T]$, we get that $S$ satisfies the structure condition with respect to $\bF$, i.e.
\[
S_t=S_0+M_t+\int_0^t \alpha_u^\F \ud \langle M\rangle_u, \quad t \in [0,T],
\]
where
\[
M_t= \int_0^t S_u \sigma_1(u, S_u) \ud W^1_u \quad \mbox{and}\quad  \alpha_t^\F= \frac{\mu_1(t, X_{t}, S_{t})}{S_{t} \sigma_1^2(t, S_{t})}, \quad t \in [0,T].
\]
According to~\cite[Lemma 2.2]{kxy2006}, $S$ also satisfies the structure condition with respect to the filtration $\bH$, which is given by
\[
S_t = S_0 + N_t + \int_0^t {}^p\alpha_u^\F \ud \langle N \rangle_u, \quad t \in [0,T],
\]
where, as usual, ${}^pY$ denotes the $\bH$-predictable projection of a given (integrable) process $Y$ under $\P$, and $N$ is the $(\bH, \P)$-martingale that
satisfies
\[
\ud N_t = S_t \sigma_1(t, S_t) \left(\ud W^1_t + \frac{\mu_1(t, X_t, S_t)-{}^p\mu_1(t, X_t, S_t)}{\sigma_1(t, S_t)}\ud t\right).
\]
Finally, we define the process $I=\{I_t,\ t \in [0,T]\}$ by setting
\begin{equation}\label{eq:innovation}
I_t:= W^1_t+\int_0^t \frac{\mu_1(u, X_u, S_u)-{}^p\mu_1(u, X_u, S_u)}{\sigma_1(u, S_u)}\ud u
\end{equation}
for each $t \in
[0,T]$. It is known that $I$ is an $(\bH, \P)$-Brownian motion called the {\em innovation process} (see e.g.~\cite{FKK} and~\cite{K}).  Then, $S$
satisfies the SDE
\begin{equation*}
\ud S_t= S_t \left( {}^p\mu_1(t, X_t, S_t) \ud t + \sigma_1(t, S_t) \ud I_t \right),\quad S_0=s>0.
\end{equation*}

\subsubsection{The $\bH$-pseudo optimal strategy}

Notice that, thanks to \eqref{ass:boundedness_1}, the Novikov condition
\begin{equation*} \label{eq:novikov}
\esp{\exp\left\{\frac{1}{2}\int_0^T \frac{\mu^2_1(t, X_t, S_t)}{\sigma^2_1(t, S_t)}\ud t\right\}}<\infty
\end{equation*}
is satisfied.

Therefore, we can introduce the MMM $\P^*$ for the underlying model whose density is given by
\[
\left.\frac{\ud \P^*}{\ud \P}\right|_{\F_T}=L_T,
\]
where the process $L=\{L_t,\ t \in [0,T]\}$ is defined by $\ds L_t=\E\left(-\int \frac{\mu_1(u, X_u, S_u)}{\sigma_1(u, S_u)} \ud W^1_u \right)_t$ for every $t \in
[0,T]$.

As pointed out at the beginning of Section \ref{sec:markovian_models}, we assume condition \eqref{hp:filtrazione_esempi} to compute the $\bH$-pseudo optimal strategy for the contingent claim $\xi=H(T,S_T)$. Note that, under \eqref{hp:filtrazione_esempi} the $(\bH, \P^*)$-optional projection of a process $D$ can be written as $\pi(D)$. \\
By the Girsanov Theorem, the process $\widetilde{W}=\{\widetilde{W}_t, \ t \in [0,T]\}$, defined by
\begin{equation*}\label{eq:W_tilde}
\widetilde{W}_t:=W^1_t+ \int_0^t
\frac{\mu_1(u, X_u, S_u)}{\sigma_1(u, S_u)} \ud u, \quad t \in [0,T],
\end{equation*}
is an $(\bF, \P^*)$-Brownian motion. On the other hand,  $\ds \widetilde{W}_t=I_t+ \int_0^t
\frac{{}^p\mu_1(u,X_u,S_u)}{\sigma_1(u, S_u)} \ud u$ for every $t \in [0,T]$, which in turn implies that $\widetilde{W}$ is an $(\bH, \P^*)$-Brownian motion, since
all the processes involved are $\bH$-adapted.
Then, under the MMM $\P^*$, the system \eqref{eq:sistema1} can be written as
\begin{equation}\label{eq:sistema1b}
\left\{
\begin{aligned}
\ud X_t&= \mu_0(t, X_t) \ud t + \sigma_0(t,X_t) \ud W^0_t , \quad X_0=x\in \R, \\
\ud S_t&=S_{t}\sigma_1(t, S_t)\ud \widetilde{W}_t,\quad S_0=s>0,
\end{aligned}
\right.
\end{equation}
where $W^0$ and $\widetilde{W}$ turn out to be correlated $(\bF, \P^*)$-Brownian motions with correlation coefficient $\rho$.
It is important to stress that, since the change of probability measure is Markovian, the pair $(X,S)$ is also an $(\bF, \P^*)$-Markov process
(see~\cite[Proposition 3.4]{cg09}).
The following result provides the $(\bF, \P^*)$-generator of the Markovian pair $(X,S)$.
\begin{proposition}\label{lemma-generatore-Lcont}
Assume that
\begin{equation}\label{hp:generatore_cont}
\bE^{\P^*}\left[\int_0^T\left\{|\mu_0(t,X_t)|+ \sigma_0^2(t,X_t)\right\}\ud t\right]<\infty.
\end{equation}
Then, the pair $(X,S)$ is an $(\bF, \P^*)$-Markov process with generator
\begin{equation}
\begin{split}
\L^1_{X,S} f(t,x,s) =& \frac{\partial f}{\partial t}+\mu_0(t,x)\frac{\partial f}{\partial x}+ \frac{1}{2} \sigma_0^2(t,x) \frac{\partial^2 f}{\partial
x^2}+ \rho \sigma_0(t,x) \sigma_1(t,s) s  \frac{\partial^2 f}{\partial x\partial s}
  +\frac{1}{ 2} \sigma_1^2(t,s)\, s^2  \frac{\partial^2 f}{\partial s^2} \label{generatore-L1}
  \end{split}
\end{equation}
for every function $f \in  \C^{1,2,2}_b([0,T]\times \R \times \R^+)$.
Moreover, the following decomposition holds
\[
f(t,X_t,S_t)=f(0,X_0,S_0)+\int_0^t\L^1_{X,S} f(r,X_r,S_r) \ud r + M^{1,f}_t
\]
where $M^{1,f}=\{M_t^{1,f},\ t \in [0,T]\}$ is the $(\bF, \P^*)$-martingale given by
 \begin{gather}
  \ud M^{1, f}_t = \frac{\partial f}{\partial x} \sigma_0(t,X_t)\ud W^{0}_t + \frac{\partial f}{\partial s} \sigma_1(t,S_t) S_t\ud \widetilde{W}_t.
  \label{eq:M1f}
   \end{gather}
\end{proposition}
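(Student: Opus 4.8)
The plan is to verify the two assertions in Proposition \ref{lemma-generatore-Lcont} separately: first that $\L^1_{X,S}$ as displayed in \eqref{generatore-L1} is indeed the $(\bF,\P^*)$-generator of the pair $(X,S)$, and second that the martingale part in the It\^o decomposition of $f(t,X_t,S_t)$ has the stated form \eqref{eq:M1f}. Both follow from a direct application of It\^o's formula to $f(t,X_t,S_t)$ using the dynamics of the pair under $\P^*$ given in system \eqref{eq:sistema1b}, together with the correlation structure $\ud\langle W^0,\widetilde W\rangle_t=\rho\,\ud t$.

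First I would apply It\^o's formula to $f(t,X_t,S_t)$ for $f\in\C^{1,2,2}_b([0,T]\times\R\times\R^+)$, using $\ud X_t=\mu_0(t,X_t)\,\ud t+\sigma_0(t,X_t)\,\ud W^0_t$ and $\ud S_t=S_t\sigma_1(t,S_t)\,\ud\widetilde W_t$ from \eqref{eq:sistema1b}. Collecting the finite-variation terms gives exactly the differential operator on the right-hand side of \eqref{generatore-L1}: the $\partial f/\partial t$ term from the time variable, $\mu_0\,\partial f/\partial x$ from the drift of $X$, the three second-order terms $\tfrac12\sigma_0^2\,\partial^2 f/\partial x^2$, $\rho\sigma_0\sigma_1 s\,\partial^2 f/\partial x\partial s$ (the cross term coming from $\ud\langle X,S\rangle_t=\rho\sigma_0(t,X_t)\sigma_1(t,S_t)S_t\,\ud t$), and $\tfrac12\sigma_1^2 s^2\,\partial^2 f/\partial s^2$ from the quadratic variation of $S$. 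Collecting the local-martingale terms yields precisely $\tfrac{\partial f}{\partial x}\sigma_0(t,X_t)\,\ud W^0_t+\tfrac{\partial f}{\partial s}\sigma_1(t,S_t)S_t\,\ud\widetilde W_t$, which is \eqref{eq:M1f}. To conclude that the pair is Markov with this generator and that $M^{1,f}$ is a genuine martingale (not merely a local one), I would invoke the boundedness of $f$ and its derivatives together with the integrability hypothesis \eqref{hp:generatore_cont}: since $\mu_1$ and $\sigma_1$ are bounded by \eqref{ass:boundedness_1}, the only potentially unbounded coefficients appearing are $\mu_0$ and $\sigma_0^2$, and \eqref{hp:generatore_cont} guarantees that $\int_0^T|\L^1_{X,S}f(t,X_t,S_t)|\,\ud t$ is $\P^*$-integrable and that the stochastic integrals in \eqref{eq:M1f} are true $(\bF,\P^*)$-martingales (the $\widetilde W$-integral has bounded integrand, the $W^0$-integrand is square-integrable by \eqref{hp:generatore_cont}). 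The Markov property of $(X,S)$ under $\P^*$ has already been recorded in the text, being inherited from the $\P$-Markov property via the Markovian change of measure, citing \cite[Proposition 3.4]{cg09}; once $\{f(t,X_t,S_t)-\int_0^t\L^1_{X,S}f(r,X_r,S_r)\,\ud r\}$ is shown to be a martingale for all $f$ in the stated class, the identification of $\L^1_{X,S}$ as the generator (on this class) follows from \cite[Chapter 4, Proposition 1.7]{ek86} as quoted above.

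The only mild subtlety, and the step I would treat most carefully, is the justification that $M^{1,f}$ is a true martingale rather than a local one, since $f$ is bounded with bounded derivatives but the coefficient $\sigma_0$ of $X$ need not be bounded. Here the role of assumption \eqref{hp:generatore_cont} is essential: it provides $\bE^{\P^*}[\int_0^T\sigma_0^2(t,X_t)\,\ud t]<\infty$, which makes the $W^0$-stochastic integral in \eqref{eq:M1f} a square-integrable martingale; the $\widetilde W$-stochastic integral has integrand $\tfrac{\partial f}{\partial s}\sigma_1(t,S_t)S_t$, and while $S_t$ is not bounded a priori, one can either note that $\tfrac{\partial f}{\partial s}$ has compact support in $s$ when restricted to the relevant regularity class, or localize and pass to the limit using the boundedness of $f$ itself to control $f(t,X_t,S_t)$ uniformly. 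Everything else is a routine It\^o computation; I do not expect any genuine obstacle beyond this integrability bookkeeping.
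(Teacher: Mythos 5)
Your proposal follows essentially the same route as the paper's proof: It\^o's formula applied to $f(t,X_t,S_t)$ under the $\P^*$-dynamics \eqref{eq:sistema1b} yields the drift operator \eqref{generatore-L1} and the martingale part \eqref{eq:M1f}, the Markov property under $\P^*$ is taken from the Markovian change of measure (\cite[Proposition 3.4]{cg09}), and the true-martingale property of $M^{1,f}$ is secured by \eqref{hp:generatore_cont} together with the boundedness of $f$, its derivatives and $\sigma_1$ --- exactly as in the paper, which simply asserts $\bE^{\P^*}[\int_0^T\sigma_0^2(t,X_t)(\tfrac{\partial f}{\partial x})^2\,\ud t]<\infty$ and $\bE^{\P^*}[\int_0^T\sigma_1^2(t,S_t)S_t^2(\tfrac{\partial f}{\partial s})^2\,\ud t]<\infty$. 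One small correction: $f\in\C^{1,2,2}_b$ gives bounded, not compactly supported, derivatives, so your first alternative for handling the $\widetilde W$-integral is not available; what is needed (and what the paper implicitly uses) is the square-integrability of $S$ under $\P^*$, which follows from the boundedness of $\sigma_1$ via a Gronwall argument, or else your localization argument carried through using the boundedness of $f$ and the integrability of the drift terms.
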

The proof is postponed to Appendix \ref{appendix:b}.

We recall that in the continuous case the Galtchouk-Kunita-Watanabe decomposition \eqref{GKW2} of $\xi$ under the MMM $\P^*$  and the F\"{o}llmer-Schweizer decomposition \eqref{eq:fsdecomposition} coincide and therefore the process $V^\H$ provides the optimal value process $V(\psi^*)$ . Then, to compute $\beta^\H$ we will apply \eqref{eq:optimal_strategy_cont} and \eqref{eq:VH}, which requires the knowledge of the filter. For the partially observable system \eqref{eq:sistema1b}, the filter dynamics is described by the
Kushner-Stratonovich equation given by \eqref{eq:ks_continuo} in Appendix \ref{appendix:a}.
Then, under assumptions \eqref{ass:boundedness_1} and  \eqref{hp:generatore_cont} 
for each $t \in [0,T]$ we get
\begin{equation} \label{basta}
\beta^\H_t=\frac{\ud {}^\bH \langle \pi(g), S \rangle_t}{\ud  {}^\bH \langle S \rangle_t}=\frac{h_{t^-} (g)}{S_{t^{-}}
 \sigma_1(t,S_{t^{-}})}= \frac{\rho  \pi_{t^{-}} \left(\sigma_0 \frac{\partial g }{\partial x}\right) + S_{t^{-}}
 \sigma_1(t,S_{t^{-}})\pi_{t^{-}}\left(\frac{\partial g }{\partial s}\right)}{S_{t^{-}} \sigma_1(t,S_{t^{-}})},
\end{equation}
where $h_{t}(g)$ is defined in  \eqref{eq:h_continuo} with the choice $f=g$ and $g(t,x,s)$ is the solution of the problem \eqref{eq:problema}, with
$\L^*_{X,S}=\L^1_{X,S}$ being the operator given in  \eqref{generatore-L1}.

Now, we check that equation \eqref{eq:formula_strategia_caso_continuo} holds; in other terms, that $\beta^\H$ coincides with the
$(\bH, \P^*)$-predictable projection  of  $\beta^\F$, which represents the first component of the pseudo-optimal strategy
$\psi^\F=(\beta^\F,\eta^\F)$ under complete information.\\
To derive an expression for $\beta^\F$ we consider the process $V^\F=\{V_t^\F,\ t \in
[0,T]\}$ given by
$$
V_t^\F=\bE^{\P^*}\left[H(T,S_T)|\F_t\right], \qquad \forall t \in [0,T]
$$
thanks to Corollary \ref{cor:strategia_ottima} with the choice $\bH =\bF$; consequently,
\[
\beta^\F_t=\frac{\ud {}^\bF \langle V^\F, S\rangle_t }{\ud {}^\bF \langle S\rangle_t}
 \qquad \forall t \in [0,T].
 \]
We observe that $ V^\F$ coincides with the process $\{g(t,X_t,S_t),\ t \in [0,T]\}$, then by It\^{o}'s formula we get
\[
V_t^\F=g(t,X_t,S_t)=\int_0^t \left \{ \frac{\partial g}{\partial x} \sigma_0(u,X_u)\ud W^{0}_u + \frac{\partial g}{\partial s}
\sigma_1(u,S_u) S_u\ud \widetilde{W}_u \right\}, \quad t \in [0,T],
\]
and computing explicitly the sharp brackets ${}^\bF \langle V^\F, S\rangle$ and ${}^\bF \langle S\rangle$, we obtain
\[
\beta_t^\F=\frac{\rho  \sigma_0(t,X_{t^{-}})  \frac{\partial g }{\partial x} + S_{t^{-}}\sigma_1(t,S_{t^{-}}) \frac{\partial g }{\partial s}}{S_{t^{-}}
\sigma_1(t,S_{t^{-}})},    \quad t \in [0,T].
\]
Finally, taking \eqref{basta} and the definition of the filter into account, we get that
$\beta^\H = {}^{p,*}\beta^\F$, where  ${}^{p,*}\beta^\F$ is the $(\bH, \P^*)$-predictable projection of the process $\beta^\F$.

\subsection{A pure jump market model}\label{sec:jump_model}

We now consider the case where the risky asset price dynamics is described by a pure jump process that depends on some unobservable process $X$, given by a Markovian jump-diffusion having common jump times with $S$.
More precisely,
\begin{equation}\label{eq:sistema2}
\left\{
\begin{aligned}
\ud X_t&= \mu_0(t, X_t) \ud t + \sigma_0(t,X_t) \ud W^0_t + \int_Z K_0(\zeta; t, X_{t^-}) \N(\ud t, \ud \zeta), \quad X_0=x\in \R \\
\ud S_t&=S_{t^-}\int_Z K_1(\zeta;t, X_{t^-}, S_{t^-}) \N(\ud t, \ud \zeta),\quad S_0=s>0.
\end{aligned}
\right.
\end{equation}
Here $\N(\ud t, \ud \zeta)$, $(t,\zeta)\in [0,T]\times Z$, with $Z \subseteq \R$, is an $(\bF, \P)$-Poisson random measure having nonnegative
intensity $\eta(\ud \zeta)\ud t$.
The measure $\eta(\ud \zeta)$, defined on the measurable space $(Z, \mathcal Z)$, is  $\sigma$-finite. The corresponding  $(\bF, \P)$-compensated random measure is given by
\begin{equation*} \label{def:cm}
\widetilde \N(\ud t, \ud \zeta)=\N(\ud t, \ud\zeta)-\eta(\ud \zeta)\ud t.
\end{equation*}
The process $W^0$ is an $(\bF,\P)$-Brownian motion independent of $\N(\ud t, \ud \zeta)$ and  $\mu_0(t,x)$, $\sigma_0(t,x) >0$,  $K_0(\zeta;t,x)$ and $K_1(\zeta; t,x,s)$ are $\R$-valued
measurable functions of their arguments such that a unique strong solution for the system \eqref{eq:sistema2} exists.
In particular, this implies that
the pair $(X, S)$ is an $(\bF, \P)$-Markov process.

Note that if the set $
\{\zeta\in Z : K_1(\zeta;t,X_{t^-},S_{t^-})\neq 0 \ \mbox{ and } \ K_0(\zeta;t,X_{t^-})\neq 0\}$
is not empty, $S$ and $X$ have common jump times. This feature may describe, for example, catastrophic events that affect at the same time the
stock price and the hidden state variable that influences it.

We assume that
\begin{equation}\label{ass:boundedness_1b}
 K_1(\zeta;t,X_t, S_t)<c_4, \ \forall (t, \zeta) \in [0,T]\times Z,
 \end{equation}
  for some constant $c_4$, and, to ensure nonnegativity of $S$ we also assume that $K_1(\zeta;t, X_t, S_t) + 1 > 0$ $\P$-a.s..

To describe the jumps of $S$, we introduce the integer-valued random measure

\begin{equation*}\label{m}
m(\ud t, \ud z) =  \sum_{r: \Delta S_r \neq 0}
  \delta _{\{r, \Delta S_r\}} (\ud t, \ud z),
\end{equation*}
where $\delta_a$ denotes as usual the Dirac measure at  point $a$.  Note that the following equality holds
\begin{equation*}\label{misura_m}
\int_0^t\int_{\mathbb{R}}z \ m(\ud u,\ud z)=\int_0^tS_{u^-}\int_{Z}K_1(\zeta;u, X_{u^-}, S_{u^-})\N(\ud u,\ud \zeta)
\end{equation*}
and, in general, for any measurable function $\gamma:\mathbb{R}\rightarrow \mathbb{R}$, we get that
\begin{equation*}\label{integrale_rispetto_m}
\int_0^t\int_{\mathbb{R}}\gamma(z)m(\ud s,\ud z)=\int_0^t\int_{Z}\I_{D_u}(\zeta)  \gamma \left(S_{u^-}K_1(\zeta;u, X_{u^-}, S_{u^-})\right)\N(\ud u,\ud \zeta),
\end{equation*}
where $D_t:=\{\zeta \in Z: K_1(\zeta;t, X_{t^-}, S_{t^-})\neq 0\}$. From now on we assume that
\begin{equation} \label{inten}
\esp{\int_0^T  \eta(D_t)\ud t} < \infty, \mbox{ and } \eta(D_t)>0, \quad \forall t \in [0,T].
\end{equation}
\begin{remark}
Recall that $\nu^\bF(\ud t, \ud z)$ denotes the $(\bF, \P)$-predictable dual projection of the random measure $m(\ud t, \ud z)$. Under condition
\eqref{inten}, it is proved in~\cite{cg06} and~\cite{c06} that $\nu^\bF(\ud t, \ud z)$, is absolutely continuous with respect to  the Lebesgue measure,
that is, $\nu^\bF(\ud t, \ud z)=\nu^\bF_t(\ud z)\ud t$
where, for any $\A \in\mathcal{B}(\mathbb{R})$, $\nu^\bF_t(\A) = \eta(D^\A_t)$ with $D^\A_t:=\{\zeta \in Z: K_1(\zeta;t, X_{t^-}, S_{t^-}) \in \A \setminus \{0\} \}$.\\
In particular, $\nu^\bF_t(\mathbb{R}) = \eta(D_t)$, where $D_t=D^{\R}_t$, for every $t \in [0,T]$,  provides the $(\bF, \P)$-intensity of the point
process $m((0,t]\times \R)$ which counts the total number of jumps of $S$ up to time $t$.\end{remark}

\subsubsection{Structure conditions of the stock price $S$ with respect to $\bF$ and $\bH$.}
We observe that the semimartingale $S$ admits the following canonical $\bF$-decomposition
\[
S_t=S_0+M_t+\Gamma_t, \quad t \in [0,T],
\]
where $M$ is the square-integrable $(\bF, \P)$-martingale given by
\[
\ud M_t= S_{t^-} \int_Z K_1(\zeta; t, X_{t^-}, S_{t^-} )\widetilde \N(\ud t, \ud \zeta)=\int_{\R}z (m(\ud t, \ud z)-\nu^\bF_t(\ud z)\ud t)
\]
and $\Gamma=\{\Gamma_t,\ t \in [0,T]\}$ is an $\R$-valued nondecreasing $\bF$-predictable finite variation process  satisfying
\[
\ud \Gamma_t= S_{t^-} \int_Z K_1(\zeta;t, X_{t}, S_{t})\eta(\ud \zeta) \ud t=\int_{\R}z \ \nu^\bF_t(\ud z)\ud t.
\]
The $\bF$-predictable quadratic variation of $M$ is absolutely continuous with respect to the Lebesgue measure; in fact, $\ud \langle M\rangle_t= a_t
\ud t$, where
$a_t= S^2_{t^-} \int_Z K_1^2(\zeta;t, X_{t^-}, S_{t^-})\eta(\ud \zeta)=\int_{\R}z^2\nu^\bF_t(\ud z)\ud t$, for every $t \in [0,T]$.

In the sequel, we will assume that the $(\bF, \P)$-intensity of the point process $m((0,t]\times \R)$, which counts the jumps of $S$ up to time $t$,  is strictly
positive, i.e. $\nu^\bF_t(\mathbb{R}) = \eta(D_t)>0 $ $\P$-a.s. for every $t \in [0,T]$.
Then, the $(\bF, \P)$-semimartingale $S$ satisfies  the structure condition with respect to $\bF$, i.e.
\begin{gather*}
S_t=S_0+M_t+\int_0^t\alpha^\F_s \ud \langle M\rangle_s, \quad t \in [0,T],
\end{gather*}
where
\begin{gather*}
\alpha^\F_t= \frac{ \int_{Z}K_1(\zeta; t, X_{t}, S_{t})\eta(\ud \zeta)}{S_{t^-} \int_{Z}K_1^2(\zeta; t, X_{t}, S_{t})\eta(\ud \zeta)}=\frac{\int_{\R}z \nu^\bF_t(\ud
z)}{\int_{\R}z^2 \nu^\bF_t(\ud z) }, \quad t \in [0,T].
\end{gather*}
Notice that since \eqref{inten} holds true, then $\alpha^\F$ is well defined and $\esp{\int_0^T(\alpha_t^\F)^2 \ud \langle M \rangle_t}<\infty$.

Moreover, $S$ also admits the canonical $\bH$-decomposition, which is given by
\[
S_t=S_0+\int_0^t z \left[m (\ud u, \ud z)-\nu_u^\bH(\ud z)\ud u\right] + \int_0^t z \nu_u^\bH(\ud z)\ud u, \quad t \in [0,T],
\]
where $\nu_t^\bH(\ud t,\ud z)=\nu_t^\bH(\ud z) \ud t$ denotes the $(\bH, \P)$-predictable dual projection of $m(\ud t,\ud z)$,
and satisfies the structure condition with respect to $\bH$, i.e.
\[
S_t=S_0+N_t+\int_0^t \alpha^\H_s\ud \langle N\rangle_s, \quad t \in [0,T],
\]
with
\begin{equation*}
N_t =\int_0^t \int_\R z \left[m (\ud u, \ud z)-\nu_u^\bH(\ud z)\ud u\right],
\quad \alpha^\H_t=\frac{\int_\R z \nu_t^\bH(\ud z)}{ \int_{\R} z^2 \nu_t^\bH(\ud z)}, \quad t \in [0,T].
\end{equation*}

\subsubsection{The $\bH$-pseudo optimal strategy}

To introduce the MMM $\P^*$ for the underlying pure jump market model, we also assume that if $t$ is a jump time of $S$, then
\begin{equation*}
\alpha^\F_t \Delta M_t = K_1(\zeta; t,X_{t^-},S_{t^-})\frac{\int_{Z}K_1(\zeta; t, X_{t^-},S_{t^-})\eta(\ud
\zeta)}{\int_{Z}K_1^2(\zeta;t,X_{t^-},S_{t^-})\eta(\ud \zeta)}<1
\end{equation*}
and
\begin{equation}\label{ps2}
\bE\left[ \exp\left\{\int_0^T (\alpha^\F_t)^2 \ud \langle M \rangle_t \right\}\right] =
\bE\left[ \exp\left\{\int_0^T \frac{\left(\int_{Z}K_1(\zeta; t, X_{t^-},S_{t^-}) \eta(\ud \zeta)\right)^2}{ \int_{Z}K_1^2(\zeta; t, X_{t^-},S_{t^-})
\eta(\ud \zeta)}\ud t \right\}\right]<\infty.
\end{equation}

\begin{remark}
It is worth stressing that a sufficient condition for \eqref{ps2} is that $\ds \bE\left[ \exp\left\{\int_0^T \eta(D_t)\ud t \right\}\right]<\infty$. Indeed,
\begin{align*}
\bE\left[ \exp\left\{\int_0^T \frac{\left(\int_{Z}K_1(\zeta; t, X_{t^-},S_{t^-}) \eta(\ud \zeta)\right)^2}{ \int_{Z}K_1^2(\zeta; t, X_{t^-},S_{t^-})
\eta(\ud \zeta)}\ud t \right\}\right]&\leq \bE\left[ \exp\left\{\int_0^T \frac{\eta(D_t) \int_{Z}K_1^2(\zeta; t, X_{t^-},S_{t^-}) \eta(\ud \zeta)}{ \int_{Z}K_1^2(\zeta; t, X_{t^-},S_{t^-})
\eta(\ud \zeta)}\ud t \right\}\right]\\
&=\bE\left[ \exp\left\{\int_0^T \eta(D_t)\ud t \right\}\right].
\end{align*}
\end{remark}

Hence, we can apply the Ansel-Stricker Theorem and define the change of probability measure $\ds \left.\frac{\ud \P^*}{\ud \P}\right|_{\F_T}=L_T$,
where the process $L$ is given by
$$
L_t= \doleans{-\int \alpha^\F_r \ud M_r}_t=\doleans{-\int \int_{Z}\alpha^\F_r S_{r^-} K_1(\zeta;r,X_{r^-},S_{r^-}) \widetilde{\N}(\ud r, \ud
\zeta)}_t,
$$
for each $t \in [0,T]$.
Under the MMM $\P^*$, the dynamics of the pair $(X,S)$ becomes
\begin{gather}\label{eq:sistema2b}
\left\{
\begin{split}
\ud X_t&= \mu_0(t, X_t) \ud t + \sigma_0(t,X_t) \ud W^0_t + \int_Z K_0(\zeta; t, X_{t^-}) \N(\ud t,\ud \zeta), \quad X_0=x\in \R\\
\ud S_t &= S_{t^-}\int_Z K_1(\zeta; t, X_{t^-},S_{t^-}) \widetilde{\N}^*(\ud t,\ud \zeta), \quad S_0=s>0,
\end{split}
\right.
\end{gather}
where
\begin{equation*} \label{tilde}
\widetilde{\N}^*(\ud t,\ud \zeta)=\N(\ud t,\ud \zeta)- \eta^{*}_t(\ud \zeta)\ud t
\end{equation*}
and
\[
\eta_t^*(\ud \zeta)\ud t=\left[1-\alpha^\F_t S_{t^-} K_1(\zeta;t,X_{t^-},S_{t^-})\right]\eta(\ud \zeta)\ud t
\]
is the $(\bF,\P^*)$-predictable dual projection of the random measure $\N(\ud t, \ud \zeta)$.
In the sequel we will assume the following integrability condition holds:
\begin{gather}
\espp{\int_0^T  \left(|\mu_0(t, X_t)|+\sigma_0^2(t, X_t)+\int_Z |K_0(\zeta;t, X_{t^-})|\eta_t^*(\ud
\zeta)\right)\ud t}<  \infty. \label{integrab}
\end{gather}
Since the change of probability measure is Markovian, the pair $(X,S)$ is still an $(\bF,\P^*)$-Markov process (see~\cite[Proposition 3.4]{cg09}), whose generator is derived in the following proposition.
\begin{proposition}\label{lemma-generatore-L2}
Assume \eqref{integrab} and
\begin{equation}\label{hp:L2}
\mathbb{E}^{\P^*}\left[\int_0^T\left\{\eta_t^*(D^0_t) + \eta_t^*(D_t) \right\}\ud t\right]
<\infty,
\end{equation}
where $D^0_t=\{\zeta\in Z: K_0(\zeta; t, X_{t^-})\neq 0\}$ and $D_t:=\{\zeta \in Z: K_1(\zeta; t, X_{t^-}, S_{t^-})\neq 0\}$. Then, the pair $(X,S)$ is
an $(\bF, \P^*)$-Markov process with generator
\begin{equation}
\L^2_{X,S} f(t,x,s) = \frac{\partial f}{\partial t}+\mu_0(t,x)\frac{\partial f}{\partial x}+ \frac{1}{2} \sigma_0^2(t,x) \frac{\partial^2 f}{\partial
x^2} +  \int_Z \Delta f(\zeta;t,x,s)\eta_t^*(\ud \zeta)-\frac{\partial f}{\partial s} s \int_{Z}K_1(\zeta;t,x,s)\eta_t^*(\ud
\zeta),\label{generatore-L2}
\end{equation}
where
\[
\Delta f (\zeta;t,x,s):= f \big (t,x+K_0(\zeta;t,x), s( 1 +K_1(\zeta; t,x, s)) \big)-f(t,x,s).
\]
Moreover, the following semimartingale decomposition holds
\[
f(t,X_t,S_t)=f(0,X_0,S_0)+\int_0^t\L^2_{X,S} f(r,X_r,S_r) \ud r + M^{2,f}_t,
\]
where $M^{2,f}=\{M_t^{2,f},\ t \in [0,T]\}$ is the $(\bF, \P^*)$-martingale given by
 \begin{equation}\label{eq:Mf2}
\ud M^{2,f}_t = \frac{\partial f}{\partial x} \sigma_0(t,X_t) \ud W^{0}_t  + \int_Z \Delta f(\zeta;t, X_{t^-},S_{t^-})\widetilde{ \N}^*(\ud t, \ud
\zeta).
 \end{equation}

\end{proposition}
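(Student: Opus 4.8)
The plan is to apply Itô's formula to $f(t,X_t,S_t)$ using the $(\bF,\P^*)$-dynamics \eqref{eq:sistema2b}, to read off the operator from the predictable finite-variation part of the resulting decomposition and the martingale $M^{2,f}$ from the remainder, and then to identify this operator with (a restriction of) the $(\bF,\P^*)$-Markov generator of $(X,S)$ via the martingale characterization \cite[Chapter 4, Proposition 1.7]{ek86} recalled at the beginning of Section \ref{sec:markovian_models}; the $(\bF,\P^*)$-Markov property of $(X,S)$ itself is already granted by \cite[Proposition 3.4]{cg09}, the measure change being Markovian. The computation runs parallel to the one behind Proposition \ref{lemma-generatore-Lcont}, the new ingredient being the treatment of the jump terms.

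First I would record the $(\bF,\P^*)$-semimartingale decompositions read off from \eqref{eq:sistema2b}: $X$ has continuous local martingale part $\int_0^\cdot\sigma_0(r,X_r)\ud W^0_r$, predictable drift $\mu_0(t,X_t)+\int_Z K_0(\zeta;t,X_{t^-})\eta_t^*(\ud\zeta)$ and jumps $\Delta X_t=\int_Z K_0(\zeta;t,X_{t^-})\N(\{t\},\ud\zeta)$, while $S$ has trivial continuous martingale part, predictable drift $-S_{t^-}\int_Z K_1(\zeta;t,X_{t^-},S_{t^-})\eta_t^*(\ud\zeta)$ and jumps $\Delta S_t=S_{t^-}\int_Z K_1(\zeta;t,X_{t^-},S_{t^-})\N(\{t\},\ud\zeta)$. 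Since $W^0$ is independent of $\N$ and $S^c\equiv 0$, one has $\langle S^c\rangle\equiv 0$ and $\langle X^c,S^c\rangle\equiv 0$, so the only second-order term in Itô's formula is $\tfrac12\sigma_0^2(t,X_t)\,\partial^2_{xx}f\,\ud t$, with no mixed $\partial^2_{xs}f$ term. In the jump part of Itô's formula the terms $\partial_x f\,\Delta X_t+\partial_s f\,\Delta S_t$ cancel exactly the jump contributions produced by $\int\partial_x f\,\ud X$ and $\int\partial_s f\,\ud S$, so the net discontinuous contribution collapses to $\sum_{r\le t}[f(r,X_r,S_r)-f(r,X_{r^-},S_{r^-})]=\int_0^t\int_Z\Delta f(\zeta;r,X_{r^-},S_{r^-})\,\N(\ud r,\ud\zeta)$, with $\Delta f$ as in the statement.

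Next I would compensate this integral by writing $\N(\ud r,\ud\zeta)=\widetilde\N^*(\ud r,\ud\zeta)+\eta_r^*(\ud\zeta)\ud r$: the $\widetilde\N^*$-part is an $(\bF,\P^*)$-local martingale and the compensator part $\int_0^t\int_Z\Delta f\,\eta_r^*(\ud\zeta)\ud r$ is absorbed into the finite-variation part. Collecting all $\ud r$-terms, namely $\partial_t f$, $\mu_0\partial_x f$, $\tfrac12\sigma_0^2\partial^2_{xx}f$, the $S$-drift term $-\partial_s f\,s\int_Z K_1\eta_r^*(\ud\zeta)$ and $\int_Z\Delta f\,\eta_r^*(\ud\zeta)$, reproduces exactly the right-hand side of \eqref{generatore-L2} (the distinction between $S_{t^-}$ and $S_t$ being immaterial $\ud t$-a.e.), while the leftover $\int_0^\cdot\sigma_0\partial_x f\,\ud W^0+\int_0^\cdot\int_Z\Delta f\,\widetilde\N^*$ is precisely $M^{2,f}$ as in \eqref{eq:Mf2}.

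The main, and essentially only nonroutine, point is to check that $M^{2,f}$ is a true $(\bF,\P^*)$-martingale and that $\L^2_{X,S}f(t,X_t,S_t)\in L^1(\ud t\otimes\P^*)$; this is exactly where \eqref{integrab} and \eqref{hp:L2} enter, and it also legitimizes the compensation step, since one first needs $\int_0^T\int_Z|\Delta f|\,\N(\ud t,\ud\zeta)<\infty$ $\P^*$-a.s., which holds by \eqref{hp:L2} because $\Delta f$ vanishes outside $D^0_t\cup D_t$. As $f\in\C^{1,2,2}_b$, its first and second derivatives are bounded and $|\Delta f(\zeta;t,x,s)|\le 2\|f\|_\infty\I_{D^0_t\cup D_t}(\zeta)$; hence $\bE^{\P^*}\bigl[\int_0^T\int_Z(\Delta f)^2\eta_t^*(\ud\zeta)\ud t\bigr]\le 4\|f\|_\infty^2\,\bE^{\P^*}\bigl[\int_0^T\{\eta_t^*(D^0_t)+\eta_t^*(D_t)\}\ud t\bigr]<\infty$ by \eqref{hp:L2} and $\bE^{\P^*}\bigl[\int_0^T\sigma_0^2(\partial_x f)^2\ud t\bigr]<\infty$ by \eqref{integrab}, which together yield the $L^2$-martingale property of $M^{2,f}$; combining the same estimates with \eqref{integrab}, the bounds $-1<K_1<c_4$, and handling the $S$-drift term $\partial_s f\,s\int_Z K_1\eta_t^*(\ud\zeta)$ exactly as its continuous-model analogue $\tfrac12\sigma_1^2 s^2\partial^2_{ss}f$ is handled in the proof of Proposition \ref{lemma-generatore-Lcont}, one gets $\L^2_{X,S}f(t,X_t,S_t)\in L^1(\ud t\otimes\P^*)$, so the Dynkin-type decomposition above is the defining one and \eqref{generatore-L2} is the restriction of the generator to $\C^{1,2,2}_b$. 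I do not foresee any deeper obstacle: the care is all in tracking the jump terms and the cancellation $\partial_x f\,\Delta X+\partial_s f\,\Delta S$ in Itô's formula.
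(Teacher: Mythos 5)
Your proposal is correct and follows essentially the same route as the paper: Itô's formula applied to $f(t,X_t,S_t)$ under the $\P^*$-dynamics \eqref{eq:sistema2b} (the Markov property coming from the Markovian change of measure), with the true-martingale property of $M^{2,f}$ deduced from \eqref{integrab} and \eqref{hp:L2} via the bound $|\Delta f|\le 2\|f\|$ on $D^0_t\cup D_t$ — the paper uses the first-moment estimate where you use the second-moment one, which is immaterial. Your extra claim that $\L^2_{X,S}f(t,X_t,S_t)\in L^1(\ud t\otimes\P^*)$ is superfluous (and, because of the factor $S_{t^-}$ in the drift term, not obviously implied by the stated hypotheses): once $M^{2,f}$ is a true martingale and $f$ is bounded, the process $f(t,X_t,S_t)-\int_0^t\L^2_{X,S}f(r,X_r,S_r)\,\ud r=f(0,X_0,S_0)+M^{2,f}_t$ is automatically a martingale, which is all the statement requires.
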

The proof is postponed to Appendix \ref{appendix:b}.

As before, we assume \eqref{hp:filtrazione_esempi}, so that we can compute the $\bH$-pseudo optimal strategy via \eqref{eq:strategia_filtro}.
Taking the  dynamics of the filter for the pure jump model  given by \eqref{eq:ks_jump} in Appendix \ref{appendix:a} into account, under the
assumptions \eqref{integrab} and \eqref{hp:L2} we get
\begin{equation*}
\beta^\H_t=\widetilde \beta^\H_t+\phi^\H_t , \quad t \in [0,T],
\end{equation*}
where
\begin{align}
\widetilde \beta^\H_t&=\frac{\ud  \langle \pi(g), S \rangle_t^{*,\bH}}{\ud  \langle S \rangle_t^{*,\bH}}=\frac{\int_\R z \ w^g(t, z) \  \nu_t^{\bH,*}(\ud z)}{\int_\R
z^2 \ \nu_t^{\bH,*}(\ud z)},  \ t \in [0,T], \nonumber\\
 \phi^\H_t&=  \frac{\ud {}^\bH\langle  \sum_{r \leq \cdot} \Delta G_r \Delta S_r,  -\int_0^\cdot \alpha^\H_r \ud N_r \rangle_t}
{\ud {}^{\bH} \langle S \rangle_t }=\frac{\alpha_t^\H \int_\R z^2 \left\{ \widetilde\beta^\H_t z -w^g(t,z)  \right\} \nu^{\bH}_t (\ud z)}{\int_\R z^2 \nu^\bH_t(\ud z)}, \ t \in [0,T]. \label{eq:phiH}
\end{align}
Here $g(t,x,s)$ is the solution to \eqref{eq:problema} with $\L^*_{X,S}=\L^2_{X,S}$ and
$w^g(t,z)$ is given by
$$
w^g(t,z)= \frac{\ud \pi_{t^-} (g \nu^{\bF,*})}{\ud \nu^{\bH,*}_t} (z) - \pi_{t^-}(g) +
  \frac{ \ud \pi_{t^-} (\overline{\L} g)}{\ud \nu^{\bH,*}_{t}} (z),\quad t \in [0,T].
 $$
Moreover, $ \nu^{\bF,*}_t(\ud z) \ud t= (1-\alpha^\F_t z) \nu_t^\bF(\ud z) \ud t$  and $\nu^{\bH,*}_t(\ud z) = (1- \alpha^\H_t z) \nu^\bH_t(\ud z) \ud
t$ denote  respectively the $(\bF,\P^*)$-predictable and the $(\bH,\P^*)$-predictable dual projections
of the random measure $m(\ud t, \ud z)$,  and by~\cite[Proposition 2.2]{c06}, $\nu^{\bH,*}_t(\ud z)= \pi_{t^-}(\nu^{\bF,*}(\ud z))$.
Let us observe that the operator  $\overline{\L}$, defined in Proposition \ref{prop:ks_jumpdiff},
 takes common jump times between $S$ and $X$ into account.

It is worth observing that the $(\bH,\P)$-predictable dual projection $\nu^\bH_t(\ud z)\ud t$ of the measure $m(\ud t,\ud z)$ appearing in \eqref{eq:phiH},  can be written in terms
of the filter under the real-world probability measure $\P$. Indeed, set $\widetilde \pi_t(f):=\esp{f(t,X_t,S_t)|\H_t}$, for every $t \in [0,T]$. Then,
$\nu^\bH_t(\ud z)= \widetilde \pi_{t^-} ( \nu^\bF(\ud z))$(see
again~\cite[Proposition 2.2]{c06} for the proof). Therefore, in presence of jumps we also need the knowledge the filter dynamics under $\P$.
The Kushner-Stratonovich equation satisfied by $\widetilde \pi$ is given by \eqref{eq:ks_P} in Appendix
\ref{appendix:a}.

\subsection{A jump-diffusion market model}\label{sec:jumpdiff_model}
In the last part of this overview on Markovian models we wish to discuss the case of a jump-diffusion market model, where the risky asset  price
dynamics is described by a geometric jump diffusion, where as usual, $X$ represents an unobservable stochastic factor that influences the dynamics of $S$, and it is modeled by a Markovian jump-diffusion having common jump times with S. Precisely, we consider the following system of SDEs:
\begin{equation}\label{eq:sistema3}
\left\{
\begin{aligned}
\ud X_t&= \mu_0(t, X_t) \ud t + \sigma_0(t,X_t) \ud W^0_t + \int_Z K_0(\zeta; t, X_{t^-}) \N(\ud t, \ud \zeta), \quad X_0=x\in \R \\
\ud S_t&=S_{t^-}\left(\mu_1(t, X_t, S_t)\ud t+\sigma_1(t,S_t)\ud W^1_t+\int_Z K_1(\zeta;t, X_{t^-}, S_{t^-}) \N(\ud t, \ud \zeta)\right),\quad
S_0=s>0,
\end{aligned}
\right.
\end{equation}
where $\N(\ud t, \ud \zeta)$ is an  $(\bF, \P)$-Poisson random measure with mean measure $\eta_t(\ud \zeta)\ud t$ according to the previous models, $W^0$ and $W^1$ are $(\bF,\P)$-Brownian motions independent of $\N(\ud t, \ud \zeta)$ such that $\langle W^0,W^1\rangle_t=\rho t$, for every $t \in [0,T]$,
with $\rho \in [-1,1]$, the coefficients $\mu_0(t,x), \mu_1(t,x,s)$, $\sigma_0(t,x)>0, \sigma_1(t,x,s)>0$, $K_0(\zeta;t,x)$ and $K_1(\zeta; t,x,s)$ are
$\R$-valued measurable functions of their arguments such that a unique strong solution for the system \eqref{eq:sistema3} exists, see for
instance~\cite{OS}. In particular, this implies that the pair $(X, S)$ is an $(\bF, \P)$-Markov process.

To ensure nonnegativity of $S$ we assume that $K_1(\zeta;t, X_t, S_t) + 1 > 0$ $\P$-a.s. for every $(t,\zeta)\in [0,T] \times Z$.
For simplicity we also take
\begin{equation}\label{ass:boundedness_2}
\mu_1(t, X_t, S_t)<c_1, \ 0<c_2<\sigma_1(t, S_t)<c_3 \ \mbox{ and } \ K_1(\zeta;t, X_t, S_t)<c_4, \quad  \forall t \in [0,T], \ \zeta \in Z,
\end{equation}
for some constants $c_1, c_2, c_3, c_4$.

Finally, according to the previous model, we assume that
\begin{equation}\label{int_2}
\esp{\int_0^T \eta(D_t) \ud t}<\infty.
\end{equation}

\subsubsection{Structure conditions of the stock price $S$ with respect to $\bF$ and $\bH$.}

The canonical $\bF$-decomposition of $S$ with respect to $\bF$ is
given by
\[
S_t=S_0+M_t+\Gamma_t, \quad t \in [0,T],
\]
where $M$ is the square-integrable $(\bF, \P)$-martingale
\[
\ud M_t= S_t \sigma_1(t, S_t) \ud W^1_t + S_{t^-} \int_Z K_1(\zeta; t, X_{t^-}, S_{t^-} ) \widetilde \N(\ud t, \ud \zeta)=S_t \sigma_1(t, S_t) \ud W^1_t +\int_\R z (m(\ud t, \ud
z)-\nu^\bF_t(\ud z)\ud t)
\]
and $\Gamma$ is the following $\R$-valued nondecreasing $\bF$-predictable finite variation process
\[
\ud \Gamma_t= S_{t^-}\left\{\mu_1(t, X_{t^-}, S_{t^-} )+\int_Z K_1(\zeta;t, X_{t^-}, S_{t^-})\eta(\ud \zeta)\right\} \ud t = \left\{S_{t^-}  \mu_1(t, X_{t^-}, S_{t^-})+\int_\R z \nu^\bF_t(\ud z)\right\} \ud t.
\]
We note that the $\bF$-predictable quadratic variation of $M$ is absolutely continuous with respect to the Lebesgue measure, that is,
$\ud \langle M\rangle_t= a_t \ud t$ with
\[
a_t= S^2_{t^-} \left(\sigma_1^2(t, S_{t^-}) + \int_Z K_1^2(\zeta;t,X_{t^-}, S_{t^-})\eta(\ud \zeta)\right) =S^2_{t^-} \sigma_1^2(t, S_{t^-}) +\int_\R z^2 \nu^\bF_t(\ud z), \quad t \in [0,T].
\]
Then, the semimartingale $S$ satisfies the structure condition with respect to $\bF$ and with respect to $\bH$, which are respectively given by
\[
S_t=S_0+M_t+\int_0^t\alpha^\F_s \ud \langle M\rangle_s, \quad t \in [0,T]
\]
and
\[
S_t=S_0+N_t+\int_0^t \alpha^\H_s \ud \langle N\rangle_s, \quad t \in [0,T],
\]
where
\begin{gather}\label{eq:alphaF}
\alpha^\F_t= \frac{\mu_1(t, X_{t^-}, S_{t^-})+ \int_{Z}K_1(\zeta; t, X_{t^-}, S_{t^-}) \eta(\ud \zeta)}{S_{t^-}  \left(\sigma_1^2(t, S_{t^-})+\int_{Z}K_1^2(\zeta; t, X_{t^-}, S_{t^-}) \eta(\ud
\zeta)\right)}=\frac{S_{t^-} \mu_1(t, X_{t^-}, S_{t^-})+\int_\R z \ \nu^\bF_t(\ud z)}{S^2_{t^-} \sigma^2_1(t, S_{t^-}) + \int_\R z^2 \nu^\bF_t(\ud z)}, \quad t \in
[0,T],
\end{gather}
and
\begin{gather*}
\ud N_t= S_{t} \sigma_1(t, S_t)\ud I_t + \int_\R z (m(\ud t, \ud z)- \nu_t^\bH(\ud z)\ud t), \quad \alpha^\H_t=\frac{S_{t^-}{}^p \mu_1(t, X_{t^-}, S_{t^-}) + \int_{\R}z \nu_t^\bH(\ud z)
}{S^2_{t^-} \sigma_1^2(t, S_{t^-})+\int_{\R}z^2 \nu_t^\bH(\ud z)},
\end{gather*}
for every $t \in [0,T]$, where $I$ is the $(\bH, \P^*)$-Brownian motion defined in \eqref{eq:innovation}. Notice that, under the assumptions on the coefficients of the dynamics of $S$, $\alpha^\F$ is well defined and because of \eqref{int_2} also $\esp{\int_0^T(\alpha_t^\F)^2 \ud \langle M \rangle_t}<\infty$ is fulfilled.

\subsubsection{The $\bH$-pseudo optimal strategy}

To introduce the MMM $\P^*$ for the underlying market model, we assume that at every jump time of $S$, the following condition
\begin{equation*}\label{jum}
\alpha^\F_t \Delta M_t  = K_1(\zeta; t,X_{t^-},S_{t^-})\frac{\mu_1(t,X_{t^-}, S_{t^-})+ \int_{Z}K_1(\zeta; t,X_{t^-},S_{t^-})\eta(\ud
\zeta)}{\sigma_1^2(t,S_{t^-})+\int_{Z}K_1^2(\zeta;t,X_{t^-},S_{t^-})\eta(\ud \zeta)}<1
\end{equation*}
holds and that
\begin{equation}\label{ps3}
\bE\left[\exp\left\{\frac{1}{2}\int_0^T (\alpha^\F_t)^2 \ud \langle M^c \rangle_t + \int_0^T (\alpha^\F_t)^2 \ud \langle M^d \rangle_t\right\}\right] <
\infty.
\end{equation}

\begin{remark}
A sufficient condition for \eqref{ps3} is given by $\ds \esp{\exp\left\{2 \int_0^T \eta(D_t)\ud t\right\}}<\infty$. Indeed,
\begin{align*}
&\bE\left[\exp\left\{\frac{1}{2}\int_0^T (\alpha^\F_t)^2 \ud \langle M^c \rangle_t + \int_0^T (\alpha^\F_t)^2 \ud \langle M^d \rangle_t\right\}\right]\\
&\qquad \leq \esp{\exp\left\{\int_0^T\frac{\left(\mu_1(t, X_{t^-}, S_{t^-})+ \int_{Z}K_1(\zeta; t, X_{t^-}, S_{t^-}) \eta(\ud \zeta)\right)^2}{\sigma_1^2(t, S_{t^-})+\int_{Z}K_1^2(\zeta; t, X_{t^-}, S_{t^-}) \eta(\ud
\zeta)} \ud t\right\}}\\
&\qquad \leq \esp{\exp\left\{2\int_0^T\left(\frac{\mu_1^2(t, X_{t^-}, S_{t^-})}{\sigma_1^2(t, S_{t^-})}+\eta (D_t)\right)  \ud t\right\}}\\
& \qquad \leq \exp\left\{2 \ \frac{c_1^2}{c_2^2} \ T\right\} \esp{\exp\left\{2\int_0^T\eta (D_t)  \ud t\right\}}
\end{align*}

\end{remark}
Then, we can apply the Ansel-Stricker Theorem and define the change of probability measure $\ds \left.\frac{\ud \P^*}{\ud \P}\right|_{\F_T}=L_T$ where
the process $L$ is given by $\ds L_t= \doleans{- \int \alpha^\F_r \ud M_r}_t$ with $ t \in [0,T]$.
Under the MMM $\P^*$, the dynamics of the pair $(X,S)$ can be written as
\begin{equation*}
\left\{
\begin{aligned}
\ud X_t&= \mu_0(t, X_t) \ud t + \sigma_0(t,X_t) \ud W^0_t + \int_Z K_0(\zeta; t, X_{t^-}) \N(\ud t,\ud \zeta), \quad X_0=x \in \R\\
\ud S_t&= S_{t^-}\left\{\sigma_1(t, S_t)\ud W^*_t + \int_Z K_1(\zeta;t,X_{t^-}, S_{t^-})\widetilde \N^*(\ud t,\ud \zeta)\right\}, \quad S_0=s
>0,
\end{aligned}
\right.
\end{equation*}
where $W^0,W^*$ are $(\bF, \P^*)$-Brownian motions, with
 \[
 W^*_t=W^1_t+\int_0^tS_u \alpha_u^\F \sigma(u, S_u) \ud u, \ t \in [0,T],
 \]
whose correlation coefficient is $\rho$, $\widetilde \N^*(\ud t,\ud \zeta)$  is the compensated Poisson measure under $\P^*$ given by
\[
\widetilde \N^*(\ud t,\ud \zeta)= \N(\ud t,\ud \zeta)-\eta^*_t(\ud \zeta) \ud t
\]
and $\eta_t^*(\ud \zeta)= (1-\alpha^\F_t S_t K_1(t, X_t, S_t))\eta(\ud \zeta)$ for every $t \in [0,T]$, with $\alpha^\F$ in \eqref{eq:alphaF}.

We will assume that
\begin{gather}
\mathbb{E}^{\P^*}\left[ \int_0^T  \left(|\mu_0(t, X_t)|+\sigma_0^2(t, X_t)+ \eta_t^*(D^0_t)+ \int_Z |K_0(\zeta;t, X_t)|\eta_t^*(\ud \zeta)\right)\ud t\right] <
\infty,\label{integrab3}\\
\mathbb{E}^{\P^*}\left[ \int_0^T   \eta_t^*(D_t)\ud t\right] <
\infty,\label{integrab4}
\end{gather}
where $D^0_t\!=\!\{\zeta\in Z: K_0(\zeta; t, X_{t^-} )\neq 0\}$ and $D_t\!=\!\{\zeta\in Z: K_1(\zeta;t,X_{t^-},S_{t^-})\neq0\}$.

Again, since the change of probability measure is Markovian, the pair $(X,S)$ is still an $(\bF,\P^*)$-Markov process and we provide the structure of its $\P^*$-generator in the following result.
\begin{proposition}\label{lemma-generatore-L3}
Assume \eqref{integrab3}, \eqref{integrab4}.
Then, the pair $(X,S)$ is an $(\bF, \P^*)$-Markov process with generator
\begin{equation} \label{generatore-L3}
\begin{split}
\L^3_{X,S} f(t,x,s) =& \frac{\partial f}{\partial t}+\mu_0(t,x)\frac{\partial f}{\partial x}+ \frac{1}{2} \sigma_0^2(t,x) \frac{\partial^2 f}{\partial
x^2}+ \rho \sigma_0(t,x) \sigma_1(t,s) s  \frac{\partial^2 f}{\partial x\partial s} \\
  &+\frac{1}{ 2} \sigma_1^2(t,s)\, s^2  \frac{\partial^2 f}{\partial s^2} +  \int_Z \Delta f(\zeta;t,x,s)\eta_t^*(\ud \zeta)-\frac{\partial f}{\partial
  s} s \int_Z K_1(\zeta;t,x,s) \eta_t^*(\ud \zeta),
    \end{split}
\end{equation}
where
\[
\Delta f (\zeta;t,x,s)= f \big (t,x+K_0(\zeta;t,x), s( 1 +K_1(\zeta; t,x, s)) \big)-f(t,x,s).
\]
Moreover, the following semimartingale decomposition holds:
\[
f(t, X_t, S_t)= f(0, x_0, s_0) + \int_0^t \L^3_{X,S}f (r,X_r,S_r)\ud r + M^{3, f}_t,
\]
where $M^{3, f}=\{M_t^{3,f},\ t \in [0,T]\}$ is the $(\bF, \P^*)$-martingale given by
 \begin{gather}\label{eq:M3f}
\ud M^{3, f}_t =  \frac{\partial f}{\partial x} \sigma_0(t,X_t) \ \ud W^{0}_t + \frac{\partial f}{\partial s} \sigma_1(t,S_t) S_t \ \ud W^*_t
+ \int_Z \Delta f(\zeta;t,X_{t^-},S_{t^-}) \widetilde \N^*(\ud t, \ud \zeta).
\end{gather}

\end{proposition}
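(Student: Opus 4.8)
The plan is to follow exactly the scheme already used for Propositions~\ref{lemma-generatore-Lcont} and~\ref{lemma-generatore-L2}, adapting it to the mixed diffusive/jump dynamics. First I would recall the $\P^*$-dynamics of the pair $(X,S)$ displayed just before the statement: under $\P^*$ the only change is that $W^0,W^1$ are replaced by the correlated $(\bF,\P^*)$-Brownian motions $W^0,W^*$ (with $\langle W^0,W^*\rangle_t=\rho t$) and the compensator of $\N(\ud t,\ud\zeta)$ becomes $\eta_t^*(\ud\zeta)\ud t=(1-\alpha^\F_t S_t K_1(\zeta;t,X_t,S_t))\eta(\ud\zeta)\ud t$, while the jump amplitudes $K_0,K_1$ (and hence the positivity of $S$, guaranteed by $K_1+1>0$) are unaffected. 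Since this change of measure acts on the local characteristics in a Markovian fashion, the pair $(X,S)$ remains an $(\bF,\P^*)$-Markov process by~\cite[Proposition 3.4]{cg09}, so it only remains to identify the generator and the associated martingale.

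Next I would apply the It\^o formula for c\`adl\`ag semimartingales (see e.g.~\cite{js}) to $f(t,X_t,S_t)$ for $f\in\C^{1,2,2}_b([0,T]\times\R\times\R^+)$, using the $\P^*$-dynamics. The continuous martingale parts of $X$ and $S$ produce the stochastic integrals $\frac{\partial f}{\partial x}\sigma_0(t,X_t)\ud W^0_t$ and $\frac{\partial f}{\partial s}\sigma_1(t,S_t)S_t\ud W^*_t$; the jumps of $\N$, at which $(X,S)$ moves to $(X_{t^-}+K_0(\zeta;t,X_{t^-}),\,S_{t^-}(1+K_1(\zeta;t,X_{t^-},S_{t^-})))$, produce $\int_Z\Delta f(\zeta;t,X_{t^-},S_{t^-})\N(\ud t,\ud\zeta)$, which I split into the compensated term $\int_Z\Delta f\,\widetilde\N^*(\ud t,\ud\zeta)$ and the drift $\int_Z\Delta f\,\eta_t^*(\ud\zeta)\ud t$. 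Since $W^0$ is independent of $\N$ there are no mixed brackets, so collecting all the absolutely continuous finite-variation terms — the time derivative, the drift $\mu_0\frac{\partial f}{\partial x}$ of $X$, the compensation drift $-\frac{\partial f}{\partial s}s\int_Z K_1\eta_t^*(\ud\zeta)$ of $S$, the second-order terms $\tfrac12\sigma_0^2\frac{\partial^2 f}{\partial x^2}$, $\rho\sigma_0\sigma_1 s\frac{\partial^2 f}{\partial x\partial s}$ (from the correlation of $W^0$ and $W^*$) and $\tfrac12\sigma_1^2 s^2\frac{\partial^2 f}{\partial s^2}$, and the jump compensation $\int_Z\Delta f\,\eta_t^*(\ud\zeta)$ — yields exactly $\L^3_{X,S}f$ as in~\eqref{generatore-L3}, while the remaining stochastic integrals give precisely $M^{3,f}$ as in~\eqref{eq:M3f}.

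The delicate point is to verify that $M^{3,f}$ is a genuine $(\bF,\P^*)$-martingale, not merely a local one, since this is what the martingale characterization of the generator (\cite[Chapter 4, Proposition 1.7]{ek86}) requires. For the Brownian parts this reduces to showing $\espp{\int_0^T\big(\tfrac{\partial f}{\partial x}\big)^2\sigma_0^2(t,X_t)\ud t}<\infty$ and $\espp{\int_0^T\big(\tfrac{\partial f}{\partial s}\big)^2\sigma_1^2(t,S_t)S_t^2\ud t}<\infty$, which follow from the boundedness of $f$ and its derivatives, the bounds~\eqref{ass:boundedness_2} on $\sigma_1$, the square-integrability of $S$, and the integrability assumption~\eqref{integrab3}; for the jump part one needs $\espp{\int_0^T\int_Z|\Delta f(\zeta;t,X_{t^-},S_{t^-})|\eta_t^*(\ud\zeta)\ud t}<\infty$, which holds because $|\Delta f(\zeta;t,x,s)|\le 2\|f\|_\infty\I_{D_t}(\zeta)$, so the integrand is dominated by $2\|f\|_\infty\,\eta_t^*(D_t)$ and~\eqref{integrab4} applies. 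I expect this true-martingale verification to be the only nontrivial step. Once it is in place, \cite[Chapter 4, Proposition 1.7]{ek86} applied to the $(\bF,\P^*)$-Markov pair $(X,S)$ gives that $\L^3_{X,S}$ is its generator and that the stated semimartingale decomposition $f(t,X_t,S_t)=f(0,x_0,s_0)+\int_0^t\L^3_{X,S}f(r,X_r,S_r)\ud r+M^{3,f}_t$ holds with $M^{3,f}$ as in~\eqref{eq:M3f}.
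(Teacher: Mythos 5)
Your proposal follows exactly the paper's route: the Markovian change of measure via \cite[Proposition 3.4]{cg09} preserves the $(\bF,\P^*)$-Markov property, It\^o's formula under the $\P^*$-dynamics identifies $\L^3_{X,S}f$ and $M^{3,f}$, and the only substantive work is checking that $M^{3,f}$ is a true martingale through the integrability of its three integrands. This matches the paper's proof step for step.

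One small but genuine slip: in the jump term you claim $|\Delta f(\zeta;t,x,s)|\le 2\|f\|_\infty\,\I_{D_t}(\zeta)$ and conclude that \eqref{integrab4} alone suffices. That domination is false in general, because $\Delta f$ need not vanish off $D_t$: on $D^0_t\setminus D_t$ one has $K_0(\zeta;t,X_{t^-})\neq 0$ while $K_1(\zeta;t,X_{t^-},S_{t^-})=0$, so $X$ jumps alone and $\Delta f(\zeta;t,X_{t^-},S_{t^-})$ is generally nonzero. The correct bound is
\[
|\Delta f(\zeta;t,X_{t^-},S_{t^-})|\le 2\|f\|\,\I_{D^0_t\cup D_t}(\zeta)\le 2\|f\|\bigl(\I_{D^0_t}(\zeta)+\I_{D_t}(\zeta)\bigr),
\]
so that
\[
\espp{\int_0^T\!\!\int_Z |\Delta f(\zeta;t,X_{t^-},S_{t^-})|\,\eta_t^*(\ud\zeta)\,\ud t}\le 2\|f\|\,\espp{\int_0^T\bigl(\eta_t^*(D^0_t)+\eta_t^*(D_t)\bigr)\ud t},
\]
which is finite by \eqref{integrab3} (which contains the $\eta_t^*(D^0_t)$ term) together with \eqref{integrab4}. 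This is exactly how the paper argues, and since both conditions are among your hypotheses the repair is immediate; the rest of your verification (the two Brownian integrands, using boundedness of the derivatives of $f$, \eqref{ass:boundedness_2}, square-integrability of $S$ and \eqref{integrab3}) is fine.
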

The proof is postponed to Appendix \ref{appendix:b}.

Finally we assume \eqref{hp:filtrazione_esempi}, as in the previous examples. Therefore, taking  the dynamics of the filter for the jump-diffusion case given in \eqref{eq:ks_jumpdiff} in Proposition \ref{prop:ks_jumpdiff}, Appendix \ref{appendix:a}, into account, under \eqref{ass:boundedness_2}, \eqref{integrab3}, \eqref{integrab4}, the $\bH$-pseudo optimal strategy can be written as
\[
\beta^\H_t= \widetilde \beta^\H_t+\phi^\H_t, \quad t \in [0,T]
\]
where
\begin{align*}
\widetilde \beta^\H_t&=\frac{S_{t^-} \sigma_1(t, S_{t^-}) h_{t^-}(g)+\int_\R z  \ w^g(t, z) \nu_t^{\bH,*} (\ud z) }{S^2_{t^-}\sigma_1^2(t, S_{t^-}) +\int_\R z^2\nu_t^{\bH,*} (\ud z) },\quad t \in [0,T],\\
\phi^\H_t&= \frac{\alpha_t^\H \int_\R z^2  \left( \widetilde \beta^\H_t \ z - w^g(t,z) \right) \nu^\bH_t(\ud z) }{S^2_{t^-}\sigma_1^2(t, S_{t^-}) +\int_\R z^2\nu_t^{\bH} (\ud z) },\quad t \in [0,T].
\end{align*}

Here $h_{t}(g)$ and $w^g(t, z)$ are defined in \eqref{eq:h_jumpdiff} and \eqref{eq:w_jumpdiff} in Appendix, respectively, with the choice $f=g$ and $g(t,x,s)$ is
the solution of \eqref{eq:problema} with $\L^*_{X,S}=\L^3_{X,S}$.

\begin{center}
{\bf Acknowledgements}
\end{center}
The authors are grateful to unknown referees for their useful comments and suggestions which lead to a significant improvement of the paper.
The authors also wish to acknowledge the Gruppo Nazionale per l'Analisi Matematica, la Probabilit\`{a} e le loro
Applicazioni (GNAMPA) of the Istituto Nazionale di Alta Matematica (INdAM) for the financial support.
\bibliographystyle{plain}
\bibliography{RM_biblio}

\appendix
\section{The filtering equation}\label{appendix:a}
We recall that the filter with respect to the MMM $\P^*$ is given by
\begin{equation*}
\pi_t(f) =  \mathbb{E}^{\P^*} [ f(t,X_t,S_t) | \F^S_t ], \quad t \in [0,T]
\end{equation*}
for any measurable function $f(t,x,s)$ such that $\espp{|f(t,X_t,S_t)|}< \infty$, for every $t\in [0,T]$.

Here, we will derive the filter dynamics for the jump-diffusion model and deduce the equations for the continuous model and for the
pure jump one, as particular cases. Hence, using the same notations of Section \ref{sec:jumpdiff_model}, we assume that the dynamics  of the pair signal-observation under $\P^*$ is given by
\begin{equation*}
\left\{
\begin{aligned}
\ud X_t & = \mu_0(t, X_t) \ud t + \sigma_0(t,X_t) \ud W^0_t + \int_Z K_0(\zeta; t, X_{t^-}) \N( \ud t, \ud \zeta),\quad X_0=x \in \R \\
\ud S_t & = S_{t^-}\left\{\sigma_1(t, S_t)\ud W^*_t + \int_Z K_1(\zeta;t,X_{t^-},S_{t^-}) \widetilde{\N}^*(\ud t,\ud \zeta)\right\}, \quad
S_0=s>0.
\end{aligned}
\right.
\end{equation*}
We assume \eqref{integrab3} and \eqref{integrab4}, in addition to \eqref{ass:boundedness_2}, which in particular imply that the processes $X$ and $S$ have finite first moment under $\P^*$.
We recall that the jump part of the process $S$ can be described by the integer-valued random measure $m(\ud t,\ud z)$ defined in \eqref{m}. We denote
by
$\nu_t^{\bF,*}(\ud z) \ud t$ its $(\bF,\P^*)$-predictable dual projection and by $\nu_t^{\bH,*}(\ud z)\ud t$ its $(\bH, \P^*)$-predictable dual
projection and the following relationship holds
\[
\nu_t^{\bH,*}(\ud z)\ud t=\pi_{t^-}(\nu^{\bF,*}(\ud z))\ud t
\]
thanks to~\cite[Proposition 2.2]{c06}.

\begin{remark}\label{rem:martingale_representation_theorem}
An essential tool to derive the filtering equation is represented by the Martingale Representation Theorem (see~\cite[Proposition 2.6]{cco1}). In
particular, it states that every $(\bH, \P^*)$-local martingale admits the following representation
\[
M_t=M_0+\int_0^th_u \ud I^*_t+ \int_0^t \int_{\R} w(u,z) \left(m(\ud u, \ud z )-\nu_u^{\bH,*}(\ud z)\ud t\right),\quad t \in [0,T],
\]
for suitable $\bH$-adapted  and $\bH$-predictable processes $h=\{h_t, \ t \in [0,T]\}$ and $w(\cdot, z)=\{w(t,z), \ t \in [0,T]\}$ for every $z \in
\R$, satisfying
\[
\int_0^T\left(h^2_t+\int_\R |w(t,z)| \nu_t^{\bH,*}(\ud z)\right) \ud t < \infty \quad \P^*-\mbox{a.s.}
\]
where $I^*=\{I^*_t, \ t \in [0,T]\}$ is the $(\bH, \P^*)$-Brownian motion given by
\[
I^*_t=W^*_t+ \int_0^t\left\{\frac{b(u, X_u, S_u)}{\sigma_1(u, S_u)} - \pi_u\left(\frac{b}{\sigma_1}\right)\right\}\ud u,\quad t \in [0,T]
\]
with $\ds b(t, X_t, S_t)= \int_ZK_1(\zeta; t , X_t, S_t) \ \eta_t^*(\ud \zeta)$ for every $t \in [0,T]$.
\end{remark}
The following result provides the dynamics of the filter.
\begin{proposition}[The filtering equation] \label{prop:ks_jumpdiff}
Under \eqref{ass:boundedness_2}, \eqref{integrab3} and \eqref{integrab4} 
the filter solves the Kushner-Stratonovich
equation for every function $f(t,x,s) \in \C_b^{1,2,2}([0,T] \times \mathbb{R} \times \R^+)$, given by
\begin{equation} \label{eq:ks_jumpdiff}
\pi_t (f) = f(0,x_0,s_0) + \int_0^t \pi_s(\L^3_{X,S} f) \ud s + \int_0^t h_s(f)  \ud I^*_s + \int_0^t \int_\R w^f(s,z) (m(\ud s, \ud
z)-\nu_s^{\bH,*}(\ud z)\ud s),
\end{equation}
for every $t \in [0,T]$, where
\begin{equation} \label{eq:h_jumpdiff}
 h_t(f)=  \rho \pi_{t}\left(\sigma_0 \frac{\partial f }{\partial x}\right) + S_t \sigma_1(t,S_t) \pi_t \left(\frac{\partial f }{\partial
 s}\right),
 \end{equation}
\begin{equation}\label{eq:w_jumpdiff}
w^f(t,z)= \frac{\ud \pi_{t^-} (f \nu^{\bF,*})}{\ud \nu^{\bH,*}_t} (z) - \pi_{t^-}(f) +
  \frac{ \ud \pi_{t^-} (\overline{\L} f)}{\ud \nu^{\bH,*}_{t}} (z),
\end{equation}
$\L^3_{X,S}$ is given in  \eqref{generatore-L3} and $\overline{\L}f(t,x,s,\A):=\int_{d^\A(t,x,s)}\{f(t,x+K_0(\zeta;t,x),
s(1+K_1(\zeta;t,x,s)))-f(t,x,s)\}\eta_t^*(\ud \zeta)$,  for every $ \A   \in\mathcal{B}(\mathbb{R})$, where $d^\A(t,x,s)=\{\zeta \in Z:
K_1(\zeta;t,x,s)\in \A\setminus \{0\}\}$.
\end{proposition}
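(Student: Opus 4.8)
The plan is to establish \eqref{eq:ks_jumpdiff} by the classical innovations method, adapting the argument of~\cite{cco1} to the present jump-diffusion signal-observation system with common jumps between $X$ and $S$. The starting point is the semimartingale decomposition of Proposition~\ref{lemma-generatore-L3}: for $f\in\C_b^{1,2,2}([0,T]\times\R\times\R^+)$,
\[
f(t,X_t,S_t)=f(0,x_0,s_0)+\int_0^t\L^3_{X,S}f(r,X_r,S_r)\,\ud r+M^{3,f}_t,
\]
with $M^{3,f}$ the $(\bF,\P^*)$-martingale given in \eqref{eq:M3f}. Assumptions \eqref{ass:boundedness_2}, \eqref{integrab3} and \eqref{integrab4} guarantee $\L^3_{X,S}f(\cdot,X,S)\in L^1(\ud t\otimes\P^*)$ and that all the projections below are well defined.

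First I would project this decomposition onto $\bH=\bF^S$ under $\P^*$. The $\bH$-optional projection of the $(\bF,\P^*)$-martingale $M^{3,f}$ is an $(\bH,\P^*)$-martingale, and a conditional-Fubini argument shows that the $\bH$-optional projection of $\int_0^\cdot\L^3_{X,S}f(r,X_r,S_r)\,\ud r$ equals $\int_0^\cdot\pi_r(\L^3_{X,S}f)\,\ud r$ up to an $(\bH,\P^*)$-martingale. Hence
\[
m^f_t:=\pi_t(f)-\int_0^t\pi_s(\L^3_{X,S}f)\,\ud s,\quad t\in[0,T],
\]
is an $(\bH,\P^*)$-martingale with $m^f_0=f(0,x_0,s_0)$. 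I would then apply the Martingale Representation Theorem recalled in Remark~\ref{rem:martingale_representation_theorem} to $m^f$, obtaining $\bH$-predictable integrands $h(f)$ and $w^f$ with
\[
m^f_t=f(0,x_0,s_0)+\int_0^t h_s(f)\,\ud I^*_s+\int_0^t\int_\R w^f(s,z)\bigl(m(\ud s,\ud z)-\nu^{\bH,*}_s(\ud z)\ud s\bigr).
\]

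It remains to identify the two integrands. For $h(f)$ I would compute the $\bH$-predictable quadratic covariation $\langle m^f,I^*\rangle$: since $\pi_t(f)$ is the $\bH$-optional projection of $f(t,X_t,S_t)$ and the covariation of continuous martingale parts is filtration-independent, $\langle m^f,I^*\rangle$ is obtained by projecting the covariation of the continuous part of $M^{3,f}$ (see \eqref{eq:M3f}) with $W^*$, keeping track of the correlation $\rho$ between $W^0$ and $W^*$; this yields \eqref{eq:h_jumpdiff}. For $w^f$ I would match jumps: since the drift and the $\ud I^*$-term are continuous, $\Delta m^f_t=\Delta\pi_t(f)$ at every jump time of $S$, and comparing with the $(\bH,\P^*)$-predictable dual projection $\nu^{\bH,*}$ of the jump measure of $S$, together with the Bayes-type update of the filter at an observed jump, produces the first two terms $\ud\pi_{t^-}(f\nu^{\bF,*})/\ud\nu^{\bH,*}_t(z)-\pi_{t^-}(f)$ of \eqref{eq:w_jumpdiff}; the remaining term $\ud\pi_{t^-}(\overline{\L}f)/\ud\nu^{\bH,*}_t(z)$ records the simultaneous (unobservable) jump of $X$ occurring at an observed jump of $S$, which is precisely why the operator $\overline{\L}$ enters. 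Throughout one uses the relation $\nu^{\bH,*}_t(\ud z)=\pi_{t^-}(\nu^{\bF,*}(\ud z))$ from~\cite[Proposition 2.2]{c06}.

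The main obstacle I expect is the identification of $w^f$: at a jump time of $S$ one must disentangle the genuinely projected jump of $f(X,S)$, the information update of the conditional law, and the contribution of the simultaneous hidden jump of $X$, and these three effects are entangled in $\Delta\pi_t(f)$; keeping careful track of the various predictable dual projections and of the $\P^*$-null sets on which the Radon--Nikodym densities are defined is the delicate part. A secondary point is the rigorous justification of the interchange of conditional expectation and time integration and of the optional-projection-of-a-martingale statement, for which the integrability hypotheses \eqref{ass:boundedness_2}, \eqref{integrab3} and \eqref{integrab4} are exactly what is required. Once the jump-diffusion case is settled, the equations for the diffusion and the pure jump models follow by specialising the coefficients.
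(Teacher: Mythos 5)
Your overall architecture coincides with the paper's: project the semimartingale decomposition of $f(t,X_t,S_t)$ from Proposition \ref{lemma-generatore-L3} onto $\bH$ under $\P^*$, observe that $m^f_t:=\pi_t(f)-\int_0^t\pi_s(\L^3_{X,S}f)\,\ud s$ is an $(\bH,\P^*)$-martingale, invoke the representation result recalled in Remark \ref{rem:martingale_representation_theorem}, and then identify the two integrands; for $w^f$ you, like the paper, essentially defer to the argument of Theorem 3.2 in~\cite{cco1}, so that part is at the same level of detail as the published proof.

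The genuine gap is in your identification of $h(f)$. You claim that, since quadratic covariation is filtration-independent, $\langle m^f,I^*\rangle$ is "the projection of the covariation of the continuous part of $M^{3,f}$ with $W^*$". But $m^f$ is not $M^{3,f}$ viewed in a smaller filtration — it is built from the optional projection $\pi(f)$, a genuinely different process — and, more importantly, $I^*$ is not $W^*$: by Remark \ref{rem:martingale_representation_theorem} they differ by the drift $\int_0^\cdot\bigl\{b/\sigma_1-\pi(b/\sigma_1)\bigr\}\ud u$, where $b(t,X_t,S_t)=\int_Z K_1(\zeta;t,X_t,S_t)\,\eta^*_t(\ud\zeta)$ depends on the unobservable $X$ because the $(\bF,\P^*)$-compensator of the jumps of $S$ does. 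Hence "filtration-independence of the bracket" does not yield $\langle m^f,I^*\rangle$; the identification requires comparing the finite-variation parts of ${}^{o,*}(Z\widetilde W^*)$ and of ${}^{o,*}Z\cdot\widetilde W^*$ for the $\bH$-adapted auxiliary process $\widetilde W^*=I^*+\int\pi(b)/\sigma_1\,\ud u$, via It\^o's product rule and a localization sequence — which is exactly how the paper proceeds — and in that computation one must keep track of the drift contributions of the form $\pi(fb)/\sigma_1$ and $\pi(f)\pi(b)/\sigma_1$ generated by the $b$-correction in the innovation. This is not a cosmetic point: the very same mechanism is what produces the covariance term $\frac{\widetilde\pi(\mu_1 f)-\widetilde\pi(\mu_1)\widetilde\pi(f)}{\sigma_1}$ in the $\P$-filtering equation \eqref{eq:ks_P}, so an argument that never sees these terms cannot decide between \eqref{eq:h_jumpdiff} and a formula carrying an analogous correction in $b$; your proposal silently assumes such contributions are absent instead of confronting them in the product computation, and that is the step you would have to supply to make the proof complete.
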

\begin{proof}
We consider the semimartingale $Z=\{Z_t=f(t,X_t,S_t),\ t \in [0,T]\}$ whose decomposition is given by
\begin{equation}\label{zt}
 Z_t=f(0,X_0,S_0)+\int_0^t \L^3_{X,S}f(u,X_u,S_u)\ud u+M^{3,f}_t,\quad t \in [0,T],
\end{equation}
where $\L^3_{X,S}$ and $M^{3,f}$ are defined in \eqref{generatore-L3} and \eqref{eq:M3f}, respectively.
By taking the conditional expectation with respect to $\H_t$ in \eqref{zt},  we get
\begin{equation*}
\pi_t(f)=\pi_0(f)+\int_0^t\pi_u(\L^3_{X,S}f)\ud u+\widetilde{M}^{f}_t,\quad t \in [0,T],
\end{equation*}
where $\ds \widetilde{M}^f_t={}^{o,*} M_t^{3,f}+{}^{o,*}\left(\int_0^t \L^3_{X,S} f (u,X_u,S_u) \ud u\right)-\int {}^{o,*}(\L^3_{X,S} f(u,X_u,S_u))\ud
u $ for every $t \in [0,T]$, and ${}^{o,*} Y$ is the $(\bH, \P^*)$-optional projection of a given process $Y$,  and $\widetilde{M}^f$ is an
$(\bH,\P^*)$-martingale. Thanks to Remark \ref{rem:martingale_representation_theorem}, there exist an $\bH$-adapted process $h(f)$ and an
$\bH$-predictable process $w^f$ such that
$$
 \espp{\int_0^T \left(h_s^2(f)+\int_{\R}|w^f(s,z)|\nu_s^{\bH,*}(\ud z)\right) \ud s} < \infty
$$
 and
\begin{equation*}
\widetilde{M}^f_t=\pi_t(f)-\pi_0(f)-\int_0^t \pi_u(\L^3_{X,S} f)\ud u= \int_0^t h_u(f)\ud I^*_u+\int_0^t\int_{\R}w^f(u,z)\left(m(\ud u, \ud
z)-\nu_u^{\bH,*}(\ud z)\ud u\right).
\end{equation*}
To identify the process $h(f)$ we define the process $\widetilde W^*=\{\widetilde W^*_t, \ t \in [0,T]\}$ by
\[
\widetilde W^*_t:=I^*_t+ \int_0^t\frac{\pi_u(b)}{\sigma_1(u, S_u)} \ud u,\quad t \in [0,T].
\]
Then we compute ${}^{o,*}\left( Z \widetilde{W}^*\right)$ and $ {}^{o,*}Z\widetilde{W}^*$ separately and since $\widetilde{W}^*$ is
$\bH$-adapted, the equality $ {}^{o,*}\left( Z \widetilde{W}^*\right)={}^{o,*}Z\widetilde{W}^*$ holds.
By It\^o's product rule, we have
\begin{equation*}
\ud (Z_t  \widetilde{W}^*_t) = Z_t \ud \widetilde{W}^*_t+ \widetilde{W}^*_t \ \L^3_{X,S}f(t,X_t,S_t)\ud t +\frac{\partial f}{\partial x} \sigma_0(t,X_t) \rho \ud t +
\frac{\partial f}{\partial s} S_t \sigma_1(t,S_t) \ \ud t+\ud M^1_t,
\end{equation*}
where $\displaystyle M^1 := \int  \widetilde{W}^*_s \ud M^{3, f}_s$ is an $( \bF, \P^*)$-local martingale. We now introduce an $ \bH$-localizing sequence for
$M^1$:
\[
\widetilde{\tau}_n = T\wedge \inf\left\{t:  | \widetilde{W}^*_t| \geq n\right\}, \quad n\geq 1.
\]
If we take the conditional expectation with respect to $\H_t$, on $\{t \leq \widetilde{\tau}_n\}$ we get
\begin{eqnarray*}
\ud{}^{o,*} (Z_t \widetilde{W}^*_t)&=& {}^{o,*}\left(\widetilde{W}^*_t \ \L^3_{X,S}f(t,X_t,S_t) +\frac{\partial f}{\partial x}(t) \sigma_0(t,X_t) \rho +
\frac{\partial f}{\partial s} S_t \sigma_1(t,S_t) \right) \ud t +  \ud \widetilde M^1_t,
 \end{eqnarray*}
where $\widetilde M^1$ is  an $(\bH, \P^*)$-local martingale.
On the other hand
\begin{equation*}
\ud ({}^{o,*} Z_t \widetilde{W}^*_t)=\left(\widetilde{W}^*_t \ {}^{o,*}\L^3_{X,S}f(t,X_t,S_t)+h_t(f)\right)\ud t+\ud M^2_t,
\end{equation*}
where $ M^2$ is an $(\bH, \P^*)$-local martingale.
By the equality ${}^{o,*}\left( Z\widetilde{W}^*\right)={}^{o,*}Z\widetilde{W}^*$, the bounded variation terms must be equal, which means that
\begin{equation*}
 h_t(f)=  \rho  \pi_{t}\left(\sigma_0  \frac{\partial f }{\partial x}\right) + S_t \sigma_1(t,S_t)\pi_t \left(\frac{\partial f }{\partial
 s}\right)
\end{equation*}
on $\{t \leq  \widetilde{\tau}_n\}$.
Now,  when $n \to \infty$, $\widetilde{\tau}_n$ goes to $T$ $\P$-a.s. and so the process $h_t(f)$ is completely defined for every $t\in [0,T]$.
Following the same arguments of the proof of Theorem 3.2 in~\cite{cco1} we obtain the expression of $w^f(t,z)$.
\end{proof}

\begin{remark}
Strong uniqueness for the solution of the filtering equation is analyzed in~\cite{cco1} and~\cite{cco2} for the pair signal-observation given by the
system \eqref{eq:sistema3}. These results can be applied to deduce suitable conditions which ensure strong uniqueness of the solution to the filtering
equation \eqref{eq:ks_jumpdiff} under the MMM $\P^*$. In~\cite{cco2} the authors analyzed strong uniqueness for the Zakai equation solved by the
unnormalized version of the filter, and the relation with pathwise uniqueness for the Kushner-Stratonovich equation. In particular, whenever the signal
process $X$ is a pure jump process taking values in a countable space, the Zakai equation can be solved recursively (see Section 5.3 in~\cite{cco2}
and~\cite{cg2000}) and pathwise uniqueness holds under the hypothesis that $X$ takes values in a finite space or when $X$ and $S$ have only common jump
times.
\end{remark}

\begin{remark}[The diffusion market model]\label{rem:ks_continuo}
If the dynamics of the pair $(X,S)$ is given by the system \eqref{eq:sistema1b}, we observe that
\[
W^*_t=W^1_t+\int_0^t\frac{\mu_1(u, X_u, S_u)}{\sigma_1(u , S_u)} \ud u = \widetilde W_t, \quad t \in [0,T],
\]
then $I^*_t=I_t$, with $I_t$ given in \eqref{eq:innovation}, for every $t \in [0,T]$.
Therefore under \eqref{ass:boundedness_1} and \eqref{hp:generatore_cont},
the dynamics of the filter becomes
\begin{equation}\label{eq:ks_continuo}
\pi_t (f) = f(0,x_0,s_0) + \int_0^t \pi_s(\L^1_{X,S} f) \ud s + \int_0^t h_s(f)  \ud I_s   \end{equation}
for every $t \in [0,T]$ and for every function $f \in \C^{1,2,2}([0,T]\times \R \times \R^+)$, where
\begin{equation}\label{eq:h_continuo}
 h_t(f)=  \rho \pi_{t}\left(\sigma_0 \frac{\partial f }{\partial x}\right) + S_t \sigma_1(t,S_t) \pi_t \left(\frac{\partial f }{\partial
 s}\right), \quad t \in [0,T]. \end{equation}
\end{remark}

\begin{remark}[The pure jump market model]\label{rem:ks_jump}
For the pure jump market model described by the system \eqref{eq:sistema2b}, under \eqref{ass:boundedness_1b}, \eqref{integrab} and \eqref{hp:L2} the filter dynamics is given by
\begin{equation} \label{eq:ks_jump}
\pi_t (f) = f(0,x_0,s_0) + \int_0^t \pi_s(\L^2_{X,S} f) \ud s + \int_0^t \int_\R w^f(s,z) (m(\ud s, \ud z)-\nu^{\bH,*}_s(\ud z)\ud s), \end{equation}
where
\begin{equation*}\label{eq:w_jump}
w^f(t,z)= \frac{\ud \pi_{t^-} (f \nu^{\bF,*})}{\ud \nu^{\bH,*}_t} (z) - \pi_{t^-}(f) +
  \frac{ \ud \pi_{t^-} (\overline{\L} f)}{\ud \nu^{\bH,*}_{t}} (z).
\end{equation*}
In ~\cite{cg06} an explicit representation of the filter is obtained by the Feynman-Kac formula using a linearization method. This representation allows one to provide a recursive algorithm for the computation of the filter.
\end{remark}

\subsection{The filtering equation under the real-world probability measure}

As pointed out in Section \ref{sec:jump_model}, to derive the $\bH$-pseudo optimal strategy we also need to compute $\nu_t^\bH(\ud z)\ud t$ which is
the $(\bH, \P)$-predictable dual projection of the integer valued random measure $m(\ud t, \ud z)$. We observed that $\nu_t^\bH(\ud z)\ud t$ has a
representation in terms of $\widetilde \pi$ which is the filter under the real-world probability measure $\P$, given by $\nu_t^\bH(\ud z)= \widetilde
\pi_{t^-}(\nu^\bF(\ud z))$.

Under \eqref{ass:boundedness_2} and \eqref{integrab3}, \eqref{integrab4},
formulated under $\P$, by extending the results in~\cite{cco1}, the filter $ \widetilde \pi$ solves the following Kushner-Stratonovich equation
\begin{equation} \label{eq:ks_P}
\widetilde \pi_t (f) = f(0,x_0,S_0) + \int_0^t \widetilde \pi_s(\L^{X,S} f) \ud s +
\int_0^t\int_{\mathbb{R}}  \widetilde w^f(s,z) ( m(\ud s, \ud z)  -  \widetilde \pi_{s^-}(\nu^\bF(\ud z)))+ \int_0^t  \widetilde  h_s(f)  \ud I_s
\end{equation}
for every function $f \in \mathcal{C}^{1,2}([0,T] \times \R \times \R^+)$ and for every $t\in [0,T]$, where
\begin{equation*} \label{d1}
  \widetilde w^f(t,z)= \frac{\ud \widetilde \pi_{t^-} (\nu^\bF f) }{  \ud\widetilde \pi_{t^-} (\nu^\bF )} (z) - \widetilde \pi_{t^-}(f) +
  \frac{ \ud \widetilde\pi_{t^-} (\widetilde{L} f)}{  \ud \widetilde \pi_{t^-} \left(\nu^\bF\right)} (z), \quad t \in [0,T] \end{equation*}

 \begin{equation*} \label{d2}
  \widetilde h_t(f)=  \frac{\widetilde \pi_{t}( \mu_1 f) -\widetilde  \pi_{t}(\mu_1) \widetilde \pi_{t}(f)}{\sigma_1(t, S_t)} + \rho \widetilde
  \pi_{t}\left(\sigma_0  \frac{\partial f}{\partial x}\right) + S_t \sigma_1(t,S_t) \widetilde \pi_t \left(\frac{\partial f }{\partial s}\right), \quad t \in [0,T],
  \end{equation*}
  and $I$ is the innovation process defined by \eqref{eq:innovation}. The operator  $\L^{X,S}$ denotes the generator of $(X,S)$ under $\P$, which is given by
 \[
 \begin{aligned}[t]
 &\L^{X,S}f(t,x,s)= \frac{\partial f}{\partial t}+\frac{\partial f}{\partial x} \mu_0(t,x)+\frac{\partial f}{\partial
 s} s \mu_1(t,x,s) +\frac{1}{2} \frac{\partial^2 f}{\partial x^2} \sigma_0^2(t,x) + \frac{1}{2}\frac{\partial^2 f}{\partial s^2} s^2
 \sigma_1^2(t,s)\\
 &\quad +\frac{\partial^2 f}{\partial x \partial s}  s \sigma_0(t,x) \sigma_1(t,s) \rho + \int_Z \Delta f(\zeta;t,x,s) \eta(\ud \zeta)
 \end{aligned}
 \]
with $\ds \Delta f (\zeta; t,x,s)= f \big (t,x+K_0(\zeta;t,x), s( 1 +K_1(\zeta; t,x, s)) \big)-f(t,x,s)$,  and  for every $ \A
\in\mathcal{B}(\mathbb{R})$, the operator $\widetilde{\L}$, defined by
\begin{equation*}\label{operatoreL}
\widetilde{\L} f(t,x,s,\A) :=  \int_{d^\A(t,x,s)}\Delta f (\zeta; t,x,s)\eta(\ud \zeta),
\end{equation*}
where $d^\A(t,x,s)=\{\zeta \in Z: K_1(\zeta;t,x,s)\in \A\setminus\{0\}\}$,  takes common jump times between the signal $X$ and the observation $S$ into
account.

\begin{remark}[The pure jump market model]
Clearly, we can deduce the filtering equation for the pure jump model as a particular case of equation \eqref{eq:ks_P}, where now we have $\widetilde
h(f)=0$ and
$$ \L^{X,S}f(t,x,s)= \frac{\partial f}{\partial t}+\frac{\partial f}{\partial x} \mu_0(t,x)
+\frac{1}{2} \frac{\partial^2 f}{\partial x^2} \sigma_0^2(t,x) + \int_Z \Delta f(\zeta;t,x,s)  \eta(\ud \zeta).
$$
\end{remark}

\section{Some proofs}\label{appendix:b}

\begin{proof}[Proof of Lemma \ref{lemma:MMM}]

We denote by $(B^\bF, C^\bF, \nu^\bF)$ the $(\bF, \P)$-predictable characteristics of $S$ (see \cite{js} for more details) and by $(B^\bH, C^\bH, \nu^\bH)$ the $(\bH, \P)$-predictable characteristics of $S$.

Assume now that $S$ has continuous trajectories, then $\nu^\bF=\nu^\bH=0$. Then the $(\bF, \P)$-predictable characteristics of $S$ are given by

\[
B^\bF_t=\int_0^t\alpha^\F_u \ud \langle M \rangle_u \qquad C^\bF_t:=\langle S \rangle_t=\langle M \rangle_t, \quad t \in [0,T],
\]
and the $(\bH, \P)$-predictable characteristics of $S$ are

\[
B^\bH_t=\int_0^t\alpha^\H_u \ud \langle N \rangle_u \qquad C^\bH_t:=\langle S \rangle_t=\langle N \rangle_t, \quad t \in [0,T].
\]

We also recall that in the continuous trajectories case we also get that $\alpha^\H={}^p(\alpha^\F)$, and $\langle S\rangle=\langle M\rangle=\langle N\rangle$.

This means that the $(\bH,\P)$-predictable characteristics of $S$ can also be written as
\[
B^\bH_t=\int_0^t{}^p\alpha^\F_u \ud \langle M \rangle_u \qquad C^\bH_t:=\langle S \rangle_t=\langle M \rangle_t.
\]

Using the definition of $S$ we get that
\[
S_t-S_0=M_t+\int_0^t\alpha^\F_u \ud \langle M \rangle_u=N_t+\int_0^t\alpha^\H_u \ud \langle N \rangle_u,\quad t \in [0,T].
\]

Hence, by the Girsanov theorem we get that $S$ has $(\bF, \P^*)$-predictable characteristics $(0, \langle M \rangle, 0)$ and since $\langle M \rangle =\langle N \rangle$, this are also the $(\bH, \P^*)$-predictable characteristics of $S$.

Again, by the Girsanov theorem $S$ has $(\bH, \P^0)$-predictable characteristics given by $(0, \langle N \rangle, 0 )$.

Therefore since $(\bH, \P^*)$-predictable characteristics of $S$ coincide with its $(\bH, \P^0)$-predictable characteristics and $\P^0|_{\H_0}=\P^*|_{\H_0}$, by \cite[Chapter 3, Corollary 4.31]{js} we can conclude that $\P^0$ is the restriction of $\P^*$ over $\bH$.
\end{proof}

\begin{proof}[Proof of Proposition \ref{lemma-generatore-Lcont}]
Observe that the change of probability measure $\ds \left.\frac{\ud \P^*}{\ud \P}\right|_{\F_T}$ is Markovian since $\alpha^\F_t=\alpha^\F(t, X_{t^-},
S_{t^-})$, for each $t \in [0,T]$ (see~\cite[Proposition 3.4]{cg09}). Then the pair $(X,S)$ is still an $(\bF,\P^*)$-Markov process. To compute the
generator $\L^*_{X,S}$, we apply It\^{o}'s formula to the function $f(t,X_t,S_t)$, and we get
$$
f(t, X_t, S_t)= f(0, x_0, s_0) + \int_0^t \L^1_{X,S}f (r,X_r,S_r) \ud r + M^{1,f}_t,
$$
 where $\ds \L^1_{X,S}$ is the operator given in \eqref{generatore-L1} and $M^{1,f}$ is the process given by \eqref{eq:M1f}.
Moreover, under \eqref{hp:generatore_cont} the process $M^{1,f}$ is an $(\bF, \P^*)$-martingale; indeed
\[
\bE^{\P^*}\left[\int_0^T \!\!\! \sigma^2_0(t,X_t) \left( \frac{\partial f}{\partial x} \right)^2\ud t\right] < \infty,  \qquad \bE^{\P^*}\left[
\int_0^T \!\!\!  \sigma_1^2(t,S_t) S^2_t \left( \frac{\partial f}{\partial s} \right)^2 \ud t\right] < \infty.
\]
\end{proof}

\begin{proof}[Proof of Proposition \ref{lemma-generatore-L2}]
By the same argument used in the proof of Proposition \ref{lemma-generatore-Lcont}, we get that
\[
f(t, X_t, S_t)= f(0, x_0, s_0) + \int_0^t \L^2_{X,S}f(r,X_r,S_r) \ud r + M^{2,f}_t,
\]
where $\ds \L^2_{X,S}$ is the operator in \eqref{generatore-L2} and $M^{2,f}$ is given by \eqref{eq:Mf2}.
Note that under conditions \eqref{integrab} and \eqref{hp:L2}, the process $M^{2,f}$ is an $(\bF,\P^*)$-martingale; indeed
\[
\bE^{\P^*}\left[\int_0^T \!\!\! \sigma^2_0(t,X_t) \left( \frac{\partial f}{\partial x} \right)^2\ud t\right] < \infty,
\]
and
$$
\bE^{\P^*}\left[\int_0^T \int_Z |\Delta f(\zeta;t,X_{t^-},S_{t^-})| \eta_t^*(\ud \zeta)\ud t\right]
\leq 2 \|f\|\bE^{\P^*}\left[\int_0^T \{\eta_t^*(D^0_t) + \eta_t^*(D_t)\} \ud t\right]  < \infty,
$$
where $\|f\|=\sup\{f(t,x,s)|(t,x,s) \in  \R^+ \times \R \times \R^+\}$.
\end{proof}

\begin{proof}[Proof of Proposition \ref{lemma-generatore-L3}]
Analogously to the proof of Proposition \ref{lemma-generatore-Lcont}, we get that
\[
f(t, X_t, S_t)= f(0, x_0, s_0) + \int_0^t \L^3_{X,S}f (r,X_r,S_r)\ud r + M^{3,f}_t,
\]
where $\ds \L^3_{X,S}$ is the operator in \eqref{generatore-L3} and $M^{3,f}$ is given in \eqref{eq:M3f}.
Moreover, under conditions \eqref{integrab3}, \eqref{integrab4}, 
the process $M^{3,f}$ is an $(\bF, \P^*)$-martingale; indeed
\[
\bE^{\P^*}\left[\int_0^T \!\!\! \sigma^2_0(t,X_t) \left( \frac{\partial f}{\partial x} \right)^2\ud t\right] < \infty,  \qquad \bE^{\P^*}\left[
\int_0^T \!\!\!  \sigma_1^2(t,S_t) S^2_t \left( \frac{\partial f}{\partial s} \right)^2 \ud t\right] < \infty
\]
and
$$
\bE^{\P^*}\left[\int_0^T \int_Z |\Delta f(\zeta;t,X_{t^-},S_{t^-})| \eta_t^*(\ud \zeta)\;\ud t\right]
\leq 2 \|f\| \bE^{\P^*}\left[\int_0^T \{ \eta_t^*(D^0_t) + \eta_t^*(D_t)\} \ud t\right]  < \infty .
$$
\end{proof}

\end{document}